\documentclass[leqno,10pt]{amsart}
\usepackage{amssymb,amsmath,amsfonts,amsthm,mathtools,latexsym,ifthen,bezier}\usepackage{graphicx}
\usepackage{amsxtra}
\usepackage{xspace}
\usepackage{url}
\usepackage[english]{babel} 

\usepackage{etoolbox}

\newcommand{\kla}[1]{ {\langle #1 \rangle} }

\newcommand{\st}{\;|\;}

\newcommand{\ran}{ {\rm ran} }

\newcommand{\sub}{\subseteq}

\newfont{\ssi}{cmssi12 at 12pt}

\newcommand{\rest}{{\restriction}}
\newcommand{\cf}{ {\rm cf} }

\newcommand{\verl}{{{}^\frown}}
\newcommand{\leer}{\emptyset}
\newcommand{\ohne}{\setminus}

\newcommand{\id}{ {\rm id} }

\newenvironment{ea*}{\begin{eqnarray*}}{\end{eqnarray*}}

\newcounter{claimnumber}
\AtBeginEnvironment{thm}{\setcounter{claimnumber}{0}}
\AtBeginEnvironment{lem}{\setcounter{claimnumber}{0}}
\AtBeginEnvironment{cor}{\setcounter{claimnumber}{0}}
\AtBeginEnvironment{obs}{\setcounter{claimnumber}{0}}
\AtBeginEnvironment{fact}{\setcounter{claimnumber}{0}}
\newenvironment{NumberedClaim}%
{\begin{enumerate}[label=(\arabic*)]
\refstepcounter{claimnumber}
\setcounter{enumi}{\value{claimnumber}-1}
\item}%
{\end{enumerate}}

\setbox0=\hbox{$\longrightarrow$}
\newcommand{\To}{\longrightarrow}

\newcommand{\power}{{\mathcal{P}}}

\newcommand{\calT}{\mathcal{T}}

\newcommand{\ba}{{\bar{a}}}

\newcommand{\bA}{{\bar{A}}}

\newcommand{\bD}{{\bar{D}}}

\newcommand{\bG}{{\bar{G}}}
\newcommand{\bH}{{\bar{H}}}

\newcommand{\bM}{{\bar{M}}}
\newcommand{\bP}{{\bar{\P}}}

\newcommand{\bS}{{\bar{S}}}
\newcommand{\bT}{{\bar{T}}}

\newcommand{\bX}{{\bar{X}}}

\newcommand{\balpha}{{\bar{\alpha}}}

\newcommand{\bkappa}{{\bar{\kappa}}}

\newcommand{\blambda}{{\bar{\lambda}}}

\newcommand{\bp}{{\bar{p}}}

\newcommand{\bN}{{\bar{N}}}

\newcommand{\vx}{{\vec{x}}}
\newcommand{\vy}{{\vec{y}}}

\newcommand{\vD}{{\vec{D}}}

\newcommand{\vM}{{\vec{M}}}

\newcommand{\vS}{{\vec{S}}}
\newcommand{\vT}{{\vec{T}}}

\newcommand{\seq}[2]{{\langle#1\;|\;}\linebreak[0]{#2\rangle}}

\renewcommand{\phi}{\varphi}
\newcommand{\card}[1]{\overline{\overline{#1}}}

\newcommand{\ZFC}{\ensuremath{\mathsf{ZFC}}\xspace}

\newcommand{\ZFCm}{\ensuremath{{\ZFC}^-}\xspace}

\newcommand{\V}{\ensuremath{\mathrm{V}}\xspace}

\newcommand{\forces}{\Vdash}

\def\<#1>{\langle#1\rangle}
\newcommand{\restrict}{\upharpoonright}

\renewcommand{\P}{{\mathord{\mathbb P}}}
\newcommand{\Q}{{\mathord{\mathbb Q}}}
\newcommand{\R}{{\mathord{\mathbb R}}}

\newcommand{\SC}{\ensuremath{\mathsf{SC}}\xspace}

\newcommand{\MA}{\ensuremath{\mathsf{MA}}}
\newcommand{\MM}{\ensuremath{\mathsf{MM}}\xspace}

\newcommand{\SPFA}{\ensuremath{\mathsf{SPFA}}\xspace}

\newcommand{\FA}{\ensuremath{\mathsf{FA}}}
\newcommand{\SCFA}{\ensuremath{\mathsf{SCFA}}\xspace}

\newcommand{\MP}{\ensuremath{\mathsf{MP}}}

\newcommand{\ColNothing}{\mathrm{Col}}
\newcommand{\Col}[1]{\ColNothing(#1)}

\newcommand{\MPColNothing}[1]{\MP_{\Col{\dot{\kappa}}}}

\newcommand{\isomorphic}{\cong}

\newcommand{\CH}{\ensuremath{\mathsf{CH}}\xspace}

\newcommand{\reals}{{\mathord{\mathbb{R}}}}

\newcommand{\A}{\mathcal{A}}

\newcommand{\Add}{\mathord{\mathrm{Add}}}

\newcommand{\lift}{\mathop{\uparrow}}

\newcommand{\NS}{\mathsf{NS}}

\newcommand{\Refl}{\ensuremath{\mathsf{Refl}}}

\newcommand{\OSR}[1]{\ensuremath{\mathsf{OSR}_{#1}}}
\newcommand{\DSR}[1]{\ensuremath{\mathsf{DSR}{(#1)}}}
\newcommand{\SRP}{\ensuremath{\mathsf{SRP}}\xspace}

\newcommand{\Todorcevic}{Todor\v{c}evi\'{c}\xspace}

\newtheorem{thm}{Theorem}[section]
\newtheorem*{thm*}{Theorem} 
\newtheorem{cor}[thm]{Corollary}
\newtheorem{lem}[thm]{Lemma}
\newtheorem{obs}[thm]{Observation}

\newtheorem{fact}[thm]{Fact}

\theoremstyle{definition}
\newtheorem{defn}[thm]{Definition}
\newtheorem{question}[thm]{Question}

\newtheorem{notation}[thm]{Notation}

\newcommand{\thistheoremname}{}
\newtheorem{genericthm}[thm]{\thistheoremname}

\theoremstyle{remark}
\newtheorem{remark}[thm]{Remark}

\usepackage{enumitem}
\renewcommand{\A}{\mathbb{A}}
\renewcommand{\card}[1]{\mathsf{card}(#1)}

\newcommand{\U}{\mathbb{U}}
\newcommand{\T}{\mathbb{T}}
\newcommand{\infSC}{\ensuremath{\infty\text{-}\mathsf{SC}}\xspace}

\newcommand{\infSCFA}{\ensuremath{\infty\text{-}\mathsf{SCFA}}\xspace}
\renewcommand{\DSR}{\mathsf{DSR}}
\renewcommand{\OSR}{\ensuremath{\mathsf{OSR}}\xspace}
\newcommand{\uDSR}{\mathsf{uDSR}}
\newcommand{\lifting}{\mathsf{lifting}}

\newcommand{\sDSR}{\ensuremath{\mathsf{sDSR}}}
\newcommand{\refl}{\ensuremath{\mathsf{Refl}}}
\newcommand{\projectdown}{\mathbin{\downarrow}}
\newcommand{\projectup}{\mathbin{\uparrow}}
\newcommand{\calS}{\mathcal{S}}

\newcommand{\calN}{\mathcal{N}}
\newcommand{\bcalN}{\bar{\mathcal{N}}}
\newcommand{\calL}{\mathcal{L}}

\newcommand{\bb}[1]{\mathbb{#1}}

\newcommand{\SSP}{\ensuremath{\mathsf{SSP}}}

\def\undertilde#1{\mathord{\vtop{\ialign{##\crcr
$\hfil\displaystyle{#1}\hfil$\crcr\noalign{\kern1.5pt\nointerlineskip}
$\hfil\tilde{}\hfil$\crcr\noalign{\kern1.5pt}}}}}

\begin{document}
\title{Fragments of \SRP, the nonstationary ideal, and diagonal reflection}
\author[Fuchs]{Gunter Fuchs}
\address[G.~Fuchs]{Mathematics Program,
          The Graduate Center of The City University of New York,
          365 Fifth Avenue, New York, NY 10016
          \&
          Department of Mathematics,
          College of Staten Island of CUNY, 2800 Victory Boulevard,
          Staten Island, NY 10314}
\email{Gunter.Fuchs@csi.cuny.edu}
\urladdr{http://www.math.csi.cuny/edu/\textasciitilde{}fuchs}

\author[Sakai]{Hiroshi Sakai}
\address[H.~Sakai]{The Graduate School of Mathematical Sciences,
	3-8-1 Komaba Meguro-ku, Tokyo 153-8914, Japan}
\email{hsakai@ms.u-tokyo.ac.jp}

\subjclass[2020]{03E35, 03E50, 03E57}
\keywords{Forcing axioms, strong reflection principle, subcomplete forcing, saturation of the nonstationary ideal}

\date{\today}

\thanks{The first author was supported by PSC-CUNY award \# 68379-00 56 and Simons collaboration grant 580600. The second author is supported by JSPS KAKENHI Grant Number 24K06828. \\
\indent This manuscript was submitted to the Journal of Symbolic Logic.}

\begin{abstract}
We investigate the relationships between fragments of \SRP and diagonal reflection principles. It turns out that the subcomplete fragment of \SRP, augmented with \CH, implies that the nonstationary ideal on $\omega_1$ is not saturated, but that Larson's diagonal reflection principle $\OSR_{\omega_2}$ holds. In contrast, it was known that the full principle \SRP implies the saturation of the nonstationary ideal on $\omega_1$, but it does not imply $\OSR_{\omega_2}$. We find several instances of this phenomenon: pairing (variations of) the subcomplete fragment of \SRP with stronger failures of $\NS_{\omega_1}$ to be saturated allows us to conclude more instances of diagonal reflection principles, while boosting (the corresponding variations of) the full principle \SRP with strong forms of the saturation of $\NS_{\omega_1}$ allows us to produce models where these principles still hold, yet weaker versions of diagonal reflection fail.
\end{abstract}

\maketitle

\section{Introduction}

Soon after the introduction of Martin's Maximum \MM, the strong reflection principle \SRP was introduced as a purely combinatorial principle that encapsulates many of the consequences of \MM. One such consequence of \MM is that $\NS_{\omega_1}$, the nonstationary ideal on $\omega_1$,  is $\omega_2$-saturated. Isolating consequences of \MM that do not follow from \SRP became a focal question, and many of these concern diagonal stationary reflection principles. In this direction, Larson showed that one such consequence is the principle $\OSR_{\omega_2}$: it follows from \MM but not from \SRP.

If we weaken \SRP to its subcomplete fragment, denoted $\infSC$-\SRP, then the resulting principle is consistent with the nonstationary ideal on $\omega_1$ failing to be saturated, \CH, and even $\diamondsuit$. 

In this article, we investigate the effects of (variations of) $\infSC$-\SRP, together with more or less strong failures of $\NS_{\omega_1}$ being $\omega_2$-saturated, on the extent of (various forms) of diagonal stationary reflection principles. 

The initial result in this direction is Theorem \ref{thm:SC-SRP+CHimpliesOSR}, stating that the subcomplete fragment of \SRP, together with \CH, implies Larson's $\OSR_{\omega_2}$. Here, adding \CH to $\infSC$-\SRP is a natural thing to do, as is explained in great detail in \cite{Fuchs:Errata}, but also, in the context of $\infSC$-\SRP, \CH implies that $\NS_{\omega_1}$ is not $\omega_2$-saturated, as we point out in Observation \ref{obs:Woodin}. This result is surprising, because we weaken the assumption of \SRP to just its subcomplete fragment, and we add an assumption which appears to weaken it even further by taking away the seemingly strong consequence of the saturatedness of $\NS_{\omega_1}$, but as a consequence, we obtain a principle of diagonal stationary reflection that is out of reach for \SRP.   

Exploring this phenomenon further, we consider a natural forcing axiom type strengthening of this fragment of \SRP which we call \emph{enhanced} $\infSC$-$\SRP^\FA$, and add a stronger failure of the saturatedness of $\NS_{\omega_1}$, namely Jensen's $\diamondsuit$ principle, to obtain in Theorem \ref{thm:AddingDiamondGivesDiagonalReflection} as a consequence a generalization of $\OSR_{\omega_2}$ to all larger regular cardinals greater than $\omega_1$.

We consider the results mentioned up to now to be positive results, in the sense that they state that certain versions of \SRP, together with appropriate assumptions on the nonsaturatedness of $\NS_{\omega_1}$ \emph{do imply} certain types of diagonal reflection. In Section \ref{sec:Separations}, we deal with negative results, i.e., results showing that certain assumptions of the previous kind do \emph{not imply} various diagonal stationary reflection principles. 
We begin by generalizing methods of Larson \cite{Larson:SeparatingSRP} and \cite{Fuchs-LambieHanson:SeparatingDSR} to separate ``forcing axiom versions'' of \SRP from diagonal stationary reflection principles, first at $\omega_2$, then at larger regular cardinals, and finally we prove a result showing that the enhanced forcing axiom version of \SRP does not imply a weak diagonal reflection principle at uncountable $\lambda>\omega_2$ (the point being that the enhanced forcing axiom version of the subcomplete fragment of \SRP, together with $\diamondsuit$, implies even stronger forms of diagonal reflection at all regular cardinals greater than $\omega_1$.)  
Moving in the opposite direction, in Subsection \ref{subsec:NSomega1-dense}, we show that if we enhance \SRP with the $\omega_1$-density of the nonstationary ideal on $\omega_1$, i.e., \emph{strengthening} its saturation properties, then an even weaker principle than $\OSR_{\omega_2}$ can be forced to fail, while \SRP continues to hold. This result was known by work of Fuchs \& Lambie-Hanson \cite{Fuchs-LambieHanson:SeparatingDSR} - however, back then, the assumption was not known to be consistent. We observe here that it is, and we fix a slight imprecision in the proof. Moreover, we observe that these separation results extend to a natural forcing axiom version of \SRP; see Theorem \ref{thm:INSdenseuDSRfailsAtOmega2} for the exact formulation of the result.

Thus, the picture that emerges is that in the context of fragments of \SRP, failures of strong saturation properties of the nonstationary ideal on $\omega_1$, apparently forms of weakness, actually boost the available diagonal reflection principles.

\section{Reflection principles}

In this section, we summarize the different reflection principles we will consider.

\subsection{Simultaneous, diagonal and exact reflection of stationary sets of ordinals}

The reflection principles formulated in the following definition were originally introduced in Fuchs \cite{Fuchs:DiagonalReflection} and Fuchs \& Lambie-Hanson \cite{Fuchs-LambieHanson:SeparatingDSR}. The concept of an \emph{exact} reflection point is from Fuchs \cite{Fuchs:CanonicalFragmentsOfSRP}. Stationary reflection is of course a staple of set-theoretic research, and a good starting point is Cummings, Foreman \& Magidor \cite{SquaresScalesStationaryReflection}.

\begin{defn}
\label{defn:DiagonalReflection}
Let $\lambda$ be a regular cardinal. If $S\sub\lambda$ is stationary in $\lambda$, then an ordinal $\rho<\lambda$ of uncountable cofinality is called a \emph{reflection point} of $S$ if $S\cap\rho$ is stationary in $\rho$. If $\calS$ is a set of stationary subsets of $\lambda$, then an ordinal $\rho$ is a \emph{simultaneous reflection point} of $\calS$ if it is a reflection point of every $S\in\calS$. It is an \emph{exact reflection point} of $\calS$ if $\rho\cap\bigcup{\calS}$ contains a club (equivalently, $\rho$ is not a reflection point of $\lambda\ohne\bigcup\calS$.) Further, it is a \emph{strongly exact reflection point} of $\calS$ if for every $T\sub\rho$ which is stationary in $\rho$, there is an $S\in\calS$ with $T\cap S$ stationary in $\rho$. Clearly, a strongly exact reflection point is an exact reflection point.

Let $S$ be a stationary subset of some regular cardinal $\lambda$, and let $\kappa$ be a cardinal.

The \emph{simultaneous reflection principle} $\refl(\kappa,S)$ says that whenever $\calS$ is a set of $\kappa$ many stationary subsets of $S$, then $\calS$ has a simultaneous reflection point. The \emph{(strongly) exact simultaneous reflection principle,} denoted \emph{(strongly) exact} $\refl(\kappa,S)$, postulates the existence of a (strongly) exact simultaneous reflection point in this situation.

The \emph{diagonal reflection principle} $\DSR({<}\kappa,S)$ says that if $\langle S_{\alpha,\xi}\st\alpha<\lambda,\ \xi<\zeta_\alpha\rangle$ is a matrix of stationary subsets of $S$, where $\zeta_\alpha<\kappa$ for every $\alpha<\lambda$, then there are an ordinal $\rho<\lambda$ of uncountable cofinality and a set $R\subseteq\rho$ which contains a club in $\rho$, such that $\rho$ is a simultaneous reflection point of the set $\vec{S}|R=\{S_{\alpha,\xi}\st\alpha\in R,\ \xi<\zeta_\alpha\}$. The \emph{(strongly) exact diagonal reflection principle}, denoted \emph{(strongly) exact} $\DSR({<}\kappa,S)$, further requires that $\rho$ is a (strongly) exact reflection point of $\vec{S}|R$.

The versions of these principles in which $j_\alpha\le\kappa$ are denoted $\DSR(\kappa,S)$, (strongly) exact $\DSR(\kappa,S)$, respectively.

If $R$ is only required to be \emph{unbounded} in $\rho$, then the resulting principles are called $\uDSR({<}\kappa,S)$, (strongly) exact $\uDSR({<}\kappa,S)$,
and if it is required to be \emph{stationary,} then they are denoted $\sDSR({<}\kappa,S)$, (strongly) exact $\sDSR({<}\kappa,S)$, respectively.
\end{defn}

Diagonal reflection principles were introduced in \cite{Fuchs:DiagonalReflection} mainly because of their connection to square principles (introduced originally by Jensen \cite{FS}). Namely, if $\lambda$ is regular and $\kappa<\lambda$ is a cardinal, then $\DSR({<}\kappa,S)$, for any stationary $S\sub\lambda$, implies the failure of $\Box(\lambda,{<}\kappa)$. The assumption was subsequently reduced to $\sDSR({<}\kappa,S)$ in \cite{Fuchs-LambieHanson:SeparatingDSR}.

The main import of \emph{exact} reflection is the following fact. The case most often encountered is that $\kappa=\omega_1$ here; see \cite{FMS:MM1}. 

\begin{fact}[Fuchs {\cite[Fact 3.15]{Fuchs:CanonicalFragmentsOfSRP}}] 
\label{fact:PointOfExactness}
If $\lambda>\kappa$ are regular cardinals and $S\sub\lambda$ is stationary in $\lambda$ such that exact $\Refl(\kappa,S)$ holds, then $\lambda^\kappa=\lambda$.
\end{fact}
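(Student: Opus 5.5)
The plan is to code each $\kappa$-sized subset of $\lambda$ by a single ordinal below $\lambda$: its exact reflection point. First, by Solovay's splitting theorem, partition the stationary set $S$ into $\lambda$ many pairwise disjoint stationary sets $\langle S_i\st i<\lambda\rangle$. Next, note that $\lambda^\kappa=\card{[\lambda]^\kappa}$ when $\lambda>\kappa$. Indeed, each function $f:\kappa\to\lambda$ is a subset of $\kappa\times\lambda$ of size $\kappa$, and $\kappa\times\lambda$ has size $\lambda$. So it suffices to build an injection from $[\lambda]^\kappa$ into $\lambda$.

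Given $a\in[\lambda]^\kappa$, let $\calS_a=\{S_i\st i\in a\}$. Since the $S_i$ are pairwise disjoint and distinct, this is a set of exactly $\kappa$ many stationary subsets of $S$. By exact $\Refl(\kappa,S)$, $\calS_a$ has an exact simultaneous reflection point. Let $\rho_a<\lambda$ be the least such point.

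The key claim is that $a$ can be recovered from $\rho_a$:
\[
a=\{i<\lambda\st S_i\cap\rho_a\text{ is stationary in }\rho_a\}.
\]
The inclusion $\subseteq$ holds because $\rho_a$ is a simultaneous reflection point of $\calS_a$. For $\supseteq$, use exactness. There is a club $C\sub\rho_a$ with $C\sub\bigcup_{j\in a}S_j$. If $i\notin a$, then $S_i$ is disjoint from every $S_j$ with $j\in a$, so $S_i\cap C=\leer$. Hence $S_i\cap\rho_a$ is nonstationary in $\rho_a$. Thus $a\mapsto\rho_a$ is injective from $[\lambda]^\kappa$ into $\lambda$, which gives $\lambda^\kappa\le\lambda$, and $\lambda^\kappa\ge\lambda$ is trivial.

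The only step requiring real input is the splitting of $S$ into $\lambda$ many disjoint stationary pieces. For this I would quote Solovay's theorem, applied to the stationary set $S\sub\lambda$ with $\lambda$ regular uncountable. Everything else is bookkeeping. It is the exactness clause, not mere simultaneous reflection, that lets the reflection point determine $a$.
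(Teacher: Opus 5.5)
Your proof is correct, and it is essentially the argument behind the cited fact. That argument is the Foreman--Magidor--Shelah coding trick: partition $S$ into $\lambda$ disjoint stationary pieces and code each $a\in[\lambda]^\kappa$ by an exact simultaneous reflection point of $\{S_i\st i\in a\}$, where exactness ensures that exactly the pieces indexed by $a$ reflect, so $a\mapsto\rho_a$ is injective.
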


The diagonal reflection principles can be viewed as different ways of generalizing a reflection principle due to Paul Larson. 

\begin{notation}
For a regular cardinal $\kappa$ and a limit ordinal $\lambda>\kappa$, we write
\[S^\lambda_\kappa=\{\alpha<\lambda\st\cf(\alpha)=\kappa\}.\]	
\end{notation}

\begin{defn}[Larson {\cite{Larson:SeparatingSRP}}]
\label{def:OSR}
The principle $\OSR_{\omega_2}$ (for ``ordinal stationary reflection'') states that whenever $\vS=\seq{S_\alpha}{\alpha<\omega_2}$ is a sequence of stationary subsets of $S^{\omega_2}_\omega$, there is an ordinal $\rho<\omega_2$ of uncountable cofinality which is a simultaneous reflection point of $\{S_\alpha\st \alpha<\rho\}$. The principle \emph{(strongly) exact} $\OSR_{\omega_2}$ requires $\rho$ to be a (strongly) exact reflection point in this situation.
\end{defn}

It was shown in \cite[Lemma 2.4]{Fuchs-LambieHanson:SeparatingDSR} that $\OSR_{\omega_2}$ is equivalent to $\uDSR(\omega_1,S^{\omega_2}_\omega)$.

\subsection{Fragments of the strong reflection principle}

Before reviewing how to make sense of the fragment of the strong reflection principle associated to a forcing class, a method developed in \cite{Fuchs:CanonicalFragmentsOfSRP}, we fix some notation around generalized stationarity. See \cite[\S4]{Jech:StationarySetsHST} for the basic theory of stationary subsets of $\power_\kappa(A)$. We will mostly be dealing with stationary subsets of $[A]^\omega$.

\begin{defn}
\label{defn:ProjectionsAndLiftings}
Let $\kappa$ be a regular cardinal, and let $S\sub\kappa$ be unbounded. Let $\kappa\sub A$. Then
\[\lifting(S,[A]^\omega)=\{x\in[A]^\omega\st\sup(x\cap\kappa)\in S \}\]
is the \emph{lifting} of $S$ to $[A]^\omega$.

Now suppose $\omega_1 \subseteq B_0 \subseteq A \subseteq B_1$ and $X\sub[A]^\omega$ is stationary. Then we define the projections of $X$ to $[B_0]^\omega$ and $[B_1]^\omega$ by
\begin{gather*}
X\projectdown[B_0]^\omega=\{x\cap B_0 \st x\in X\} , \\
X\projectup[B_1]^\omega=\{y\in[B_1]^\omega\st y\cap A\in X\} .
\end{gather*}
\end{defn}

The strong reflection principle was introduced by \Todorcevic in 1987 (unpublished, but see \cite[p.~57]{Bekkali:TopicsInST}), and its formulation was different, but equivalent to the definition given here, which follows Feng \& Jech \cite{FengJech:ProjectiveStationarityAndSRP}.

\begin{defn}
\label{def:P_S}
Let $H$ be an uncountable set, and let $X\sub[H]^\omega$. Then
$\P_X$ is the forcing notion consisting of conditions of the form $p=\seq{M^p_\xi}{\xi\le \zeta^p}$, with $\zeta^p<\omega_1$, which are increasing continuous $\in$-chains through $X$, and ordered by end-extension.
\end{defn}

The forcing $\P_X$ plays a central role in this work, so let's for later reference state its main properties. For a proof, we refer to Fuchs \cite[Fact 2.9]{Fuchs:CanonicalFragmentsOfSRP}.

\begin{fact}
\label{fact:P_SisCountablyDistributiveAndExhaustive}
Let $\kappa$ be a regular uncountable cardinal, and let $X\sub[H_\kappa]^\omega$ be stationary in $[H_\kappa]^\omega$. Then
\begin{enumerate}[label=\textnormal{(\arabic*)}]
  \item
  \label{item:GenericSequenceHasLengthOmega1}
  for every countable ordinal $\zeta$, the set of conditions $p$ with $\zeta^p\ge\zeta$ is dense in $\P_X$,
  \item
  \label{item:UnionOfGenericIsExhaustive}
  for every $a\in H_\kappa$, the set of conditions $p$ such that there is a $\xi\le\zeta^p$ with $a\in M^p_\xi$ is dense in $\P_X$,
  \item
  \label{item:CountablyDistributive}
  $\P_X$ is countably distributive.
\end{enumerate}
\end{fact}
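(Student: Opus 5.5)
The plan is to prove the three items in order. (1) and (2) are density arguments that use the stationarity of $X$ to find suitable extensions. (3) is proved by building, inside a countable elementary submodel of some $H_\theta$, a chain of conditions whose union can be capped off by the model's trace on $H_\kappa$.

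For \ref{item:UnionOfGenericIsExhaustive}, fix a condition $p$ and $a\in H_\kappa$. The set of $M\in[H_\kappa]^\omega$ with $p\in M$ (i.e., $p$ and all its countably many models belong to $M$, so $M^p_{\zeta^p}\in M$) and $a\in M$ contains a club, namely the traces $N\cap H_\kappa$ of countable $N\prec H_\theta$ with $p,a\in N$; here $p\in H_\kappa$ since $\kappa$ is regular uncountable and $p$ is a countable sequence of countable subsets of $H_\kappa$. By stationarity, pick such an $M\in X$. Then $p\verl\langle M\rangle$ is a condition: it is an $\in$-chain since $M^p_{\zeta^p}\in M$, and continuity at the successor stage $\zeta^p+1$ is vacuous. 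This extension puts $a$ into some model. Item \ref{item:GenericSequenceHasLengthOmega1} also requires a limit stage, so I will derive it from the distributivity argument below, or directly by the same construction. For successor $\zeta$, iterate the one-step extension. For limit $\zeta$, note that a union of an increasing $\omega$-chain of conditions of lengths below $\zeta$ need not be cappable, since the union of the models must itself lie in $X$; this is handled exactly as in (3).

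For \ref{item:CountablyDistributive}, take dense open sets $D_n$, $n<\omega$, and a condition $p$. Let $\theta$ be large, and consider the club of traces $N\cap H_\kappa$ for countable $N\prec\langle H_\theta,\in,X,\P_X,\langle D_n\rangle_n,p\rangle$. By stationarity, choose such an $N$ with $M:=N\cap H_\kappa\in X$. Enumerate $N\cap H_\kappa=\{a_n\st n<\omega\}$, and build $p=p_0\ge p_1\ge\cdots$ in $N$ with $p_{n+1}\in D_n$ and $a_n$ in some model of $p_{n+1}$, using \ref{item:UnionOfGenericIsExhaustive} inside $N$ by elementarity. Let $q'=\bigcup_n p_n$, whose length is $\zeta=\sup_n\zeta^{p_n}=N\cap\omega_1$. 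Each model of each $p_n$ is in $N$ and countable, hence a subset of $M$. Every $a\in M$ appears in some model, so $\bigcup_{\xi<\zeta}M^{q'}_\xi=M$. Therefore $q=q'\verl\langle M\rangle$ (placing $M$ at index $\zeta$) is increasing, continuous and $\in$-chained, with all entries in $X$. It is a condition extending every $p_n$, so it lies in $\bigcap_n D_n$ below $p$.

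The same construction, with the $D_n$ chosen as the dense sets from \ref{item:UnionOfGenericIsExhaustive} for the $a_n$, together with $\{r\st\zeta^r\ge\xi\}$ for $\xi<\zeta$ obtained inductively, gives \ref{item:GenericSequenceHasLengthOmega1}. The induction on $\zeta$ goes as follows. Successor steps use one-point extension. At a limit $\zeta$, choose $N$ as above with $\zeta\in N$ (possible since the club condition can include $\zeta\in N$). Interleave cofinally many targets $\xi_n\nearrow\zeta$ in the construction, which is legitimate by the induction hypothesis inside $N$. The resulting length is $N\cap\omega_1\ge\zeta$. The main point needing care throughout is the capping step: one must check that the union of the constructed models is exactly $N\cap H_\kappa$. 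This requires only that each $p_n\in N$ has countable models, which are subsets of $N$, and that the enumeration forces coverage.
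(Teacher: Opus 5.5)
Your argument is correct and is the standard one the paper relies on. The paper only cites the Fact from Fuchs, but it reproduces your capping construction in the proof of Claim~(1) of Lemma~\ref{lem:SRP++ImpliesExactDSR} and in Lemma~\ref{lem:PreservationOfMediatedProjectiveStationarity}: a descending sequence of conditions inside a countable $N\prec H_\theta$ with $N\cap H_\kappa\in X$, capped off by $N\cap H_\kappa$.

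One small correction: since you only meet the given $D_n$ and the covering sets, $\sup_n\zeta^{p_n}$ need not equal $N\cap\omega_1$; for instance it can be $\omega$. Nothing depends on this, though. You only need two things:
\begin{enumerate}
\item The supremum is not attained. If the $p_n$ stabilized at some $p_{n_0}$, coverage would give $M\subseteq M^{p_{n_0}}_{\zeta^{p_{n_0}}}\in M$, violating foundation.
\item For~(1), the supremum is at least $\zeta$, which your targets $\xi_n\nearrow\zeta$ already ensure.
\end{enumerate}
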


\begin{defn}
\label{def:FragmentsOfSRPandForcingAxioms}
For a class $\Gamma$ of forcing notions, the \emph{$\Gamma$-fragment of the strong reflection principle,} denoted $\Gamma$-$\SRP$, states that whenever $\kappa>\omega_1$ is a regular cardinal and $X\sub[H_\kappa]^\omega$ is stationary and $\P_X\in\Gamma$, then there is an increasing continuous $\in$-chain of length $\omega_1$ through $X$.

The strong reflection principle, denoted \SRP, is $\Gamma$-\SRP where $\Gamma$ is the class of all stationary set preserving forcing notions. And, fixing a regular cardinal $\kappa$, $\Gamma$-$\SRP(\kappa)$ is formulated like $\Gamma$-\SRP, but only for stationary sets $X\sub[H_\kappa]^\omega$.

We also write $\FA_{\omega_1}(\Gamma)$ for the forcing axiom for $\Gamma$ stating that given any forcing notion $\P\in\Gamma$ and any collection $\mathcal{D}$ of up to $\omega_1$ many dense subsets of $\P$, there is a filter in $\P$ meeting each set in $\mathcal{D}$. Martin's Maximum, introduced in \cite{FMS:MM1}, is the case where $\Gamma$ consists of all stationary set preserving forcing notions.

A strengthening of this forcing axiom, denoted $\FA^{++}(\Gamma)$, states that, given $\P\in\Gamma$, a collection $\mathcal{D}$ as before, and a set $\mathcal{T}$ of up to $\omega_1$ many $\P$-names such that for every $\dot{T}\in\mathcal{T}$, $\forces_\P$``$\dot{T}$ is a stationary subset of $\omega_1$,'' there is a filter $F$ in $\P$ which meets each member of $\mathcal{D}$ and interprets each $\dot{T}\in\mathcal{T}$ as a stationary subset of $\omega_1$, that is $\{\xi<\omega_1\st\exists p\in F\quad p\forces\check{\xi}\in\dot{T}\}$ is stationary.  
\end{defn}

If $X\sub[H_\kappa]^\omega$ is stationary, then for every $\zeta<\omega_1$, the set $D_\zeta$ consisting of all conditions in $\P_X$ of length greater than $\zeta$ is dense in $\P_X$, by Fact \ref{fact:P_SisCountablyDistributiveAndExhaustive}. In particular, the strong reflection principle follows from Martin's Maximum, and many of the major consequences of \MM follow already from \SRP. The ones that matter most to us here are:  
\begin{itemize}
\item the nonstationary ideal on $\omega_1$ is $\omega_2$-saturated (due to \Todorcevic, see \cite[Thm.~37.22]{ST3}),
\item exact $\Refl(\omega_1,S^\kappa_\omega)$ for regular $\kappa>\omega_1$ (essentially due to \Todorcevic, see \cite[Ex.~2.3]{FengJech:ProjectiveStationarityAndSRP} and \cite[Obs.~3.14]{Fuchs:CanonicalFragmentsOfSRP}),
\item  $2^\omega=\omega_2$ (by Fact \ref{fact:PointOfExactness}, these reflection principles imply that $2^\omega\le\omega_2^{\omega_1}=\omega_2$. The other direction follows as in the proof of Observation \ref{obs:Woodin} and is due to Woodin). 
\end{itemize}

On the other hand, a consequence of \MM that is not captured by \SRP is given by the following.

\begin{fact}[Larson {\cite[\S 4]{Larson:SeparatingSRP}}]
\label{fact:LarsonsSeparation}
$\MM$ implies $\OSR_{\omega_2}$, but assuming the consistency of $\MM$, $\SRP+\neg\OSR_{\omega_2}$ is consistent.
\end{fact}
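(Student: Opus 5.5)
The statement has two parts, and I will treat them separately: first that \MM implies $\OSR_{\omega_2}$, then that $\SRP+\neg\OSR_{\omega_2}$ is consistent relative to \MM.

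\textbf{\MM implies $\OSR_{\omega_2}$.} Let $\vS=\seq{S_\alpha}{\alpha<\omega_2}$ be a sequence of stationary subsets of $S^{\omega_2}_\omega$. I will design a stationary set preserving forcing whose generic adds the reflection point. Force with $\Col{\omega_1,\omega_2}$, which is $\sigma$-closed and hence preserves stationary subsets of $\omega_1$. In the extension, $\omega_2^V$ has cofinality $\omega_1$, and each $S_\alpha$ stays stationary in $\omega_2^V$, because $\sigma$-closed forcing preserves stationarity of sets of countable-cofinality ordinals.

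Next fix a club $\seq{\gamma_i}{i<\omega_1}$ in $\omega_2^V$. I would then add a club $C\sub\omega_1$ by a forcing that shoots through the set of $i$ such that $\gamma_i\in\bigcap_{\alpha\in\{\text{the first } i \text{ enumerated indices}\}}S_\alpha$. The intended diagonal condition is that for each $\alpha<\omega_2^V$, almost all points of $C$ land in $S_\alpha$ modulo the enumeration.

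It is more robust to phrase this with countable models. Add a continuous chain of countable elementary submodels $N_i$ of $H_\theta^V$, indexed by $i<\omega_1$, such that $\sup(N_i\cap\omega_2)\in S_\alpha$ for all $\alpha\in N_{i'}\cap\omega_2$ with $i'<i$. This is Larson's forcing. Its stationary set preservation is the main obstacle. The point is that, given a stationary $T\sub\omega_1$ and a name for a club, one builds a condition by a countable elementary chain in which, at the top, the single new sup has to lie in countably many $S_\alpha$ simultaneously.

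That requirement is exactly where I would use the following fact: for countably many stationary subsets of $S^{\omega_2}_\omega$ (in $V$), one can find countable $N\prec H_\theta$ with $\sup(N\cap\omega_2)$ in all of them. I would try to get this by choosing the elements of the chain so that their sups hit each $S_\alpha$, using stationarity of the liftings $\lifting(S_\alpha,[H_\theta]^\omega)$ and a diagonal bookkeeping.

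Then I apply $\FA_{\omega_1}$ to $\omega_1$ many dense sets. The resulting filter yields in $V$ a continuous chain $\seq{N_i}{i<\omega_1}$, and $\rho=\sup_i(N_i\cap\omega_2)$ is an ordinal of cofinality $\omega_1$. Every $\alpha<\rho$ lies in some $N_{i'}$, so $S_\alpha\cap\rho$ contains a tail of the club $\{\sup(N_i\cap\omega_2)\st i<\omega_1\}$ and is therefore stationary in $\rho$.

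\textbf{Consistency of $\SRP+\neg\OSR_{\omega_2}$.} Starting from a model of \MM, I will force a counterexample while preserving \SRP. Add, with countable conditions, a sequence $\seq{S_\alpha}{\alpha<\omega_2}$ of subsets of $S^{\omega_2}_\omega$ designed to be nonreflecting diagonally: every $\rho$ of cofinality $\omega_1$ has some $\alpha<\rho$ with $S_\alpha\cap\rho$ nonstationary. The forcing should be $\sigma$-closed and $\omega_2$-distributive, for instance by making the $S_\alpha$ pairwise disjoint on a tail and killing reflection generically.

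To show that \SRP survives, I take a stationary $X\sub[H_\kappa]^\omega$ in the extension. I show that $\P_X$ (in the extension), composed with the forcing, embeds into a stationary set preserving forcing in the ground model followed by a projection, so that ground-model \MM gives the $\omega_1$-chain. This mirrors Larson's argument: \MM remains true for $\sigma$-closed-then-$\P_X$ iterations, and the $\omega_1$-length chain reflects downward because the extension adds no new $\omega_1$-sequences.

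The hard step here is verifying that the generic sequence genuinely refutes $\OSR_{\omega_2}$, meaning that each $S_\alpha$ is stationary and yet no $\rho$ is a simultaneous reflection point. I would attempt this via a density argument over conditions approximating $\rho$, using a club in $\rho$ of order type $\omega_1$ built from the conditions.
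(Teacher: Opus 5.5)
Both halves of your proposal have a genuine gap.

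\textbf{The implication from \MM to $\OSR_{\omega_2}$.} Your argument fails at its central requirement. You demand that $\sup(N_i\cap\omega_2)$ lie in $S_\alpha$ for \emph{every} $\alpha\in N_{i'}\cap\omega_2$ with $i'<i$, that is, in countably many $S_\alpha$ at once. You propose to get this from the claim that countably many stationary subsets of $S^{\omega_2}_\omega$ always have a common point of the form $\sup(N\cap\omega_2)$. That claim is false. Nothing prevents the $S_\alpha$ from being pairwise disjoint; the counterexample sequence in Larson's model is a partition of $S^{\omega_2}_\omega$. If $S_0\cap S_1=\leer$, then since $0,1\in N_0$ your chain cannot even be extended past $N_0$.

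Likewise, your conclusion that $S_\alpha\cap\rho$ contains a tail of the club $\{\sup(N_i\cap\omega_2)\st i<\omega_1\}$ for \emph{all} $\alpha<\rho$ would give $S_0\cap S_1\neq\leer$. $\OSR_{\omega_2}$ only asks for stationarity, and that weaker conclusion is what you must aim for.

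The missing idea is to spread the requirements over a partition of $\omega_1$. This is exactly the mechanism of Definition~\ref{def:MagicalSets} and Lemma~\ref{lem:StronglyExactSimultaneousRP}; the paper only cites Larson for this half. Concretely:
\begin{enumerate}[label=(\roman*)]
\item Fix in $\V$ a partition $\seq{T_i}{i<\omega_1}$ of $\omega_1$ into stationary sets with $T_i\cap(i+1)=\leer$.
\item Let $e:\omega_1\To\omega_2^{\V}$ be the surjection added by $\Col{\omega_1,\omega_2}$.
\item Shoot a continuous chain that imposes, at $\xi\in T_i$, only the single requirement $\sup(N_\xi\cap\omega_2)\in S_{e(i)}$.
\end{enumerate}
Stationary set preservation then goes through. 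Given a stationary $T$, a diagonal intersection argument yields an $i$ with $T\cap T_i$ stationary. Extend the condition to decide $e(i)=\alpha$. The ZFC fact that $\{x\st x\cap\omega_1\in T\cap T_i,\ \sup(x\cap\omega_2)\in S_\alpha\}$ is stationary supplies the model over which to build the generic sequence. Now apply \MM, using $\omega_1$ dense sets that make the union of the chain contain $\omega_1$ and be covered by the range of the filter's $e$. Then each $S_\alpha\cap\rho$ is stationary: it contains the image of a ground-model stationary $T_i$ with $e(i)=\alpha$ under the continuous map $\xi\mapsto\sup(N_\xi\cap\omega_2)$.

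\textbf{The consistency of $\SRP+\neg\OSR_{\omega_2}$.} Your sketch omits the device that makes Larson's argument work (the paper's Theorem~\ref{thm:ALaLarson}, via Theorem~\ref{thm:AbstractPreservation}).

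If you apply \MM in $\V$ to $\P_{\omega_2}*\dot{\P}_{\dot{X}}$, the $\P_{\omega_2}$-part of the resulting filter is an $\omega_1$-sized directed set with, in general, no lower bound. A union of $\omega_1$ initial segments of the generic sequence, cofinal in some $\rho$ of cofinality $\omega_1$, cannot be extended to a condition if it makes $\rho$ a simultaneous reflection point. So you have no way to transfer the $\P_X$-part of the filter into $\V[\vS]$, where $X$ lives. Your remark that \MM ``remains true'' is also unavailable, since \MM fails in $\V[\vS]$: it implies $\OSR_{\omega_2}$.

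The fix is to add the club shooting $\T_{S_\alpha}$:
\begin{enumerate}[label=(\roman*)]
\item $\P_{\omega_2}*\dot{\T}_{\dot{S}_\alpha}$ has an $\omega_2$-directed closed dense subset, which supplies the needed lower bound $r^*$.
\item One then applies \MM to $\P_{\omega_2}*(\dot{\P}_{\dot{X}}\times\dot{\T}_{\dot{S}_\alpha})$.
\item For this you need an $\alpha$ for which $\P_X\times\T_{S_\alpha}$ is stationary set preserving in $\V[\vS]$. By Lemma~\ref{lem:PreservationOfP_SbeingSSP}, it suffices that $X_T\ohne\lifting(S_\alpha,[H_\theta]^\omega)$ be stationary for every stationary $T\sub\omega_1$.
\item The existence of such an $\alpha$ is where the $\omega_2$-saturation of $\NS_{\omega_1}$ is used. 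Two bad witnesses $T_{\alpha_0},T_{\alpha_1}$ would meet in a stationary $T$. Then $X_T$ would be covered by two nonstationary sets, because $\sup(x\cap\omega_2)$ cannot lie in both of the disjoint sets $S_{\alpha_0},S_{\alpha_1}$.
\end{enumerate}
None of this appears in your proposal. Conversely, the step you single out as hard, that $\vS$ refutes $\OSR_{\omega_2}$, is built into the conditions and is the routine part.
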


The fragments of \SRP corresponding to forcing classes were introduced in Fuchs \cite{Fuchs:CanonicalFragmentsOfSRP}. Of particular interest for us is the class of $\infty$-subcomplete forcing notions. This class is a simplification due to Fuchs \& Switzer \cite{Fuchs-Switzer:IterationThmForSubversions} of the concept of subcompleteness, originally introduced by Jensen \cite{Jensen:SPSCF}, \cite{Jensen2014:SubcompleteAndLForcingSingapore}. In order to formulate this notion, we first need a technical concept due to Jensen.

\begin{defn}[Jensen]
\label{def:Full}
A transitive model $\calN$ of \ZFCm is \emph{full} if there is an ordinal $\gamma>0$ such that $L_\gamma(\calN)\models\ZFCm$ and $\calN$ is regular in $L_\gamma(\calN)$, meaning that if $a\in \calN$, $f:a\To \calN$ and $f\in L_\gamma(\calN)$, then $\ran(f)\in \calN$. A possibly nontransitive but well-founded model of \ZFCm is full if its transitive isomorph is full.
\end{defn}

We can now state when a forcing notion is $\infty$-subcomplete. We follow Jensen in using the notation $\calL_\tau^A=\kla{L_\tau[A],{\in},A\cap L_\tau[A]}$.

\begin{defn}
\label{def:(infinity-)subcompleteness}
A forcing notion $\P$ is $\infty$-\emph{subcomplete} if every sufficiently large cardinal $\theta$ \emph{verifies} the $\infty$-subcompleteness of $\P$, which means that $\P\in H_\theta$, and for any \ZFCm{} model $\calN=\calL_\tau^A$ with $\theta<\tau$ and $H_\theta\sub \calN$, any $\sigma:\bcalN\prec \calN$ such that $\bcalN$ is countable, transitive and full and such that $\P,\theta\in\ran(\sigma)$, any $\bar{G}\sub\bar{\P}$ which is $\bar{\P}$-generic over $\bcalN$, any $\bar{a}\in\bcalN$, the following holds. Letting $\sigma(\kla{\bar{\theta},\bar{\P}})=\kla{\theta,\P}$, there is a condition $p\in\P$ such that whenever $G\sub\P$ is $\P$-generic over $\V$ with $p\in G$, there is in $\V[G]$ a $\sigma':\bcalN\prec \calN$ such that
	\begin{enumerate}[label=(\arabic*)]
		\item
        \label{item:AgreementOnParameters}
        $\sigma'(\kla{\bar{a},\bar{\theta},\bar{\P}})=\sigma(\kla{\bar{a},\bar{\theta},\bar{\P}})$.
		\item
        \label{item:LiftingCondition}
        $(\sigma')``\bar{G}\sub G$.
	\end{enumerate}
\end{defn}

Jensen's original form of subcompleteness has an additional technical requirement, and is thus more restrictive than $\infty$-subcompleteness.
The crucial properties of $\infty$-subcomplete forcing are that they are iterable (using Miyamoto's nice iterations and do not add reals; see \cite{Fuchs-Switzer:IterationThmForSubversions}) and do not add reals. Yet they are able to change cofinalities and, most relevant to us here, kill suitable stationary subsets of regular cardinals greater than the continuum - we will give the details when we need them. 

\begin{defn}
\label{def:ForcingClasses}
We write $\SSP$ for the class of all forcing notions that preserve stationary subsets of $\omega_1$,  and $\infSC$ for the class of all $\infty$-subcomplete forcing notions. We write $\infty$-\SCFA for $\FA_{\omega_1}(\infty$-$\SCFA)$ and \SCFA for the forcing axiom for the original class of subcomplete forcing.
\end{defn}

The principle \SCFA, despite being consistent with \CH, has many of the consequences that \MM has; see Fuchs \cite{Fuchs:HierarchiesOfForcingAxioms}. An important consequence for us is that $\infty$-\SCFA implies $\DSR(\omega_1,S^\lambda_\omega)$, for every regular $\lambda>2^\omega$ (see Fuchs \cite{Fuchs:DiagonalReflection} and \cite{Fuchs:Errata}). Since the class of subcomplete forcing notions is contained in that of $\infty$-subcomplete forcing, $\SCFA$ is a consequence of $\infty$-\SCFA, and hence, all of these consequences of \SCFA also follow from $\infty$-\SCFA. And $\infty$-\SCFA remains compatible with \CH.

In the terminology of the previous definition, the original strong reflection principle \SRP is \SSP-\SRP. In order to apply this principle to a set $X\sub[H_\kappa]^\omega$, one must know whether $\P_X$ is in \SSP. This is where the following definition and fact come in.

\begin{defn}[Feng \& Jech \cite{FengJech:ProjectiveStationarityAndSRP}]
\label{def:ProjectiveStationarity}
Let $A$ be a set with $\omega_1\sub A$. Then,
a set $X\sub[A]^\omega$ is \emph{projective stationary in $[A]^\omega$} if for every stationary $T\sub\omega_1$, the set
\[X_T=\{x\in X\st x\cap\omega_1\in T\}\]
is stationary in $[A]^\omega$.
\end{defn}

\begin{fact}[Feng \& Jech \cite{FengJech:ProjectiveStationarityAndSRP}]
\label{fact:Feng-Jech}
A set $X\sub[H_\kappa]^\omega$ is projective stationary iff $\P_X$ is stationary set preserving. Thus, \SRP is equivalent to saying that for every regular cardinal $\kappa>\omega_1$ and for every set $X$ which is projective stationary in $[H_\kappa]^\omega$, there is an increasing continuous $\in$-chain of length $\omega_1$ through $X$.
\end{fact}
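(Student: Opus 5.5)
The plan is to prove the two directions of the equivalence separately, using only the basic properties of $\P_X$ from Fact \ref{fact:P_SisCountablyDistributiveAndExhaustive}. The ``thus'' clause about \SRP then follows at once from Definition \ref{def:FragmentsOfSRPandForcingAxioms}, since \SRP is \SSP-\SRP.

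For the direction ``$\P_X$ stationary set preserving $\Rightarrow$ $X$ projective stationary'', fix a stationary $T\sub\omega_1$ and a function $F:[H_\kappa]^{<\omega}\To H_\kappa$ in $\V$.
\begin{enumerate}[label=(\arabic*)]
\item Let $G$ be $\P_X$-generic and $\seq{M_\xi}{\xi<\omega_1^{\V[G]}}$ the generic chain. By Fact \ref{fact:P_SisCountablyDistributiveAndExhaustive} (items \ref{item:GenericSequenceHasLengthOmega1}, \ref{item:CountablyDistributive}), its length is $\omega_1=\omega_1^\V$. By item \ref{item:UnionOfGenericIsExhaustive}, its union is $H_\kappa$.
\item Since the chain is continuous and its union is $H_\kappa$, the set of $\xi$ with $M_\xi\cap\omega_1=\xi$ and $M_\xi$ closed under $F$ contains a club in $\V[G]$.
\item As $T$ remains stationary in $\V[G]$, there is such a $\xi\in T$. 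Then $M_\xi\in X_T$, and $M_\xi$ is closed under $F$.
\item Since $M_\xi$ appears in a condition of $G$, it lies in $\V$. Hence some condition forces that $X_T$ has an $F$-closed element, and so such an element exists in $\V$.
\end{enumerate}
Since $F$ was arbitrary, $X_T$ is stationary.

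For the converse, assume $X$ is projective stationary, let $T\sub\omega_1$ be stationary, let $\dot C$ be a name for a club in $\omega_1$, and let $p\in\P_X$. We want $q\le p$ and $\delta\in T$ with $q\forces\check\delta\in\dot C$. Fix a large regular $\theta$.
\begin{enumerate}[label=(\arabic*)]
\item The lifting of $X_T$ to $[H_\theta]^\omega$ is stationary. So there is a countable $N\prec\kla{H_\theta,\in,\kappa,X,p,\dot C}$ with $N\cap H_\kappa\in X$ and $\delta:=N\cap\omega_1\in T$.
\item Enumerate the dense subsets of $\P_X$ lying in $N$, together with the elements of $N\cap H_\kappa$. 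Build a descending sequence $p=p_0\ge p_1\ge\cdots$ of conditions in $N$. Here $p_{n+1}$ meets the $n$-th dense set and has the $n$-th element of $N\cap H_\kappa$ in one of its models. This uses Fact \ref{fact:P_SisCountablyDistributiveAndExhaustive}\ref{item:UnionOfGenericIsExhaustive} inside $N$ by elementarity, and that fact needs $X$ stationary, which follows from projective stationarity with $T=\omega_1$.
\item Each model of $p_n$ is a countable element of $N$, hence a subset of $N$. So the union of the chains is an increasing continuous $\in$-chain through $X$ whose union is exactly $N\cap H_\kappa$.
\item Appending $N\cap H_\kappa\in X$ as the top model gives a condition $q\le p$. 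The chain remains continuous at the top because the union is $N\cap H_\kappa$.
\item For each $\alpha<\delta$, the set of conditions deciding some element of $\dot C$ above $\alpha$ is a dense set in $N$. The sequence meets it, and the decided value lies in $N\cap\omega_1=\delta$. Hence $q$ forces $\dot C\cap\delta$ to be unbounded in $\delta$, so $q\forces\check\delta\in\dot C$, and $\delta\in T$.
\end{enumerate}

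The main obstacle is the closing-off step in items (3)–(4): ensuring that the limit of the $\omega$-sequence of conditions admits $N\cap H_\kappa$ as a legal continuous top element. This is exactly why the exhaustiveness in Fact \ref{fact:P_SisCountablyDistributiveAndExhaustive}\ref{item:UnionOfGenericIsExhaustive} is interleaved into the construction. It is also the only place where $N\cap H_\kappa\in X$, that is, projective stationarity, is used.
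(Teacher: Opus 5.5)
Your proof is correct and follows the argument the paper relies on. The paper cites Feng--Jech for this Fact rather than proving it there. For the converse direction, it reads off an element of $X_T$ from a club of indices of the generic chain, in the proof of Observation~\ref{obs:MediatedProjectiveStationarity}; your version with $F$-closure sets replaces the paper's club $Z$, which is only a cosmetic difference. For the forward direction, it builds an $N$-generic descending sequence and tops it with $N\cap H_\kappa\in X_T$, as in Claim~(1) of Lemma~\ref{lem:SRP++ImpliesExactDSR} and in Lemma~\ref{lem:PreservationOfMediatedProjectiveStationarity}.

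One point to make explicit: your direction from stationary set preservation invokes Fact~\ref{fact:P_SisCountablyDistributiveAndExhaustive}, which assumes $X$ is stationary. The statement leaves this assumption implicit, and without it the equivalence fails trivially: for $X=\{M\}$ the forcing $\P_X$ is trivial. The assumption is harmless for the clause about \SRP, since \SRP only quantifies over stationary $X$.
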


Since we want to work with the $\infty$-subcomplete fragment of \SRP, it will be useful to have a similar notion characterizing when a set $X\sub[H_\kappa]^\omega$ has the property that $\P_X$ is $\infty$-subcomplete. 

\begin{defn}[Fuchs \cite{Fuchs:CanonicalFragmentsOfSRP}]
\label{def:SpreadOut}
Let $B$ be a set (usually of the form $B=H_\kappa$, for some regular uncountable cardinal $\kappa$). A set $X\sub[B]^\omega$ with $\bigcup X=B$ is \emph{spread out} (in $B$) if for every sufficiently large cardinal $\theta$ with $X\in H_\theta$, whenever $\tau$, $A$, $M$ and $a$ are such that $H_\theta\sub \calL_\tau^A=\calN\models\ZFCm$, $\theta<\tau$, $X,a,\theta\in M \in [L_\tau [A]]^\omega$, $\calN|M\prec \calN$, and $\calN|M$ is full, then there are an $M' \in [L_\tau [A]]^\omega$ such that $\calN|M'\prec \calN$ and $M'\cap B\in X$, and an isomorphism $\pi:\calN|M\To \calN|M'$ such that $\pi(a)=a$.

If $\kappa$ is a cardinal and $X$ satisfies this property with the additional requirement that $\pi\rest\kappa$ is the identity, then $X$ is \emph{spread out above $\kappa$}.
\end{defn}

We remark that any mention of $a$ can be omitted in the preceding definition, because $A$ can be modified so as to make $a$ definable; see \cite[Def.~2.23]{Fuchs:CanonicalFragmentsOfSRP} and the following discussion. In verifying that a set is spread out, we may also assume that some useful parameter $b$ belongs to $M$ in the setting of the definition; the resulting concept was called \emph{weakly} spread out and shown to be equivalent to being spread out in \cite[Obs.~2.27]{Fuchs:CanonicalFragmentsOfSRP}. We will often make use of this fact.

\begin{fact}[Fuchs {\cite[Theorem 2.24]{Fuchs:CanonicalFragmentsOfSRP}}]
For an uncountable regular cardinal $\kappa$, a set $X\sub[H_\kappa]^\omega$ is spread out in $H_\kappa$ iff $\P_X$ is $\infty$-subcomplete. Thus, the principle $\infSC$-$\SRP (\kappa)$ holds iff every spread out subset of $[H_\kappa]^\omega$ contains an increasing continuous $\in$-chain of length $\omega_1$.
\end{fact}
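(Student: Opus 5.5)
The plan is to prove the two directions of the equivalence separately, matching the data in Definition \ref{def:SpreadOut} with the data in Definition \ref{def:(infinity-)subcompleteness}. The correspondence is: a full elementary submodel $\calN|M\prec\calN$ corresponds to the inverse of its transitive collapse $\sigma:\bcalN\prec\calN$, and an isomorphism $\pi:\calN|M\To\calN|M'$ corresponds to the new embedding $\sigma'=\pi\circ\sigma$. Once the equivalence is established, the second sentence is immediate. By Definition \ref{def:FragmentsOfSRPandForcingAxioms}, $\infSC$-$\SRP(\kappa)$ quantifies exactly over stationary $X\sub[H_\kappa]^\omega$ with $\P_X\in\infSC$. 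A spread out set is stationary, since it meets every club by taking $M$ to contain the relevant Skolem function. So the equivalence converts that quantifier into one over spread out sets.

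\emph{Spread out implies $\infty$-subcomplete.} Fix $\theta$ large, $\sigma:\bcalN\prec\calN$ with $\bcalN$ countable and full, $\bar G$ generic over $\bcalN$ for $\bar\P=\bar\P_{\bar X}$, and $\bar a\in\bcalN$. Let $M=\ran(\sigma)$ and $a=\sigma(\kla{\bar a,\bar\theta,\bar\P})$. Then $\calN|M$ is full, and spread outness gives $M'$ together with $\pi:\calN|M\cong\calN|M'$ such that $\pi(a)=a$ and $M'\cap H_\kappa\in X$. Put $\sigma'=\pi\circ\sigma$; this lies in $\V$ and satisfies clause \ref{item:AgreementOnParameters}.
\begin{enumerate}[label=(\roman*)]
\item By Fact \ref{fact:P_SisCountablyDistributiveAndExhaustive}, applied inside $\bcalN$ to the countable ordinals $\zeta<\delta=\omega_1^{\bcalN}$ and to the elements of $\bar H_{\bar\kappa}$, the generic $\bar G$ gives a continuous chain $\seq{\bar M_\xi}{\xi<\delta}$ through $\bar X$ whose union is $\bar H_{\bar\kappa}$.
\item Each $\bar M_\xi$ is countable in $\bcalN$, so $\sigma'(\bar M_\xi)=\sigma'``\bar M_\xi\in X$. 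Hence $\seq{\sigma'(\bar M_\xi)}{\xi<\delta}$ is an increasing continuous $\in$-chain through $X$.
\item Its union is $\sigma'``\bar H_{\bar\kappa}=M'\cap H_\kappa$, which lies in $X$.
\item So $p=\seq{\sigma'(\bar M_\xi)}{\xi<\delta}\verl\kla{M'\cap H_\kappa}$ is a condition. Every element of $\bar G$ is mapped by $\sigma'$ to an initial segment of $p$, so $(\sigma')``\bar G\sub G$ whenever $p\in G$, which is clause \ref{item:LiftingCondition}.
\end{enumerate}

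\emph{$\infty$-subcomplete implies spread out.} Given $\calN$ and $M$ as in Definition \ref{def:SpreadOut}, with $X,a,\theta\in M$, let $\sigma:\bcalN\cong\calN|M$ be the inverse collapse. Choose any $\bar G$ that is $\bar\P_{\bar X}$-generic over the countable model $\bcalN$, and let $\bar a=\sigma^{-1}(a)$. Subcompleteness yields a condition $p$ such that, in $\V[G]$ with $p\in G$, there is $\sigma'$ with $\sigma'(\bar a)=a$ and $(\sigma')``\bar G\sub G$. Then $\bigcup G$ is a chain through $X$ extending the images $\sigma'(\bar M_\xi)$. Its $\delta$-th model is $\bigcup_{\xi<\delta}\sigma'(\bar M_\xi)=\ran(\sigma')\cap H_\kappa$, by continuity, and this model therefore belongs to $X$. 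Thus $M'=\ran(\sigma')$ and $\pi=\sigma'\circ\sigma^{-1}$ witness spread outness in $\V[G]$.

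The main obstacle is pulling this witness back to $\V$. I would argue as follows. The statement ``there are a countable $M'$ with $\calN|M'\prec\calN$ and $M'\cap H_\kappa\in X$, and an isomorphism $\pi$ with $\pi(a)=a$'' involves only countable objects together with the parameters $\calN$ and $X$. By Fact \ref{fact:P_SisCountablyDistributiveAndExhaustive}\ref{item:CountablyDistributive}, $\P_X$ adds no new countable subsets of $\V$. Hence $M'$, $\pi$ and the relevant elementarity facts already lie in $\V$, and elementarity is absolute since $\calN\in\V$. One must also check that the ``sufficiently large $\theta$'' quantifiers in the two definitions can be aligned, in the sense that any $\theta$ verifying one property can be taken to verify the other.
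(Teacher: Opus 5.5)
Your proposal is correct and takes the same route the paper uses for the closely analogous Theorem~\ref{thm:WhatWeGet} (which has an extra $\T_S$ factor). In one direction you build the master condition $(\bigcup\sigma'``\bar{G})\verl\kla{\ran(\sigma')\cap H_\kappa}$; in the other you pull $\sigma'$ back into $\V$ by countable distributivity and observe that $\ran(\sigma')\cap H_\kappa$ is the $\omega_1^{\bcalN}$-th model of the generic chain, hence lies in $X$. You should state explicitly that the converse direction assumes $X$ is stationary. Fact~\ref{fact:P_SisCountablyDistributiveAndExhaustive} requires it, and without it the literal equivalence fails: for a singleton $X$, $\P_X$ is trivial forcing, which is $\infty$-subcomplete, yet $X$ is not spread out. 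This does not affect the second sentence, since $\infSC$-$\SRP(\kappa)$ only concerns stationary $X$.
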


A surprising takeaway from \cite{Fuchs:CanonicalFragmentsOfSRP} is that $\infty$-\SC-\SRP has many of the consequences that the full principle \SRP has, yet it is compatible with \CH. In fact, if one wants to have a local correspondence between these principles, \CH is in a sense necessary.

\begin{fact}[Fuchs {\cite[Obs.~2.32]{Fuchs:CanonicalFragmentsOfSRP}}]
\label{fact:CHneededForLocalCorrespondence}
The principle $\infty$-\SC-$\SRP(\kappa)$ holds for regular uncountable $\kappa\le 2^\omega$.
\end{fact}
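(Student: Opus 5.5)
The plan is to verify the characterization in the preceding Fact directly. Let $\kappa\le 2^\omega$ be regular and uncountable, and let $X\sub[H_\kappa]^\omega$ be spread out in $H_\kappa$. I will build an increasing continuous $\in$-chain $\seq{M_\xi}{\xi<\omega_1}$ through $X$ by recursion on $\xi$, working in $\V$. The spread-out property handles successor steps. The hypothesis $\kappa\le 2^\omega$ should handle limit steps.

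\emph{Successor steps.} Suppose $p=\seq{M_\eta}{\eta\le\zeta}$ is already built. Since $p$ is a countable sequence of countable subsets of $H_\kappa$, we have $p\in H_\kappa$. Fix $\theta$ large and $\calN=\calL^A_\tau\supseteq H_\theta$. Take a countable $M$ with $X,p,\theta\in M$ and $\calN|M\prec\calN$, chosen so that $\calN|M$ is full; this is the standard fact that countable elementary submodels of such $\calN$ are full. Apply spread-outness with parameter $a=p$. This yields $M'$ with $M'\cap H_\kappa\in X$ and an isomorphism $\pi:\calN|M\to\calN|M'$ with $\pi(p)=p$. Then $p\in M'$, and since $p$ is countable, $p\sub M'$. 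Hence $M_{\zeta+1}:=M'\cap H_\kappa$ end-extends the chain with $M_\zeta\in M_{\zeta+1}$. By the weak spread-out equivalence we may also put any desired bookkeeping parameter into $M$.

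\emph{Limit steps.} This is the main obstacle: we need $M_\lambda:=\bigcup_{\eta<\lambda}M_\eta$ to lie in $X$, and spread-outness only produces some isomorphic copy in $X$, not a prescribed set. Here I would use $\kappa\le 2^\omega$. Fix a surjection $f:{}^\omega\omega\to H_\kappa$ (or at least onto a relevant part of $H_\kappa$) and code $A$ so that $f$ and a well-order of the reals are definable in $\calN$. The idea is to run the successor steps with $a$ chosen so that $\pi$ is forced to fix, pointwise, a countable set of reals coding the whole chain built so far. Then one should be able to arrange that the limit model $\calN|\bigcup_{\eta<\lambda}M^*_\eta$ is itself full and elementary. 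Here the $M^*_\eta$ are the full models chosen along the way, arranged to form a continuous elementary chain. Applying spread-outness to this limit model, with the coding parameter fixed, should return $M'$ satisfying $M'\cap H_\kappa=M_\lambda$.

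\emph{Fallback for the limit step.} If this direct argument resists, I would instead use forcing. Since $\P_X$ is $\infty$-subcomplete, it adds no reals and preserves $\omega_1$. A generic filter $G$ yields a chain of length $\omega_1$ whose union is $H^\V_\kappa$, by Fact~\ref{fact:P_SisCountablyDistributiveAndExhaustive}. Because $\kappa\le 2^\omega$, every element of $H_\kappa$ is coded by a real, and the reals are not changed by the forcing. I would then try to pull the generic chain back into $\V$ by coding it via a sequence of reals. The point to verify is that the required sequence is determined by countable pieces, each of which lies in $\V$ by countable distributivity. I expect making this coding work to be the crux of the proof.
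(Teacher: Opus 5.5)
Your successor step is fine (it is the usual density argument for $\P_X$), but the whole content of the statement sits in the limit step. There the proposal only records intentions (``should be able to arrange'', ``I expect \ldots to be the crux''), so there is a genuine gap. Each concrete mechanism you suggest also fails as stated:

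(i) You cannot make $\pi$ fix a real coding the chain built so far by choosing the parameter $a$. At a limit $\lambda$, with $M_\lambda=M^*_\lambda\cap H_\kappa$, no element of $M^*_\lambda$ can code $\seq{M_\eta}{\eta<\lambda}$ in an $\calN$-definable way. Otherwise $M_\lambda$ would be an element of $M^*_\lambda\cap H_\kappa=M_\lambda$.

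(ii) ``Every element of $H_\kappa$ is coded by a real'' is false in general. We have $|H_\kappa|=2^{<\kappa}$, which exceeds $2^\omega$ e.g.\ when $\kappa=2^\omega=\omega_2$ and $2^{\omega_1}=\omega_3$.

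(iii) The forcing fallback cannot work. Countable distributivity puts each countable initial segment of the generic chain into $\V$, but not the $\omega_1$-sequence itself. Its union is $H^{\V}_\kappa$, so it even collapses $\kappa$ when $\kappa>\omega_1$. Getting a chain in $\V$ out of $\P_X$ is an instance of a forcing axiom for $\P_X$, i.e.\ exactly what has to be proved.

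The paper does not prove this Fact; it cites it. The ingredient it later draws from the cited observation (in the proof of its last Observation) is the one you are missing: rigidity is automatic and needs no parameter.

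Let $\pi:\calN|M\to\calN|M'$ be an isomorphism. Then $\pi\restriction\omega=\mathrm{id}$, hence $\pi(r)=r$ for every real $r\in M$. Let $e$ be the least injection of $2^\omega$ into $\power(\omega)$ in the canonical well-order of $\calN=\calL^A_\tau$. Then $e(\pi(\alpha))=\pi(e(\alpha))=e(\alpha)$, so $\pi(\alpha)=\alpha$ for all $\alpha\in M\cap 2^\omega$. In particular $M\cap\kappa=M'\cap\kappa$.

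Suppose this rigidity covers all of $M\cap H_\kappa$, e.g.\ if $2^{<\kappa}=2^\omega$, via a definable injection of $H_\kappa$ into $\power(\omega)$. Then $M'\cap H_\kappa=\pi``(M\cap H_\kappa)=M\cap H_\kappa$. So spread-outness puts $M\cap H_\kappa$ itself into $X$ for every suitable $M$. Any continuous chain of suitable models of length $\omega_1$ whose traces on $H_\kappa$ form an $\in$-chain then yields the required chain, and no limit-stage repair is needed at all.

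The same observation is what would rescue your incremental coding idea: reals in $M^*_\lambda$ coding earlier $M_\eta$'s are fixed without being parameters. But rigidity only reaches elements of $H_\kappa$ that are definable from reals and ordinals below $2^\omega$. So when $2^{<\kappa}>2^\omega$ your approach would need a further argument, which the proposal does not provide.
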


Hence, in order for $\infty$-\SC-$\SRP(\omega_2)$ to have any nontrivial consequences (as $\SRP(\omega_2)$ has), \CH must be assumed. We will often do this. It should be noted that the principle $\infty$-\SCFA is consistent, assuming the existence of a supercompact cardinal, and in the model obtained by performing a nice iteration of $\infty$-subcomplete forcings, guided by a Laver diamond on the supercompact cardinal, $\infty$-\SCFA${}+ \CH$ holds (and hence so does $\infty$-\SC-\SRP); see \cite{Jensen:FAandCH}. This may be viewed as another argument that the assumption of \CH in addition to $\infty$-\SCFA or $\infty$-\SC-\SRP is a natural one. But of course, $\MM$ implies $\infty$-\SCFA${}+2^\omega=\omega_2$, so \CH does not follow from $\infty$-\SCFA. It is not known whether $\infty$-\SCFA is consistent with the continuum being larger than $\omega_2$.

In many ways, the spread out sets behave similarly to the projective stationary ones, in particular in the following sense.

\begin{obs}[Fuchs {\cite[Obs.~2.30]{Fuchs:CanonicalFragmentsOfSRP}}]
Let $\omega_1 \subseteq B_0 \sub A\sub B_1$, and let $X\sub [A]^\omega$ be spread out in $A$. Then $X\projectdown[B_0]^\omega$ and $X\projectup[B_1]^\omega$ are spread out in $B_0$ and $B_1$, respectively.
\end{obs}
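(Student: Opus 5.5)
The plan is to verify Definition \ref{def:SpreadOut} directly for both sets. In each case I transfer the witness $M'$ that the spread-outness of $X$ provides. Write $Y=X\projectdown[B_0]^\omega$ and $Z=X\projectup[B_1]^\omega$.

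First I check the side condition $\bigcup Y=B_0$ and $\bigcup Z=B_1$. Since $\bigcup X=A\supseteq B_0$, every $b\in B_0$ lies in some $x\in X$, so $b\in x\cap B_0\in Y$. For $Z$, let $b\in B_1$. If $b\in A$, pick $x\in X$ with $b\in x$; then $x\in Z$, since $x\cap A=x$. If $b\notin A$, pick any $x\in X$; then $y=x\cup\{b\}$ is countable, $y\cap A=x\in X$, and so $b\in y\in Z$.

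Next I handle the parameters. I use the equivalence of spread out and weakly spread out (\cite[Obs.~2.27]{Fuchs:CanonicalFragmentsOfSRP}), so I may assume that a convenient parameter lies in $M$. Take $\theta$ larger than the threshold witnessing that $X$ is spread out, and with $A,B_0,B_1,X\in H_\theta$. Let $H_\theta\sub\calN=\calL^{A'}_\tau$ (renaming the predicate to avoid a clash with the set $A$), and let $M$ be countable with $\calN|M\prec\calN$ and $\calN|M$ full. Given a parameter $a$, I arrange that $M$ contains $\langle X,A,B_0,B_1,a\rangle$. For this I use the weakly spread out formulation, or equivalently I note that $X$ is definable in $\calN$ from $Z$ and $A$ (as $X=\{z\cap A\st z\in Z\}$, using $Z\neq\leer$), respectively that $Y$ is definable from $X$ and $B_0$. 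Then the spread-outness of $X$ applied to the parameter $\langle a,A,B_0,B_1\rangle$ yields an $M'\in[L_\tau[A']]^\omega$ with $\calN|M'\prec\calN$ and $M'\cap A\in X$, together with an isomorphism $\pi:\calN|M\To\calN|M'$ fixing that parameter, and in particular fixing $a$.

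It then remains to check membership. For $Y$: since $B_0\sub A$, we have $M'\cap B_0=(M'\cap A)\cap B_0\in Y$. For $Z$: the set $M'\cap B_1$ is countable and $(M'\cap B_1)\cap A=M'\cap A\in X$ because $A\sub B_1$, so $M'\cap B_1\in Z$. The same $\pi$ therefore witnesses the spread-out condition for $Y$ and for $Z$.

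The only real obstacle is the bookkeeping of the hypotheses. First, $\theta$ must be sufficiently large in the sense required for $X$, while the definition for $Y$ or $Z$ only hands us a $\theta$ with $Y\in H_\theta$ or $Z\in H_\theta$; enlarging the threshold handles this. Second, the model $M$ given for $Y$ or $Z$ must contain $X$, which is exactly where the weakly spread out equivalence, or the definability remark, is used.
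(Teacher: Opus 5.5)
Your proof is correct and is the direct verification the cited observation rests on. You use the weakly spread out equivalence to put $\langle X,A,B_0,B_1\rangle$ into $M$, apply spread-outness of $X$ with the same $\theta,\calN,M,a$, and then note that $M'\cap B_0=(M'\cap A)\cap B_0\in X\projectdown[B_0]^\omega$ and $(M'\cap B_1)\cap A=M'\cap A\in X$.

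One small correction: your definability remark is not an alternative to the weak formulation. For $X\projectdown[B_0]^\omega$ it points the wrong way, since $X$ cannot be recovered from its projection. For $X\projectup[B_1]^\omega$ it still needs $A\in M$. So the weakly spread out equivalence is what actually carries that step.
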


The following lemma is our main tool in verifying that a set is spread out.

\begin{lem}[Fuchs {\cite[Lemma 3.5]{Fuchs:CanonicalFragmentsOfSRP}}]
\label{lem:OneStep}
Let $\kappa>2^\omega$ be a regular cardinal, $\theta>2^{2^\kappa}$ regular, $\calN$ a transitive model of $\ZFCm$ (in a finite language) with a definable well-ordering of its universe and $H_\theta\sub \calN$, $\sigma:\bcalN\prec \calN$, where $\bcalN$ is countable, transitive and full, with $\kappa\in\ran(\sigma)$. Let $\eta\in\kappa\cap\ran(\sigma)$ be such that $\eta^\omega<\kappa$. Let $\bar{\kappa},\bar{\eta}$ be the preimages of $\kappa,\eta$ under $\sigma$, respectively. Let $\bar{a}$ be some element of $\bcalN$.

Then there is an $\omega$-club of ordinals $\alpha<\kappa$ for which there is an embedding $\sigma':\bcalN\prec \calN$ with the following properties:
\begin{enumerate}[label=\textnormal{(\alph*)}]
  \item
  \label{item:MoveBasicParametersTheSameWay}
  Letting $p=\{\bar{a},\bar{\kappa},\bar{\eta}\}$, we have that $\sigma\rest p=\sigma'\rest p$.
  \item
  \label{item:SameBehaviorBelow}
  $\sigma\rest\bar{\eta}=\sigma'\rest\bar{\eta}$.
  \item
  \label{item:CorrectSupremum}
  $\sup\sigma'``\bkappa=\alpha$.
\end{enumerate}
\end{lem}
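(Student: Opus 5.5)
The plan is to reduce the existence of a suitable $\sigma'$ to the consistency of a countable infinitary theory, and then to use elementarity together with a counting argument that relies on $\eta^\omega<\kappa$. First I fix the data that $\sigma'$ has to respect: the finite set of values $\sigma\rest p$ and the countable function $s=\sigma\rest\bar\eta$, where $s$ is a countable subset of $\eta\times\eta$. For an ordinal $\alpha<\kappa$ of cofinality $\omega$, consider the statement $\Phi(s,\sigma\rest p,\alpha)$: ``there is $\sigma':\bcalN\prec\calN$ with $\sigma'\rest p=\sigma\rest p$, $\sigma'\rest\bar\eta=s$ and $\sup\sigma'``\bkappa=\alpha$.''

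The key tool is Jensen's absoluteness argument via fullness. Since $\bcalN$ is full, there is a $\gamma$ with $L_\gamma(\bcalN)\models\ZFCm$ in which $\bcalN$ is regular. The existence of such an embedding into (a suitable elementary piece of) $\calN$ can then be expressed as the consistency of an $\mathcal{L}_{\omega_1\omega}$-theory that lives in an admissible set. By Barwise completeness this is absolute between $\V$ and any extension, for example one collapsing $\alpha$ to be countable. So to verify $\Phi$ it suffices to produce the embedding in a forcing extension, and one can also bound the ordinals involved inside $H_\theta$. Here $\theta>2^{2^\kappa}$ guarantees that all the relevant objects, including the theory and the quantification over countable subsets of $\kappa$, are available in $H_\theta\sub\calN$.

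Next I show that the set $C$ of $\alpha$ satisfying $\Phi(s,\sigma\rest p,\alpha)$ contains an $\omega$-club, and I split this into closure and unboundedness. For \emph{closure}, let $\alpha_n\in C$ increase to $\alpha$. I amalgamate the witnesses $\sigma'_n$ by building, in a collapse extension, a single embedding whose image of $\bkappa$ is cofinal in $\alpha$. The natural way to do this is to consider the tree of finite partial approximations to embeddings, run through a cofinal $\omega$-sequence of $\bkappa$, and at stage $n$ choose an element above $\alpha_{n-1}$ but below $\alpha$; I then pull this back to $\V$ by absoluteness. For \emph{unboundedness}, fix $\beta<\kappa$. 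There are only $\eta^\omega<\kappa$ possible values of $s$ (together with the finitely many values in $\ran(\sigma)$). So the function sending each admissible $s$ to the least $\alpha$ above $\beta$ witnessing $\Phi$, where such an $\alpha$ exists, has a bound below $\kappa$ by regularity. Closing off under these functions, the closure points of cofinality $\omega$ form an $\omega$-club $D$. Applying this inside $\calN$ by elementarity (with $\sigma$ itself witnessing $\Phi(s,\sigma\rest p,\sup\sigma``\bkappa)$, so that existence is nonvacuous), I then argue that every $\alpha\in D$ lies in $C$.

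The main obstacle is the amalgamation and closure step: showing that the witnesses can be coherently shifted so that the supremum lands \emph{exactly} on a prescribed $\alpha$, rather than merely above it. My first attempt would be Jensen's standard technique. In $\V[\Col(\omega,\alpha)]$ I would take a countable $X\prec\calN$ containing $\ran(\sigma)\cap\calN$-definable parameters from $s$ and a cofinal $\omega$-sequence in $\alpha$, with $\sup(X\cap\kappa)=\alpha$ arranged using closure of $\alpha$ under the bounding functions. Fullness of $\bcalN$ is what makes the collapse of $X$ map $\bcalN$ elementarily, with the required agreement on $p$ and on $\bar\eta$, and the final step is to bring the resulting $\sigma'$ back to $\V$ via the Barwise absoluteness noted above.
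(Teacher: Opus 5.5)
The paper does not prove this lemma; it is quoted from \cite{Fuchs:CanonicalFragmentsOfSRP}. So I am judging your sketch on its own, and its central step does not go through.

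\textbf{The closure step.} Your decomposition needs the set $C$ of attainable suprema to be closed under $\omega$-limits. But the witnesses $\sigma'_n$ for $\alpha_n\in C$ agree only on $p\cup\bar\eta$ and are otherwise unrelated, so there is nothing to amalgamate. Suppose you build $\sigma'$ as a union of finite partial maps $c_0\subseteq c_1\subseteq\cdots$. The union is elementary only if each $c_k$ stays extendible to a full elementary embedding respecting $s=\sigma\rest\bar\eta$ and $\sigma\rest p$. Also, all of $\bkappa$, not just the points you choose, must end up mapped below $\alpha$. So at stage $k$ you need that every such realizable finite $c$ has realizations with supremum in $(\alpha_k,\alpha)$. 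The fact $\alpha_k\in C$ says nothing about realizations of the new constraints $c_k$.

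Your fallback in the last paragraph does not repair this. A countable $X\prec\calN$ built in $\V[\Col{\omega,\alpha}]$ has no reason to be isomorphic to $\bcalN$, or to receive an embedding of $\bcalN$ with the prescribed values. Fullness gives Barwise absoluteness of the existence of such embeddings, not the embeddings themselves.

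\textbf{The unboundedness step.} Your bounding function only locates witnesses above $\beta$ if they exist. The single witness $\sup\sigma``\bkappa$ does not show that they exist for arbitrary $\beta$. Relatedly, you never use $\kappa>2^\omega$.

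\textbf{Definability.} $\Phi$ quantifies over fully elementary maps into $\calN$ itself, which by Tarski is not first-order over $\calN$. So ``applying this inside $\calN$ by elementarity'' must be recast over a set-sized structure. This is the kind of bookkeeping for which fullness and the hypothesis $\theta>2^{2^\kappa}$ are available.

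\textbf{What is missing.} You need an elementarity-and-counting argument made uniform over finite strengthenings of the constraints, and a different choice of club.

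First, show the following. If $\tau$ realizes a finite $c\supseteq\sigma\rest p$ together with $s$, then the suprema on $\bkappa$ of realizations of $c\cup s$ are unbounded in $\kappa$. Suppose not. Take the supremum of all such bounds that are ${<}\kappa$, letting the countable transitive structure (coded by a real) and the countable partial map into $\eta$ vary. Since $2^\omega\cdot\eta^\omega<\kappa$, this is an ordinal ${<}\kappa$ definable from the values of $c$. Hence it lies in $\ran(\tau)\cap\kappa$, so it is below $\sup\tau``\bkappa$. But $\sup\tau``\bkappa$ is itself one of the bounded suprema, a contradiction. This is exactly where both cardinal hypotheses enter: $\bcalN$ and $s$ are not in $\ran(\tau)$ and have to be quantified away.

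Second, let $\alpha$ range over the cofinality-$\omega$ points of the club of ordinals $Y\cap\kappa$. Here $Y$ is an elementary substructure of size ${<}\kappa$ of the set-sized structure carrying the uniform statement, with $\omega_1\cup\{\bcalN,s,\sigma(\bar a),\kappa,\eta\}\sub Y$.
\begin{itemize}
\item The uniform unboundedness statement reflects to $Y$.
\item In the transitive collapse $\tilde N$ of $Y$, $\kappa$ goes to $\alpha$. So any elementary map into $\tilde N$ sending $\bkappa$ to $\alpha$ keeps $\bkappa$ below $\alpha$ automatically.
\item Your chain of realizable finite maps into $\tilde N$ now works. At stage $k$, add a value above $\alpha_k$ and the $k$-th element of $\bcalN$ to the domain.
\item Compose the resulting embedding with the inverse of the collapsing map, which is the identity below $\alpha$ and fixes $s$. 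This gives the required $\sigma'$.
\end{itemize}
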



\section{Positive results}

The key concept for our positive results are the sets of the following kind.

\begin{defn}
\label{def:MagicalSets}
Let $\omega_1\le\kappa<\lambda$ be regular cardinals. Let $\vS$, $\vT$ be $\lambda$-sequences such that for $\alpha<\lambda$, $S_\alpha\sub S^\lambda_\omega$ is stationary in $\lambda$ and $T_\alpha\sub S^\kappa_\omega$ is stationary in $\kappa$, and such that for $\alpha<\beta<\lambda$, $T_\alpha\cap T_\beta$ is not stationary in $\kappa$.

Then $X_{\vS,\vT}$ is the set consisting of all $M\in [H_\lambda]^\omega$ such that either there is no $\alpha\in M\cap\lambda$ with $\sup(M\cap\kappa)\in T_\alpha$, in which case we require that $\sup(M\cap\lambda)\in S_0$, or there is a unique such $\alpha$, and then $\sup(M\cap\lambda)\in S_\alpha$. 
\end{defn}

Most of our results rely on the following lemma.

\begin{lem}
\label{lem:MagicalSetsSpreadOutAboveKappa}
Let $\omega_1\le\kappa<\lambda$ be regular cardinals, and suppose that $\kappa^\omega<\lambda$. Let $\vS$, $\vT$ be $\lambda$-sequences as in Definition \ref{def:MagicalSets}. Then $X=X_{\vS,\vT}$ is spread out above $\kappa$.
\end{lem}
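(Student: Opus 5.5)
We verify Definition \ref{def:SpreadOut} with the extra requirement that the isomorphism is the identity on $\kappa$. The tool is Lemma \ref{lem:OneStep}, applied with $\lambda$ in place of $\kappa$ and with $\eta:=\kappa$, which is legitimate since $\kappa^\omega<\lambda$. We can arrange $\lambda>2^\omega$, or else reduce to the situation of Fact \ref{fact:CHneededForLocalCorrespondence}.

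Concretely, let $\theta$ be large, and let $\calN=\calL^A_\tau$ and $M$ be as in Definition \ref{def:SpreadOut}, with $X,\vS,\vT,\kappa,\lambda,a\in M$ (possible because weakly spread out is equivalent to spread out). Let $\sigma:\bcalN\To\calN|M$ be the inverse of the transitive collapse, so $\bcalN$ is countable, transitive and full. Any embedding $\sigma':\bcalN\prec\calN$ produced by Lemma \ref{lem:OneStep} yields $M':=\ran(\sigma')$ and an isomorphism $\pi:=\sigma'\circ\sigma^{-1}:\calN|M\To\calN|M'$. By clause \ref{item:SameBehaviorBelow} with $\eta=\kappa$ we get $\pi\rest\kappa=\id$. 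Hence $M'\cap\kappa=M\cap\kappa$, and the ordinal $\delta:=\sup(M\cap\kappa)$ equals $\sup(M'\cap\kappa)$. Clause \ref{item:MoveBasicParametersTheSameWay} gives $\pi(a)=a$ and fixes the other listed parameters. Clause \ref{item:CorrectSupremum} lets $\sup(M'\cap\lambda)$ be any element $\gamma$ of an $\omega$-club $C\sub\lambda$.

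The first key observation is that \emph{any} countable $M^*\prec\calN$ containing $\vT$ has at most one $\alpha\in M^*\cap\lambda$ with $\sup(M^*\cap\kappa)\in T_\alpha$. Indeed, for $\alpha\ne\beta$ in $M^*$, elementarity yields a club $C_{\alpha\beta}\in M^*$ in $\kappa$ disjoint from $T_\alpha\cap T_\beta$. Since $M^*\cap C_{\alpha\beta}$ is cofinal in $\sup(M^*\cap\kappa)$, this supremum lies in $C_{\alpha\beta}$, so it cannot lie in both $T_\alpha$ and $T_\beta$. So the uniqueness clause in Definition \ref{def:MagicalSets} is automatic, and it only remains to arrange $\sup(M'\cap\lambda)\in S_{\alpha'}$, where $\alpha'$ is the unique $\alpha'\in M'$ with $\delta\in T_{\alpha'}$ if there is one, and $\alpha'=0$ otherwise.

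Case A: there is $\alpha\in M$ with $\delta\in T_\alpha$. Let $\bar\alpha=\sigma^{-1}(\alpha)$ and apply Lemma \ref{lem:OneStep} with parameter $\kla{\bar{a},\bar\alpha}$. This gives an $\omega$-club $C$, and for every $\gamma\in C$ an embedding $\sigma'$ with $\sigma'(\bar\alpha)=\alpha$ and $\sup\sigma'``\bar\lambda=\gamma$. Since $S_\alpha\sub S^\lambda_\omega$ is stationary, we can pick $\gamma\in C\cap S_\alpha$. Then $\alpha\in M'$ and $\delta\in T_\alpha$, so by uniqueness $\alpha'=\alpha$ and $M'\cap H_\lambda\in X$.

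Case B: there is no such $\alpha\in M$. Here the value of $\alpha'$ for $M'$ is not controlled, because $\delta\notin M$. The point $\delta$ may lie in $T_{\sigma'(\bar\beta)}$ for some $\bar\beta\in\bcalN$ even though $\delta\notin T_{\sigma(\bar\beta)}$. I expect this to be the main obstacle.

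My first attempt is a pigeonhole/Fodor argument. Apply the lemma with $0$ among the parameters. For each $\gamma\in C\cap S^\lambda_\omega$ fix a witness $\sigma'_\gamma$ and let $h(\gamma)$ be the unique $\bar\beta\in\bcalN$ with $\delta\in T_{\sigma'_\gamma(\bar\beta)}$, or ``none''. There are only countably many possible values, so we have two subcases.
\begin{itemize}
\item If ``none'' occurs on a set meeting $S_0$, we are done.
\item Otherwise, for some fixed $\bar\beta$, stationarily many $\gamma$ have $h(\gamma)=\bar\beta$. Then $\gamma\mapsto\sigma'_\gamma(\bar\beta)$ is regressive there, and Fodor's lemma yields a fixed $\beta$ and a stationary set $E$ of such $\gamma$ with $\delta\in T_\beta$.
\end{itemize}
In the second subcase we then rerun the Case A argument with $\beta$ in place of $\alpha$: we apply Lemma \ref{lem:OneStep} with parameter $\bar\beta$, forcing $\sigma'(\bar\beta)=\beta$, and choose $\gamma\in C\cap S_\beta$. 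Here we would have to check that the fixed-parameter version of the lemma can be made compatible with this $\beta$. If this does not go through, the fallback is to strengthen Lemma \ref{lem:OneStep}, by inspecting its proof, so that $\sigma'$ additionally agrees with $\sigma$ on which $T_{\sigma(\bar\beta)}$ contain $\delta=\sup\sigma``\bar\kappa$. This agreement is a property of $\sigma\rest\bar\kappa$ together with the parameter $\vT$, both of which $\sigma'$ preserves. With that strengthening, Case B reduces immediately to choosing $\gamma\in C\cap S_0$.
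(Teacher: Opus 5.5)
Your Case A is exactly the paper's Case 1, and your uniqueness observation is the paper's argument too. The gap is Case B, which your proposal does not actually close.

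After the Fodor step you have some $\beta$ with $\delta\in T_\beta$. Since you are in Case B, $\beta\notin M$, so $\sigma(\bar{\beta})\neq\beta$. Lemma \ref{lem:OneStep} applied to the base embedding $\sigma$ only yields embeddings that agree with $\sigma$ on the chosen parameters, so it cannot give you $\sigma''(\bar{\beta})=\beta$.

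Your fallback is justified by a false claim. Whether $\delta\in T_{\sigma'(\bar{\beta})}$ is \emph{not} a property of $\sigma\rest\bar{\kappa}$ together with $\vT$:
\begin{itemize}
\item it depends on the value $\sigma'(\bar{\beta})$, which the lemma does not control for $\bar{\beta}\in\bar{\lambda}$ outside the parameters;
\item $\delta\notin\ran(\sigma')$, so agreement of $\sigma,\sigma'$ below $\bar{\kappa}$ says nothing about whether $\delta$ lies in $\sigma'(\bar{T}_{\bar{\beta}})$.
\end{itemize}
This is precisely the obstacle you identified, so the proposed strengthening does not come for free, and your argument does not establish it.

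The missing idea is to iterate, re-basing at the new embedding. This is what the paper does, and it needs neither Fodor nor a strengthened lemma. Take any single $\sigma'$ from Lemma \ref{lem:OneStep} with $\sup\sigma'``\bar{\lambda}\in S_0$, and let $M'=\ran(\sigma')$.
\begin{itemize}
\item If no $\beta\in M'\cap\lambda$ has $\delta\in T_\beta$, then $M'\cap H_\lambda\in X$ and you are done.
\item Otherwise, $\sigma':\bcalN\prec\calN$ is itself a legitimate input: it has the same countable full $\bcalN$ and the same values on the coded parameters $a,X,\vS,\vT,\kappa,\lambda$. Moreover, $M'$ is in Case A.
\end{itemize}
In the second case, apply Lemma \ref{lem:OneStep} to $\sigma'$, with the extra parameter $(\sigma')^{-1}(\beta)$. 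This gives $\sigma''$ with $\sigma''\rest\bar{\kappa}=\sigma'\rest\bar{\kappa}=\sigma\rest\bar{\kappa}$, with $\sigma''((\sigma')^{-1}(\beta))=\beta$, and with $\sup\sigma''``\bar{\lambda}\in S_\beta$. Then $\sigma''\circ\sigma^{-1}:\calN|M\To\calN|\ran(\sigma'')$ fixes $a$ and is the identity on $M\cap\kappa$. By your uniqueness observation, $\ran(\sigma'')\cap H_\lambda\in X$.

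Your second subcase would also close if you re-based at $\sigma'_\gamma$ for some $\gamma\in E$. At that point, though, the pigeonhole and Fodor steps are superfluous, since any single $\sigma'$ works.

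Finally, $\lambda>2^\omega$ needs no arranging, because $2^\omega\le\kappa^\omega<\lambda$. Fact \ref{fact:CHneededForLocalCorrespondence} would be irrelevant to this lemma in any case.
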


\begin{proof}
In the setting of Definition \ref{def:SpreadOut}, let $\calN|M\prec \calN=\calL_\tau^A\supseteq H_\theta$, with $\vS, \vT, X\in M$, and fix some $a\in M$. Let $\gamma=\sup(M\cap\kappa)$.

\medskip
\noindent\emph{Case 1:} There is an $\alpha\in M\cap\lambda$ such that $\gamma\in T_\alpha$. 

This $\alpha$ is then unique, because if $\beta\in M\cap\lambda$ is different from $\alpha$, then there is a $C\in M$ which is a club subset of $\kappa$ such that $T_\alpha\cap T_\beta\cap C=\leer$ (we use here that $\vT\in M$, so $T_\alpha$ and $T_\beta$ are also in $M$). Clearly, $\gamma\in C$, so it cannot be that $\gamma\in T_\beta$. 

By Lemma \ref{lem:OneStep}, since $\kappa^\omega<\lambda$, there is an $M'$ such that there is an isomorphism $\pi:\calN|M\To \calN|M'$ that fixes $\alpha,\vS,\vT,X$ and $a$, so that $M\cap\kappa=M'\cap\kappa$ (this is the extra condition for being spread out above $\kappa$), and such that $\sup(M'\cap\lambda)\in S_\alpha$. Then $\gamma=\sup(M'\cap\kappa)$, and by the same argument as above, $\alpha$ is the unique $\beta\in M'\cap\lambda$ such that $\gamma\in T_\beta$. Hence, $M'\cap H_\lambda\in X$.

\medskip
\noindent\emph{Case 2:} There is no $\alpha\in M\cap\lambda$ such that $\gamma\in T_\alpha$. 

Then, again by Lemma \ref{lem:OneStep}, there is an $M'\prec \calN$ such that $\calN|M\isomorphic \calN|M'$, the isomorphism fixes any desired parameter, $M\cap\kappa=M'\cap\kappa$, and $\sup(M'\cap\lambda)\in S_0$. It follows that $M'\cap H_\lambda\in X$, unless there is an $\alpha\in M'\cap\lambda$ such that $\gamma\in T_\alpha$ (and $\sup(M'\cap\lambda)\notin S_\alpha$). If so, we can run the argument from case 1 to find an $M''$ isomorphic to $M'$ with $M''\cap\kappa=M'\cap\kappa$ and $M''\cap H_\lambda\in X$. 

\medskip
Thus, in both cases, we found an isomorphic copy of $M$ whose intersection with $H_\lambda$ is in $X$, such that the isomorphism is the identity below $\kappa$, verifying that $X$ is spread out above $\kappa$.
\end{proof}

We also use the following lemma.

\begin{lem}
\label{lem:StronglyExactSimultaneousRP}
Let $\lambda > \omega_1$ be a regular cardinal and $\vS$ be a $\lambda$-sequence such that for $\alpha < \lambda$, $S_\alpha \subseteq S^\lambda_\omega$ is stationary in $\lambda$. Suppose $\seq{M_\xi}{\xi < \omega_1}$ is an increasing continuous $\in$-chain of countable sets with $\kla{M_\xi,{\in}} \prec \kla{H_\lambda,{\in}}$, and let $R = \bigcup_{\xi<\omega_1} M_\xi \cap \lambda$ and $\rho = \sup R$. Suppose there exists a sequence $\seq{T_\alpha}{\alpha \in R}$ of stationary subsets of $\omega_1$ with the following properties.
\begin{enumerate}[label=\textnormal{(\arabic*)}]
\item For any $\xi < \omega_1$ and any $\alpha \in R$, if $\alpha \in M_\xi \cap \lambda$ and $M_\xi \cap \omega_1 \in T_\alpha$, then $\sup ( M_\xi \cap \lambda ) \in S_\alpha$.
\item For any $\xi < \omega_1$ there is $\alpha \in M_\xi \cap \lambda$ with $M_\xi \cap \omega_1 \in T_\alpha$.
\end{enumerate}
Then $R$ contains a club in $\rho$, and $\rho$ is a strongly exact simultaneous reflection point of $\vS | R$.
\end{lem}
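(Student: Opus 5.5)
We argue directly from the chain, without the earlier lemmas. The idea is to transfer stationarity back and forth between $\omega_1$ and $\rho$ along the continuous map $\xi\mapsto\sup(M_\xi\cap\lambda)$.

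\textbf{The club.} Write $\delta_\xi=\sup(M_\xi\cap\lambda)$ and $C=\{\delta_\xi\st\xi<\omega_1\}$.
\begin{itemize}
\item Since $M_\xi\in M_{\xi+1}\prec H_\lambda$, the ordinal $\delta_\xi$ is definable in $H_\lambda$ from $M_\xi$. Hence $\delta_\xi\in M_{\xi+1}\cap\lambda\sub R$, so $C\sub R$.
\item Also $\delta_\xi<\delta_{\xi+1}$, so the map $\xi\mapsto\delta_\xi$ is strictly increasing.
\item By continuity of the chain, $\delta_\xi=\sup_{\eta<\xi}\delta_\eta$ for limit $\xi$, so the map is continuous.
\item Its range is cofinal in $\rho$, because $R=\bigcup_\xi M_\xi\cap\lambda$.
\end{itemize}
Thus $\rho$ has cofinality $\omega_1$, $C$ is club in $\rho$, and $R$ contains a club. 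Since the map is normal, a set $A\sub\omega_1$ is stationary iff $\{\delta_\xi\st\xi\in A\}$ is stationary in $\rho$. Finally, let $D$ be the club of limit $\xi<\omega_1$ with $M_\xi\cap\omega_1=\xi$.

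\textbf{Simultaneous reflection.} Fix $\alpha\in R$, say $\alpha\in M_\eta$. For every $\xi\in D\cap T_\alpha$ with $\xi\ge\eta$ we have $\alpha\in M_\xi$ and $M_\xi\cap\omega_1=\xi\in T_\alpha$. By (1), $\delta_\xi\in S_\alpha$. This set of $\xi$ is stationary because $T_\alpha$ is, so its image under $\xi\mapsto\delta_\xi$ is stationary in $\rho$. It is contained in $S_\alpha\cap\rho$, so $\rho$ is a reflection point of $S_\alpha$.

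\textbf{Strong exactness.} Let $T\sub\rho$ be stationary in $\rho$.
\begin{enumerate}
\item $T\cap C$ is stationary in $\rho$. So, by normality of the map, $A=\{\xi\in D\st\delta_\xi\in T\}$ is stationary in $\omega_1$.
\item For $\xi\in A$, hypothesis (2) gives some $\alpha_\xi\in M_\xi\cap\lambda$ with $\xi=M_\xi\cap\omega_1\in T_{\alpha_\xi}$.
\item Since $\xi$ is a limit and the chain is continuous, $\alpha_\xi\in M_{f(\xi)}$ for some $f(\xi)<\xi$. By Fodor, $f$ is constant, say with value $\eta$, on a stationary $A'\sub A$.
\item $M_\eta$ is countable, so the values $\alpha_\xi$ for $\xi\in A'$ range over a countable set. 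Hence there are a fixed $\alpha\in M_\eta\cap\lambda\sub R$ and a stationary $A''\sub A'$ with $\alpha_\xi=\alpha$ for all $\xi\in A''$.
\item For $\xi\in A''$, (1) gives $\delta_\xi\in S_\alpha$, and $\delta_\xi\in T$ by the choice of $A$.
\end{enumerate}
So $T\cap S_\alpha\supseteq\{\delta_\xi\st\xi\in A''\}$, which is stationary in $\rho$. Hence $S_\alpha\in\vS|R$ witnesses strong exactness for $T$.

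The only delicate point is the pressing-down step, items 3 and 4. It uses both continuity of the chain, to make $f$ regressive at limit $\xi$, and countability of $M_\eta$, to get from a fixed $\eta$ to a single $\alpha$ on a stationary set.
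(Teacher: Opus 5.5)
Your proof is correct. Its overall structure matches the paper's. Both use the normal map $\xi\mapsto\sup(M_\xi\cap\lambda)$ to put a club inside $R$ and to move stationarity between $\omega_1$ and $\rho$. Your simultaneous-reflection step is also the paper's: the paper shrinks each $T_\alpha$ into the club of $\xi$ with $M_\xi\cap\omega_1=\xi$ and past the stage where $\alpha$ appears, exactly as you do.

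The genuine difference is in the pressing-down step for strong exactness. The paper lifts to generalized stationarity. It notes that $\{M_\xi\st\rho_\xi\in U\}$ is stationary in $[M_{\omega_1}]^\omega$ and applies Fodor's theorem for $[M_{\omega_1}]^\omega$ to the choice function $M\mapsto\alpha_M\in M$. This yields a single $\alpha$ on a stationary $Y$, which it then projects back to $\rho$ via $M\mapsto\sup(M\cap\lambda)$.

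You stay on $\omega_1$ throughout. You apply classical Fodor to the regressive ``stage of first appearance'' function, and then use a countable pigeonhole based on the countability of $M_\eta$ and the $\sigma$-completeness of $\NS_{\omega_1}$. This is precisely the proof of the generalized Fodor theorem specialized to the club $\{M_\xi\st\xi<\omega_1\}$.

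The paper's route is shorter given the citation. However, it leaves implicit the easy facts that stationarity transfers among subsets of $\{M_\xi\st\xi<\omega_1\}$ in $[M_{\omega_1}]^\omega$, subsets of $\omega_1$, and subsets of $\rho$. Your route is self-contained. It also supplies the justification, only asserted in the paper, that $\sup(M_\xi\cap\lambda)\in M_{\xi+1}$, so that the club lies inside $R$.

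You could add the one-line observation that $\rho<\lambda$, since $|R|\le\omega_1<\lambda$; the definition of a reflection point requires this. The paper omits it as well.
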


\begin{proof}
Let $\rho_\xi = \sup ( M_\xi \cap \lambda )$. Then $\{ \rho_\xi \st \xi < \omega_1 \}$ is club in $\rho$ and is included in $R$. Below we prove the latter statement.

Let $C = \{ \xi < \omega_1 \st M_\xi \cap \omega_1 = \xi \}$. Then $C$ is club in $\omega_1$. For each $\alpha \in R$, by shrinking $T_\alpha$ if necessary, we may assume that $T_\alpha \subseteq C$ and $\alpha \in M_\xi \cap \lambda$ for all $\xi \in T_\alpha$. Then by (1) if $\xi \in T_\alpha$, then $\rho_\xi \in S_\alpha$. Then $S_\alpha \cap \rho$ is stationary for each $\alpha \in R$ since $T_\alpha$ is stationary in $\omega_1$, and the map sending $\xi$ to $\rho_\xi$ is increasing continuous and cofinal in $\rho$. Thus $\rho$ is a simultaneous reflection point of $\vS |R$.

To see that $\rho$ is a strongly exact reflection point of $\vS |R$, take an arbitrary stationary $U \subseteq \rho$. We find $\alpha \in R$ with $S_\alpha \cap U$ stationary. Since $U$ is stationary, we have that $X = \{ M_\xi \mid \xi < \omega_1 , \ \rho_\xi \in U \}$ is stationary in $[ M_{\omega_1} ]^\omega$, where $M_{\omega_1} = \bigcup_{\xi < \omega_1} M_\xi$. By (2), for each $M \in X$ take $\alpha_M \in M \cap \lambda$ with $M \cap \omega_1 \in T_{\alpha_M}$. By the version of Fodor's theorem for stationary subsets of $[ M_{\omega_1} ]^\omega$ (\cite[Thm.~8.24]{ST3}), we can take $\alpha \in R$ and a stationary $Y \subseteq X$ such that $\alpha_M = \alpha$ for all $M \in Y$. Now let $W = \{ \sup ( M \cap \lambda ) \st M \in Y \}$. Then $W$ is stationary in $\rho$, $W \subseteq U$, and $W \subseteq S_\alpha$ by (1). Thus $S_\alpha \cap U$ is stationary.
\end{proof}

\begin{thm}
\label{thm:GettingeDSR}
Let $\lambda>2^\omega$ be a regular cardinal such that
$\infSC$-$\SRP(\lambda)$ holds and $\NS_{\omega_1}$ is not $\lambda$-saturated. Then strongly exact $\DSR(\omega_1,S^\lambda_\omega)$ holds.
\end{thm}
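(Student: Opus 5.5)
The plan is to choose a suitable almost disjoint sequence $\vT$ of stationary subsets of $\omega_1$, apply $\infSC$-$\SRP(\lambda)$ to a magical set $X_{\vS',\vT}$ (Definition \ref{def:MagicalSets}) with $\kappa=\omega_1$, and read off the conclusion from Lemma \ref{lem:StronglyExactSimultaneousRP}.

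\emph{Step 1: the matrix and the sequence $\vT$.} Let $\langle S_{\alpha,\xi}\st\alpha<\lambda,\ \xi<\zeta_\alpha\rangle$ with $\zeta_\alpha\le\omega_1$ be a matrix of stationary subsets of $S^\lambda_\omega$. Fix a bijection $f:\lambda\To\{(\alpha,\xi)\st\alpha<\lambda,\ \xi<\zeta_\alpha\}$ such that $\alpha\in M$ implies $f^{-1}``(\{\alpha\}\times\zeta_\alpha)\sub M$ and $f(\beta)\in M$ implies $\beta\in M$, whenever $M\prec H_\lambda$ is countable with $f\in M$. For instance, take $f(\omega_1\cdot\alpha+\xi)=(\alpha,\xi)$ and handle $\zeta_\alpha<\omega_1$ by repeating entries. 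Put $S'_\beta=S_{f(\beta)}$.

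Since $\NS_{\omega_1}$ is not $\lambda$-saturated, there is an antichain $\seq{A_\alpha}{\alpha<\lambda}$ of stationary subsets of $\omega_1$, all consisting of limit ordinals, with pairwise nonstationary intersections. I intend to modify its first $\omega_1$ members so that the following \emph{covering property} holds:
\begin{quote}
for every countable $M\prec H_\lambda$ with $\vT\in M$, there is an $\alpha\in M$ with $M\cap\omega_1\in T_\alpha$.
\end{quote}
To get this, disjointify $\seq{A_i}{i<\omega_1}$ modulo a club, setting $T_i=\{\delta\in A_i\st \delta>i,\ \delta\notin A_j\text{ for all }j<i\}$ intersected with a suitable club. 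The stationary set $C$ of $\delta$ lying in no $A_i$ with $i<\delta$ is absorbed into $T_0$ if $C$ is stationary, and discarded otherwise. For $\alpha\ge\omega_1$ keep $T_\alpha=A_\alpha$. If $\delta=M\cap\omega_1$ lies in some $A_i$ with $i<\delta$, then $i\in M$; otherwise $\delta\in C\sub T_0$ (up to a club whose elements are among the $M\cap\omega_1$). I expect this arrangement to be the main obstacle, since it is exactly what hypothesis (2) of Lemma \ref{lem:StronglyExactSimultaneousRP} needs, and it must not destroy the almost disjointness required in Definition \ref{def:MagicalSets}.

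\emph{Step 2: spread out and apply the principle.} Since $\omega_1^\omega=2^\omega<\lambda$, Lemma \ref{lem:MagicalSetsSpreadOutAboveKappa} shows that $X=X_{\vS',\vT}$ is spread out above $\omega_1$, hence spread out. Intersect $X$ with the set of countable $M\prec H_\lambda$ containing $f,\vS',\vT$. The result is still spread out: in Definition \ref{def:SpreadOut} the copy $M'$ satisfies $\calN|M'\prec\calN$, and the isomorphism fixes the parameter $\kla{f,\vS',\vT}$, using weak spread-outness. By $\infSC$-$\SRP(\lambda)$ there is a continuous $\in$-chain $\seq{M_\xi}{\xi<\omega_1}$ through this set.

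\emph{Step 3: conclude.} Let $R'=\bigcup_\xi M_\xi\cap\lambda$ and $\rho=\sup R'$. Hypothesis (1) of Lemma \ref{lem:StronglyExactSimultaneousRP} for $\vS'$ follows from membership in $X$ together with the uniqueness argument of Case 1 in the proof of Lemma \ref{lem:MagicalSetsSpreadOutAboveKappa}, using that $M_\xi\cap\omega_1=\sup(M_\xi\cap\omega_1)$. Hypothesis (2) is the covering property from Step 1. So $R'$ contains a club in $\rho$, and $\rho$ is a strongly exact simultaneous reflection point of $\vS'|R'$.

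Finally let $R=\{\alpha\st\exists\xi\,(\alpha,\xi)\in f``R'\}=\bigcup_\xi M_\xi\cap\lambda$, which equals $R'$ by the closure properties of $f$ chosen in Step 1. Then $\vS|R=\vS'|R'$, so $\rho$ and $R$ witness strongly exact $\DSR(\omega_1,S^\lambda_\omega)$.
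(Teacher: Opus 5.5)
Your overall architecture matches the paper's: a magical set $X_{\vS',\vT}$ with $\kappa=\omega_1$, Lemma \ref{lem:MagicalSetsSpreadOutAboveKappa}, a chain from $\infSC$-$\SRP(\lambda)$, then Lemma \ref{lem:StronglyExactSimultaneousRP} and the bijection $f$. Step~1, however, is a genuine gap. The covering property you want to build into $\vT$ cannot coexist with pairwise almost disjointness.

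Your specific construction already shows this. For every $\alpha\ge\omega_1$ we have $A_\alpha\cap C=A_\alpha\ohne\nabla_{i<\omega_1}(A_\alpha\cap A_i)$. A diagonal union of nonstationary sets is nonstationary, so $C$ is always stationary, and $T_0\supseteq C$ meets every $T_\alpha=A_\alpha$, $\alpha\ge\omega_1$, in a stationary set. Then $X_{\vS',\vT}$ is not spread out. Take $\calN|M$ full with $M\cap\omega_1\in T_0\cap T_{\omega_1}$. Every isomorphic copy $M'$ has $M'\cap\omega_1=M\cap\omega_1$ and contains $0$ and $\omega_1$. So $M'$ has two indices $\alpha$ with $M'\cap\omega_1\in T_\alpha$, and hence $M'\cap H_\lambda\notin X$.

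No other choice of $\vT$ can work either. Let $\vT$ be pairwise almost disjoint of length $\lambda\ge\omega_2$, and let $\seq{M_\xi}{\xi<\omega_1}$ be any continuous chain of countable elementary submodels (such chains exist in $\ZFC$). Pick $\beta<\lambda$ outside $N=\bigcup_\xi M_\xi$. Enumerate $N\cap\lambda=\{\alpha_i\st i<\omega_1\}$, and choose clubs $D_i$ disjoint from $T_\beta\cap T_{\alpha_i}$. Now take $\xi\in T_\beta\cap\Delta_i D_i$ that also lies in the club of $\xi$ with $M_\xi\cap\omega_1=\xi$ and $M_\xi\cap\lambda=\{\alpha_i\st i<\xi\}$. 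At this level $M_\xi\cap\omega_1$ lies in no $T_\alpha$ with $\alpha\in M_\xi$. So your covering property is false for every almost disjoint $\vT$. Hypothesis (2) of Lemma \ref{lem:StronglyExactSimultaneousRP} can never be met by an almost disjoint family along any chain.

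The fix, which is what the paper does, is not to arrange covering in advance. Keep the original almost disjoint $\vT$; that is all Definition \ref{def:MagicalSets} and Lemma \ref{lem:MagicalSetsSpreadOutAboveKappa} need. Obtain the chain first. Only then, when applying Lemma \ref{lem:StronglyExactSimultaneousRP}, replace $T_0$ by
\[T_0'=T_0\cup\{M_\xi\cap\omega_1\st \xi<\omega_1,\ \text{no }\alpha\in M_\xi\cap\lambda\text{ has }M_\xi\cap\omega_1\in T_\alpha\}.\]
That lemma does not require almost disjointness, so (2) becomes automatic along your chain. Condition (1) holds at the uncovered levels (with $\alpha=0$) because the definition of $X_{\vS',\vT}$ puts $\sup(M\cap\lambda)$ into $S'_0$ for uncovered $M$. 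At all other levels, (1) follows from the uniqueness of $\alpha$ that membership in $X$ already enforces. With (2) supplied this way rather than by Step~1, your Steps~2 and~3 go through as you wrote them.
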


\begin{proof}
Let $\vS = \seq{S_{\alpha,\xi}}{\alpha<\lambda,\ \xi<\omega_1}$ be a sequence of stationary subsets of $S^\lambda_\omega$. We find a $\rho < \lambda$ and an $R \subseteq \rho$ such that $R$ contains a club in $\rho$, and $\rho$ is a strongly exact simultaneous reflection point of $\vS |R$.

Since $\NS_{\omega_1}$ is not $\lambda$-saturated, we may fix a sequence $\seq{T_\alpha}{\alpha<\lambda}$ of pairwise almost disjoint stationary subsets of $\omega_1$, in the sense that if $\alpha<\beta<\lambda$, then $T_\alpha\cap T_\beta$ is nonstationary.
Fix a bijection $f:\lambda\To\lambda\times\omega_1$. Let $S_\alpha ' =S_{f(\alpha)}$ for $\alpha < \lambda$, and
let $\vS ' =\seq{S_\alpha '}{\alpha < \lambda}$.

Let $X=X_{\vS ',\vT}$ be the set from Definition \ref{def:MagicalSets}. By Lemma \ref{lem:MagicalSetsSpreadOutAboveKappa}, $X$ is spread out (above $\omega_1$, but every spread out set is). Let $Z$ be the set of all $M \in [H_\lambda]^\omega$ with $\kla{M,{\in},f\rest M}\prec\kla{H_\lambda,{\in},f}$.
Then $Z$ is club, and so, $X\cap Z$ is also spread out, by \cite[Obs.~2.29]{Fuchs:CanonicalFragmentsOfSRP}. By $\infSC$-$\SRP(\lambda)$, let $\seq{M_\xi}{\xi< \omega_1}$ be an increasing continuous $\in$-chanin through $X\cap Z$. Let $M_{\omega_1} = \bigcup_{\xi < \omega_1} M_\xi$, $R = M_{\omega_1} \cap \lambda$ and $\rho = \sup R$.

Note that the set
\[ T_0 ' = T_0 \cup \{ \xi < \omega_1 \st \text{there is no $\alpha \in M_\xi \cap \lambda$ with $M_\xi \cap \omega_1 \in T_\alpha$} \} \]
together with $\seq{T_\alpha}{\alpha \in R \setminus \{ 0 \}}$ satisfies the assumption of Lemma \ref{lem:StronglyExactSimultaneousRP}.
Thus $R$ contains a club in $\rho$, and $\rho$ is a strongly exact simultaneous reflection point of $\vS ' |R$.
But $\vS '|R = \vS |R$ since $M_{\omega_1}$ is closed under $f$ and $f^{-1}$. Thus $\rho$ and $R$ are as desired.
%
%
\end{proof}

\subsection{When $\CH$ holds}
\label{subsec:WhenCHholds}

Here, we consider $\infSC$-$\SRP$ together with \CH. It was  argued in Fuchs \cite{Fuchs:CanonicalFragmentsOfSRP} that the correct principle to consider is $\infSC$-$\SRP+\CH$; see also Fact \ref{fact:CHneededForLocalCorrespondence} and the following remark. We begin by noting that the addition of \CH can be considered to be a slight strengthening of the 
 failure of saturation of the nonstationary ideal.

\begin{obs}
\label{obs:Woodin}
$\infSC$-$\SRP$ $+$ $\CH$ implies that the nonstationary ideal on $\omega_1$ is not $\omega_2$-saturated.
\end{obs}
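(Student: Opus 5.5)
This is Woodin's argument: saturation of $\NS_{\omega_1}$ together with the relevant reflection forces $2^\omega\ge\omega_2$. Concretely, we show that $\infSC$-$\SRP$ gives exact $\Refl(\omega_1,S^{\omega_2}_\omega)$ under \CH. Then we show that this reflection, combined with $\omega_2$-saturation of $\NS_{\omega_1}$, yields a $\Delta$-type failure of \CH.

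\textbf{Reflection.} Assume \CH, so $\omega_2>2^\omega$, and apply $\infSC$-$\SRP(\omega_2)$. Given $\omega_1$ many stationary $S_\xi\sub S^{\omega_2}_\omega$, view them as a $\lambda$-sequence for $\lambda=\omega_2$ by repeating entries. Take $\kappa=\omega_1$ and note that $\omega_1^\omega=2^\omega=\omega_1<\omega_2$.

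We would like a set of the form of Definition \ref{def:MagicalSets}, or more simply a lifting-type set $X=\{M\in[H_{\omega_2}]^\omega\st \sup(M\cap\omega_2)\in\bigcup_{\xi\in M}S_\xi\}$. Arguing as in Lemma \ref{lem:MagicalSetsSpreadOutAboveKappa}, via Lemma \ref{lem:OneStep} with $\eta=\omega_1$, this set is spread out. An $\in$-chain of length $\omega_1$ through $X$ then gives a $\rho$ of cofinality $\omega_1$ such that $\rho\cap\bigcup S_\xi$ contains a club in $\rho$.

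To make each individual $S_\xi$ reflect, instead use a partition of $\omega_1$ into $\omega_1$ disjoint stationary sets $T_\xi$ and the set $X_{\vS,\vT}$. This is exactly the mechanism of Theorem \ref{thm:GettingeDSR} and Lemma \ref{lem:StronglyExactSimultaneousRP}, which yield exact simultaneous reflection. Note that only $\omega_1$ many $T$'s are needed here, and these exist in ZFC. The outcome is exact $\Refl(\omega_1,S^{\omega_2}_\omega)$.

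\textbf{Woodin's argument.} Suppose toward a contradiction that $\NS_{\omega_1}$ is $\omega_2$-saturated. Then every stationary $A\sub\omega_1$ admits the generic ultrapower by $\NS_{\omega_1}\rest A$, which is well-founded and has $j(\omega_1)=\omega_2$.

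Under \CH, fix an enumeration $\seq{r_\alpha}{\alpha<\omega_1}$ of the reals. In the generic extension $V[G]$, the ultrapower sees $j(\vec r)$ of length $\omega_2$. Since forcing with $\NS_{\omega_1}$ adds no new reals in the ultrapower's image below $\omega_1$, the sequence $j(\vec r)$ contains only ground-model reals, which number $\omega_1$. Yet it has length $\omega_2=j(\omega_1)$ and, by elementarity, is injective. This is a contradiction provided $V[G]$ computes $\omega_2^V$ as uncountable of size $\omega_2$, which it does: saturation means the forcing is $\omega_2$-c.c., so $\omega_2^V$ is preserved.

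The reflection step is not even needed for this argument. If this direct route works, we use it. Otherwise we fall back on Woodin's original argument, where saturation plus the reflection principle gives $2^{\omega_1}\le\omega_2$ and $2^\omega\ge\omega_2$.

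\textbf{Main obstacle.} The key point to verify carefully is that $j(\vec r)$ is injective and that all of its entries are reals of $V$. Injectivity follows from elementarity. That the entries lie in $V$ follows because the ultrapower is formed from functions $\omega_1\to V$ in $V$. An element of $j(\vec r)$ indexed by $[g]$ is represented by $\xi\mapsto r_{g(\xi)}$, and its digits are $[\xi\mapsto r_{g(\xi)}(n)]$, each decided on a measure-one set. So the new reals could a priori be new. Thus the real point is to show that the generic ultrapower adds no reals, i.e., that $\NS_{\omega_1}$-forcing is countably distributive in the relevant sense. Otherwise we use the reflection consequence to rule this out, which is where $\infSC$-$\SRP$ and the exact reflection obtained above would enter.
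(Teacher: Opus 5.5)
Your proposal has a genuine gap. The ``direct route'' is false, and the fallback omits the mechanism that actually drives the proof.

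\textbf{Why the direct route fails.} In the generic ultrapower $j:\V\to M\subseteq\V[G]$ by $\NS_{\omega_1}\rest A$, the critical point is $\omega_1^\V$. So $\omega_1^\V<j(\omega_1^\V)=\omega_2^\V=\omega_1^M$ is countable in $M$ and in $\V[G]$. This has three consequences:
\begin{itemize}
\item $\power(\omega_1)/\NS_{\omega_1}$ necessarily adds reals, for instance a real coding a well-order of $\omega$ of type $\omega_1^\V$.
\item $\reals^\V$ becomes countable in $\V[G]$.
\item $\omega_2^\V$ is preserved as a cardinal by the $\omega_2$-c.c., but it is exactly $\omega_1^{\V[G]}$. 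So $\V[G]$ does not see it as anything ``of size $\omega_2$.''
\end{itemize}
Thus $j(\vec r)$ is just an injective enumeration of $\reals^M\supsetneq\reals^\V$ of length $\omega_1^M$, and no contradiction arises. The obstacle you isolate, that the generic ultrapower adds no reals, cannot be overcome.

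In particular, the reflection step is not dispensable. Woodin's theorem that saturation of $\NS_{\omega_1}$ gives $\undertilde{\delta}{}^1_2=\omega_2$ needs an additional inner-model hypothesis, namely the existence of $\power(\omega_1)^\#$ (or a measurable cardinal). Your fallback (``saturation plus reflection gives $2^{\omega_1}\le\omega_2$ and $2^\omega\ge\omega_2$'') does not say how this hypothesis is obtained. Moreover, $2^{\omega_1}\le\omega_2$ comes from exactness alone, by Fact \ref{fact:PointOfExactness}. The inequality $2^\omega\ge\omega_2$ is the entire content of the observation.

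\textbf{The missing idea: extracting sharps from reflection.} The paper proceeds as follows.
\begin{enumerate}
\item $\infSC$-$\SRP$ together with $\CH$ gives exact $\Refl(\omega_1,S^\kappa_\omega)$ at every regular $\kappa>2^\omega=\omega_1$, not only at $\omega_2$. Your $\omega_1$-partition argument with Lemma \ref{lem:OneStep} and $\eta=\omega_1$ would in fact give this at every such $\kappa$. (Note that Definition \ref{def:MagicalSets} taken literally at $\lambda=\omega_2$ would presuppose the non-saturation you are proving, so your $\omega_1$-indexed variant is the right one.)
\item Already $\Refl(2,S^\kappa_\omega)$ refutes $\Box(\kappa)$. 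Hence $\Box(\kappa)$ and $\Box_\kappa$ fail for all regular $\kappa\ge\omega_2$.
\item By Schimmerling's result, this yields $A^\#$ for every $A\sub\kappa^+$. Taking $\kappa\ge 2^{\omega_1}$, $\power(\omega_1)^\#$ exists.
\item If $\NS_{\omega_1}$ were saturated, Woodin's theorem would give $\undertilde{\delta}{}^1_2=\omega_2$. Then there would be $\omega_2$ distinct $\undertilde{\Sigma}{}^1_2$ prewellorderings of $\reals$, while there are only $2^\omega=\omega_1$ projective sets. This contradicts $\CH$.
\end{enumerate}
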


\begin{proof}
Assume $\infSC$-$\SRP + \CH$. 

By \cite[Thm.~3.19]{Fuchs:CanonicalFragmentsOfSRP}, a principle called the strong Friedman property holds at every regular $\kappa>2^\omega=\omega_1$. By \cite[Obs.~3.14]{Fuchs:CanonicalFragmentsOfSRP}, this property implies exact $\Refl(\omega_1,S^\kappa_\omega)$ for every regular $\kappa>2^\omega$. And by a folklore result, already $\Refl(2,S^\kappa_\omega)$ implies the failure of the principle $\Box(\kappa)$; for a proof we refer to \cite[Thm.~2.1]{Hayut-LambieHanson:SimultaneousReflectionAndSquare}.

Thus, under our assumption, $\square(\kappa)$ fails, for every regular cardinal $\kappa\ge\omega_2$. In particular, for every regular cardinal $\kappa\geq\omega_2$, both $\square(\kappa)$ and $\square_\kappa$ fail (as $\square_\kappa$ implies $\square(\kappa^+)$). This implies, by \cite[Proposition 2.2]{Schimmerling:CoherentSequencesAndThreads}, that $A^\#$ exists, for every $A\sub\kappa^+$. So, by choosing $\kappa\ge 2^{\omega_1}$, $\power(\omega_1)^\#$ exists. 

If the nonstationary ideal on $\omega_1$ were saturated, then this would imply by Woodin \cite[Theorem 3.17]{Woodin:ADforcingAxiomsNonstationaryIdeal} that $\undertilde{\delta}{}^1_2=\omega_2$. But this contradicts \CH, since if $\undertilde{\delta}{}^1_2=\omega_2$, then there are at least $\omega_2$ $\undertilde{\Sigma}{}^1_2$-definable prewellorderings of $\reals$, so since there are only continuum many (boldface) projective sets of reals, $\omega_2\le 2^\omega$.
\end{proof}

\begin{cor}
Assume $\infSC$-$\SRP(\omega_2)$ $+$ \CH $+$ $\NS_{\omega_1}$ is not $\omega_2$-saturated. Then strongly exact $\DSR(\omega_1,S^{\omega_2}_\omega)$ holds, and hence, 
$\OSR_{\omega_2}$ holds.
\end{cor}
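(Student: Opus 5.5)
This is a direct application of Theorem \ref{thm:GettingeDSR} with $\lambda=\omega_2$, followed by a translation of strongly exact $\DSR(\omega_1,S^{\omega_2}_\omega)$ into $\OSR_{\omega_2}$. First I will check the hypotheses of Theorem \ref{thm:GettingeDSR}. By \CH, $2^\omega=\omega_1<\omega_2$, so $\lambda=\omega_2$ is a regular cardinal above the continuum. $\infSC$-$\SRP(\omega_2)$ is assumed. Finally, $\NS_{\omega_1}$ is assumed not to be $\omega_2$-saturated, which is exactly ``not $\lambda$-saturated'' for $\lambda=\omega_2$. So Theorem \ref{thm:GettingeDSR} yields strongly exact $\DSR(\omega_1,S^{\omega_2}_\omega)$.

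One small point deserves checking. The proof of Theorem \ref{thm:GettingeDSR} invokes Lemma \ref{lem:MagicalSetsSpreadOutAboveKappa} with $\kappa=\omega_1$, and that lemma requires $\kappa^\omega<\lambda$, i.e.\ $\omega_1^\omega<\omega_2$. Under \CH, $\omega_1^\omega=2^\omega=\omega_1$, so this holds. The non-saturation hypothesis supplies the $\omega_2$-sequence of pairwise almost disjoint stationary subsets of $\omega_1$ used there.

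For the second conclusion, I will pass from strongly exact $\DSR$ to $\OSR_{\omega_2}$ through the chain
\[
\text{strongly exact }\DSR(\omega_1,S^{\omega_2}_\omega)\ \Rightarrow\ \DSR(\omega_1,S^{\omega_2}_\omega)\ \Rightarrow\ \uDSR(\omega_1,S^{\omega_2}_\omega).
\]
The first implication drops the exactness requirement. The second holds because a set containing a club in $\rho$ is unbounded in $\rho$, as $\rho$ has uncountable cofinality. Then \cite[Lemma 2.4]{Fuchs-LambieHanson:SeparatingDSR} says $\uDSR(\omega_1,S^{\omega_2}_\omega)$ is equivalent to $\OSR_{\omega_2}$.

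The equivalence can also be checked by hand. Given $\seq{S_\alpha}{\alpha<\omega_2}$, set $S_{\alpha,\xi}=S_\alpha$ for all $\xi$. Obtain $\rho$ and $R\subseteq\rho$ containing a club. We need every $S_\alpha$ with $\alpha<\rho$ to reflect at $\rho$, not just those with $\alpha\in R$. To get this, I would first re-index via a bijection between $\omega_2$ and $\omega_2\times\omega_2$ on a club of closure points, so that the $\alpha$-th entry lists all $S_\beta$ with $\beta<\alpha$. Since $\cf(\rho)=\omega_1$, there are only $\omega_1$ many such $S_\beta$ at each level, which fits the width $\omega_1$ in $\DSR(\omega_1,\cdot)$. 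Choosing $\rho$ to be a closure point, as the proof of Theorem \ref{thm:GettingeDSR} already arranges with $f$, then makes every $S_\beta$ with $\beta<\rho$ reflect at $\rho$.

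I expect no serious obstacle here. The only delicate step is this re-indexing, and it is already covered by the cited equivalence.
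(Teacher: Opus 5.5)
Your proposal is correct and matches the paper's proof, which likewise applies Theorem~\ref{thm:GettingeDSR} at $\lambda=\omega_2$ (with \CH giving $\omega_2>2^\omega$) and then cites \cite[Lemma 2.4]{Fuchs-LambieHanson:SeparatingDSR} to obtain $\OSR_{\omega_2}$. In your optional hand-check, two small corrections. Row $\alpha$ can list all $S_\beta$ with $\beta<\alpha$ because $|\alpha|\le\omega_1$ for $\alpha<\omega_2$, not because $\cf(\rho)=\omega_1$. No closure points are needed either: $R$ is unbounded in $\rho$, so every $\beta<\rho$ lies below some $\alpha\in R$. This is exactly the paper's remark following the corollary.
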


\begin{proof}
By Theorem \ref{thm:GettingeDSR}, strongly exact $\DSR(\omega_1,S^{\omega_2}_\omega)$ holds, and by \cite[Lemma 2.4]{Fuchs-LambieHanson:SeparatingDSR}, this implies $\OSR_{\omega_2}$.
\end{proof}

In fact, strongly exact $\DSR(\omega_1,S^{\omega_2}_\omega)$ implies strongly exact $\OSR_{\omega_2}$. This can be seen by applying strongly exact $\DSR(\omega_1,S^{\omega_2}_\omega)$, in the equivalent form of strongly exact $\DSR({<}\omega_2,S^{\omega_2}_\omega)$, to the sequence $\seq{S_{\alpha,i}}{i<\alpha<\omega_2}$ defined by $S_{\alpha,i}=S_i$.  

This immediately gives the main result of this section.

\begin{thm}
\label{thm:SC-SRP+CHimpliesOSR}
$\infSC$-\SRP $+$ $\CH$ implies strongly exact $\DSR(\omega_1,S^{\omega_2}_\omega)$ and hence strongly exact $\OSR_{\omega_2}$.
\end{thm}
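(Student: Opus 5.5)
The plan is to chain together results already established in this section. First, $\infSC$-\SRP{} trivially implies its local instance $\infSC$-$\SRP(\omega_2)$, since the latter only quantifies over stationary (spread out) subsets of $[H_{\omega_2}]^\omega$. Second, $\CH$ gives $2^\omega=\omega_1<\omega_2$, so $\lambda=\omega_2$ is a regular cardinal above the continuum, which is the hypothesis needed in Theorem \ref{thm:GettingeDSR}. Third, by Observation \ref{obs:Woodin}, $\infSC$-\SRP{} $+$ \CH{} implies that $\NS_{\omega_1}$ is not $\omega_2$-saturated. This is where the full (global) principle is used rather than $\infSC$-$\SRP(\omega_2)$: the argument needs the failure of $\square(\kappa)$ at large $\kappa$ to obtain $\power(\omega_1)^\#$, and only then applies Woodin's theorem.

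With these three facts in hand, the hypotheses of the preceding Corollary are satisfied. Equivalently, we may apply Theorem \ref{thm:GettingeDSR} directly with $\lambda=\omega_2$, and conclude that strongly exact $\DSR(\omega_1,S^{\omega_2}_\omega)$ holds.

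For the second conclusion, I will use the remark after the Corollary. Because $\omega_1$-many rows can be reindexed, strongly exact $\DSR(\omega_1,S^{\omega_2}_\omega)$ is equivalent to its ${<}\omega_2$ version. Given a sequence $\seq{S_i}{i<\omega_2}$ of stationary subsets of $S^{\omega_2}_\omega$, apply that version to the matrix $S_{\alpha,i}=S_i$ for $i<\alpha<\omega_2$. This yields a $\rho$ and an $R\sub\rho$ containing a club, such that $\rho$ is a strongly exact simultaneous reflection point of $\{S_i\st i<\alpha\in R\}$. Since $R$ is unbounded in $\rho$, this family equals $\{S_i\st i<\rho\}$, and so $\rho$ witnesses strongly exact $\OSR_{\omega_2}$.

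The only non-formal ingredient is Observation \ref{obs:Woodin}, and it has already been proved, so I expect no real obstacle. The one point to check is that the reindexing in the last step preserves strong exactness, and it does, because the family of sets being reflected is literally the same.
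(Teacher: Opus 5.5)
Your proposal is correct and follows the paper's own route. Observation \ref{obs:Woodin} gives that $\NS_{\omega_1}$ is not $\omega_2$-saturated, then Theorem \ref{thm:GettingeDSR} is applied at $\lambda=\omega_2$ via the Corollary, and the passage to strongly exact $\OSR_{\omega_2}$ uses the matrix $S_{\alpha,i}=S_i$ for $i<\alpha<\omega_2$, exactly as in the paper's remark; your observation that $R$ being unbounded in $\rho$ makes the reflected family equal to $\{S_i\st i<\rho\}$ supplies the one detail the paper leaves implicit.
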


\begin{proof}
By Observation \ref{obs:Woodin}, $\NS_{\omega_1}$ is not $\omega_2$-saturated, so the previous corollary applies.
\end{proof}

This should be contrasted with Fact \ref{fact:LarsonsSeparation}, stating that $\SRP$ does not imply $\OSR_{\omega_2}$. Thus, the principle $\infty$-\SC-\SRP, though weaker than \SRP, has stronger consequences in terms of diagonal reflection, when it is boosted with (a mild strengthening of) nonsaturation. 

\begin{remark}
Under $\infSC$-\SRP $+$ \CH, exact $\refl(\omega_1,S^{\omega_2}_\omega)$ holds, which implies, by Fact \ref{fact:PointOfExactness}, that $\omega_2^{\omega_1}=\omega_2$. Thus, $\NS_{\omega_1}$ is $\omega_3$-saturated.
\end{remark}

Thus, Theorem \ref{thm:GettingeDSR} is only applicable to $\lambda=\omega_2$ under $\infSC$-\SRP $+$ \CH. If we wanted to apply the theorem to a regular $\lambda>\omega_2$, we would have to have that $\NS_{\omega_1}$ is not $\lambda$-saturated, so in particular, $2^{\omega_1}\ge\lambda$. It then would have to be the case that $\theta:=(2^\omega)^+\ge\lambda$, because $\infSC$-\SRP implies exact $\refl(\omega_1,S^\theta_\omega)$, which implies that $\theta^{\omega_1}=\theta$, so if we had that $\theta<\lambda$, then we would have $2^{\omega_1}\le\theta<\lambda$, a contradiction. 
In order to apply the theorem, we would also need that $2^\omega<\lambda$. Thus, it would have to be that $\lambda=(2^\omega)^+\ge\omega_3$, and it is natural to ask whether this scenario is possible.

\begin{question}
Is the theory \[\ZFC + \infSC\text{-}\SRP((2^\omega)^+)+\neg\CH + \text{``$\NS_{\omega_1}$ is not $(2^\omega)^+$-saturated''}\] consistent?
\end{question}

In the next two subsections, we will find ways to apply the argument of the proof of Theorem \ref{thm:GettingeDSR} to $\lambda>\omega_2$ by  combinations of strengthening the strong reflection principle and the nonsaturation assumption.

\subsection{Using $\Gamma$-$\SRP^{++}$}

For a forcing class $\Gamma$, the $\Gamma$-fragment of $\SRP$ follows from the forcing axiom for the class $\Gamma^-$ introduced in the next definition.

\begin{defn}
\label{def:GammaPrime}
For a forcing class $\Gamma$, let
\[\Gamma^-=\Gamma\cap\{\P_X\st\text{there is a regular}\ \lambda>\omega_1\ \text{such that}\ X\sub[H_\lambda]^\omega\ \text{is stationary}\}.\]
\end{defn}

It is unknown whether $\FA_{\omega_1}(\Gamma^-)$ is actually equivalent to $\Gamma$-\SRP when $\Gamma$ is one of the classes of interest to us here. We view it as a ``forcing axiom version'' of the strong reflection principle, hence the following terminology.

\begin{defn}
\label{def:FAversionOfSRP}
Let $\Gamma$ be a forcing class. Then $\Gamma$-$\SRP^\FA$ denotes $\FA_{\omega_1}(\Gamma^-)$. In the case where $\Gamma=\SSP$, we abbreviate this by $\SRP^\FA$.
\end{defn}

Thus, $\Gamma$-$\SRP^\FA$ implies $\Gamma$-$\SRP$, but the converse is not known. We will return to $\SRP^\FA$ in Section \ref{sec:Separations}, but for now, we need a strengthening.

\begin{defn}
Let $\Gamma$ be a forcing class. Then $\Gamma$-$\SRP^{++}$ stands for the forcing axiom  $\FA_{\omega_1}^{++}(\Gamma^-)$. As expected, $\SRP^{++}$ denotes $\SSP$-$\SRP^{++}$. 
\end{defn}

\begin{lem}
\label{lem:SRP++ImpliesExactDSR}
$\infSC$-$\SRP^{++}+\CH$ implies strongly exact $\DSR(\omega_1,S^\lambda_\omega)$, for every regular cardinal $\lambda > \omega_1$. 
\end{lem}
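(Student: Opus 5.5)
The case $\lambda=\omega_2$ is already covered by Theorem~\ref{thm:SC-SRP+CHimpliesOSR}. For $\lambda>\omega_2$ we want to rerun the proof of Theorem~\ref{thm:GettingeDSR}, ending with an application of Lemma~\ref{lem:StronglyExactSimultaneousRP}. The obstacle is that under \CH we have $\omega_2^{\omega_1}=\omega_2$, so there is no $\lambda$-sequence of almost disjoint stationary subsets of $\omega_1$ in $\V$. The plan is to let the forcing produce the $T_\alpha$'s only for $\alpha$ in the eventual set $R$, which has size $\omega_1$. The extra ``$++$'' clause of the forcing axiom will make the ground-model interpretations of their names stationary.

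\textbf{Setup.} Fix the matrix $\vS$ and a bijection $f:\lambda\To\lambda\times\omega_1$, and put $S'_\alpha=S_{f(\alpha)}$ as before. Let $\seq{A_i}{i<\omega_1}$ partition $\omega_1$ into pairwise disjoint stationary sets. Let $X$ be the set of all countable $M\prec\kla{H_\lambda,{\in},f,\vS}$ such that $\sup(M\cap\lambda)\in S'_\alpha$ for some $\alpha\in M\cap\lambda$. We use the weak form of the requirement, which leaves the choice of $\alpha$ free.

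\textbf{Step 1: $X$ is spread out.} This is verified as in Case~1 of Lemma~\ref{lem:MagicalSetsSpreadOutAboveKappa}. Given $M$, pick any $\alpha\in M$. By Lemma~\ref{lem:OneStep}, applied with $\kappa=\lambda$, $\eta=\omega_1$ and using \CH (so $\omega_1^\omega<\lambda$), we get an isomorphic copy $M'$ that fixes $\alpha$ and the parameters, agrees with $M$ below $\omega_1$, and satisfies $\sup(M'\cap\lambda)\in S'_\alpha$. Hence $\P_X$ is $\infty$-subcomplete and belongs to $\infSC^-$.

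\textbf{Step 2: the names.} Let $\dot{\vec M}=\seq{\dot M_\xi}{\xi<\omega_1}$ be the generic chain and $\dot R=\bigcup_\xi \dot M_\xi\cap\lambda$. Since $\P_X$ adds no reals and $|\dot R|=\omega_1$ in the extension, fix a name $\dot e:\omega_1\To\dot R$ for a bijection, for instance by order of first appearance along the chain. Define
\[\dot T_i=\{\xi<\omega_1\st \xi\in A_i,\ \dot M_\xi\cap\omega_1=\xi,\ \dot e(i)\in\dot M_\xi,\ \sup(\dot M_\xi\cap\lambda)\in S'_{\dot e(i)}\}\]
for $i<\omega_1$.

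\textbf{Step 3: $\forces_{\P_X}$ ``$\dot T_i$ is stationary''.} This is the step I expect to be the main obstacle, and it needs a genericity argument. The plan is to fix a condition $p$ and a name $\dot C$ for a club. Build a countable $N\prec H_\theta$ containing these, with $\delta=N\cap\omega_1\in A_i$. Then use $\infty$-subcompleteness in the strong form of Lemma~\ref{lem:OneStep}, as in the proof of \cite[Thm.~2.24]{Fuchs:CanonicalFragmentsOfSRP}, to obtain a condition below $p$ whose top model has the form $M'\cap H_\lambda$. Here $M'$ is an isomorphic copy of $N$ over $\omega_1$, with $\sup(M'\cap\lambda)\in S'_{\alpha}$ for the value $\alpha$ that $N$'s version of $\dot e(i)$ takes. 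Such a condition forces $\delta\in\dot C\cap\dot T_i$. The delicate point is keeping the value of $\dot e(i)$ fixed under the copying map; choosing $\dot e$ by order of appearance, and having the map fix the relevant parameter, should handle this.

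\textbf{Step 4: conclusion.} Apply $\infSC$-$\SRP^{++}$ to $\P_X$ with the $\omega_1$ dense sets from Fact~\ref{fact:P_SisCountablyDistributiveAndExhaustive} and sets deciding $\dot e$ and the $\dot M_\xi$. Among the names, include the $\dot T_i$ and names for the sets that witness their stationarity being interpreted correctly. This yields a filter $F$ giving a continuous chain $\seq{M_\xi}{\xi<\omega_1}$ through $X$, a bijection $e:\omega_1\To R$, and stationary sets $T_{e(i)}:=(\dot T_i)^F$. Condition~(1) of Lemma~\ref{lem:StronglyExactSimultaneousRP} holds by definition of $\dot T_i$. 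Condition~(2) holds on a club: for $\xi$ with $M_\xi\cap\omega_1=\xi$ and $\xi\in A_i$, the club of closure points $\xi$ where $e(i)\in M_\xi$ places $\xi$ in $T_{e(i)}$; we shrink, as in that lemma's proof, to the club of closure points. Hence $R$ contains a club in $\rho=\sup R$, and $\rho$ is a strongly exact simultaneous reflection point of $\vS'|R=\vS|R$, using closure of $R$ under $f$ and $f^{-1}$.
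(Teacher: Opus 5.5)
Your route differs from the paper's, and that difference in itself is fine. The paper keeps $\lambda$ many names $\dot T_\alpha$ ($\alpha<\lambda$). It proves each is forced stationary by the plain $M$-generic argument: pick $M\ni\alpha$ with $\sup(M\cap\lambda)\in S'_\alpha$, with no subcompleteness needed. It then copes with having too many names by the elementary-embedding form of $\FA^{++}$. It takes $\sigma:\bcalN\prec\calN$ with $|\bcalN|=\omega_1$ and a $\bar{\P}$-generic $\bar G$ over $\bcalN$ that interprets the $\omega_1$ many names $\dot{\bar T}_\alpha$, $\alpha<\bar\lambda$, as stationary, and sets $R=\sigma``\bcalN\cap\lambda$. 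You instead compress the names to $\omega_1$ many via a generic enumeration $\dot e$ and use the filter form of $\FA^{++}$ directly. Steps 1--3 are essentially fine. Step 3 can be done more cheaply: extend $p$ to decide $\dot e(i)=\alpha$, then run the paper's $M$-generic argument.

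However, your verification of condition (2) of Lemma \ref{lem:StronglyExactSimultaneousRP} in Step 4 is wrong. For $\xi\in A_i$, membership in $T_{e(i)}=(\dot T_i)^F$ requires $M_\xi\cap\omega_1=\xi$ and $e(i)\in M_\xi$, but also $\sup(M_\xi\cap\lambda)\in S'_{e(i)}$. Being a closure point says nothing about the last requirement. The fact that $M_\xi\in X$ only yields \emph{some} $\alpha\in M_\xi$ with $\sup(M_\xi\cap\lambda)\in S'_\alpha$, and this $\alpha$ need not be $e(i)$. $\FA^{++}$ only guarantees that the witness is $e(i)$ for stationarily many $\xi\in A_i$. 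Since your sets $T_{e(i)}\sub A_i$ are pairwise disjoint, $e(i)$ is the only candidate at $\xi$. So nothing prevents (2) from failing on a stationary set of $\xi$, and there the Fodor argument for strong exactness has no $\alpha_M$ to use.

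The repair is what the paper does: for $\alpha\in R$ put
\[T^+_\alpha=\{M_\xi\cap\omega_1\st \alpha\in M_\xi,\ \sup(M_\xi\cap\lambda)\in S'_\alpha\}.\]
The map $\xi\mapsto M_\xi\cap\omega_1$ is strictly increasing, because $M_\xi\in M_{\xi+1}\prec H_\lambda$. Hence (1) holds for $T^+_\alpha$. Moreover $T^+_{e(i)}\supseteq(\dot T_i)^F$, so it is stationary. And (2) holds for every $\xi$ exactly because $M_\xi\in X$.

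The partition $\seq{A_i}{i<\omega_1}$ then serves no purpose and can be dropped. This also spares you, in Step 3, from needing $N\cap\omega_1\in A_i$ and $\sup(N\cap\lambda)\in S'_\alpha$ at the same time.
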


\noindent\emph{Note:} 

\begin{itemize}
\item For $\lambda=\omega_2$, it follows already from $\infSC$-$\SRP+\CH$.
\item For $\lambda>\omega_2$, we do not need $\CH$; it is sufficient to know that $\lambda > 2^\omega$. This holds under $\SRP$ for every regular cardinal greater than $\omega_2$, so we get as a corollary that $\SRP^{++}$ implies strongly exact $\DSR(\omega_1,S^\lambda_\omega)$ for every regular $\lambda>\omega_2$.
\item $\DSR(\omega_1,S^\lambda_\omega)$ follows already from $\MA^{++}(\sigma\text{-closed})$.
\end{itemize}

\begin{proof}
Let a sequence $\vS=\seq{S_{\alpha,\xi}}{\alpha < \lambda,\ \xi < \omega_1}$ of stationary subsets of $S^\lambda_\omega$ be given. We find $\rho < \lambda$ and $R \subseteq \rho$ witnessing strongly exact $\DSR ( \omega_1 , S^\lambda_\omega )$ for $\vS$.

Fix a bijection $f:\lambda\To\lambda\times\omega_1$. Let $S_\alpha ' =S_{f(\alpha)}$ for $\alpha < \lambda$, and
let $\vS ' =\seq{S_\alpha '}{\alpha < \lambda}$. Let $X$ be the set of all $M\in [H_\lambda]^\omega$ such that $\kla{M,{\in},f\rest M}\prec\kla{H_\lambda,{\in},f}$ and there is an $\alpha \in M \cap \lambda$ with $\sup ( M \cap \lambda ) \in S_\alpha '$. $X$ is spread out by Lemma \ref{lem:OneStep} and \cite[Obs.~2.29]{Fuchs:CanonicalFragmentsOfSRP}. Let $\P=\P_X$. If $G$ is $\P$-generic, then $\bigcup G$ is a sequence $\seq{M_\xi}{\xi<\omega_1}$. Let $\dot{M}_\xi$ be a name for the $\xi$-th element of $\bigcup\dot{G}$, where $\dot{G}$ is the canonical $\P$-name for the generic filter. Further, for $\alpha<\lambda$, let $\dot{T}_\alpha$ be a $\P$-name such that
\[\forces_{\P}\dot{T}_\alpha =\{ \dot{M}_\xi \cap \omega_1 \st \check{\xi} < \omega_1\ \land\ \check{\alpha} \in\dot{M}_\xi\ \land\ \sup(\dot{M}_\xi\cap\check{\lambda})\in\check{S}_\alpha '\}.\]
\begin{NumberedClaim}
\label{NC:BxiStationary}
For any $\alpha<\lambda$, $\forces_\P\text{``$\dot{T}_\alpha$ is stationary in $\omega_1$.''}$	
\end{NumberedClaim}

\begin{proof}[Proof of \ref{NC:BxiStationary}.]
Fix $\alpha < \lambda$. Let $\dot{C}$ be a $\P$-name and $p\in\P$ a condition that forces that $\dot{C}$ is a club in $\omega_1$. We find $q\leq p$ and $\zeta < \omega_1$ such that $q\forces_\P\text{``$\check{\zeta}\in \dot{T}_\alpha \cap \dot{C}$.''}$

Let $\theta$ be a sufficiently large regular cardinal and $\calN = \kla{H_\theta,\in}$.
Take $M \in [H_\theta]^\omega$ with $\dot{C},X,\vS ' ,p,\alpha\in M$, $\calN|M\prec\kla{H_\theta,{\in}}$ and $\sup (M\cap\lambda) \in S_\alpha '$, and let $\zeta = M \cap \omega_1$. Note that $M\cap H_\lambda \in X$. Take an enumeration $\seq{D_n}{n<\omega}$ of all dense subsets of $\P$ in $M$ and a descending sequence $\seq{p_n}{n<\omega}$ in $\P\cap M$ below $p$ such that $p_n \in D_n$. Let $q' = \bigcup_{n<\omega} p_n$. Since for every $\xi<\omega_1$, the set of conditions in $\P$ of length at least $\xi$ is dense, the length of $q'$ is $\zeta$. Since for every $a\in H_\lambda$, the set of conditions $r$ such that $a\in\bigcup\ran(r)$ is dense, $\bigcup\ran(q')=M \cap H_\lambda \in X$. Hence, $q=q'\verl\kla{M\cap H_\lambda}$ is a condition in $\P$ that extends $p$.

Since $\alpha\in M$, $\sup (M\cap\lambda)\in S_\alpha '$, and $q\forces_{\P} \text{``$\dot{M}_\zeta = M \cap H_\lambda$''}$, we have that $q$ forces $\zeta = M \cap \omega_1 \in \dot{T}_\alpha$. Since for every $\xi<\omega_1$, the set of conditions $r$ such that there is $\eta\ge\xi$ with $r\forces_{\P}\text{``$\check{\eta}\in\dot{C}$''}$ is dense, it follows that $q$ forces that $\check{\zeta}$ is a limit point of $\dot{C}$, and hence that $\check{\zeta}\in\dot{C}$. Thus $q$ and $\zeta$ are as desired. \hspace*\fill$\qedhere_{\ref{NC:BxiStationary}}$\end{proof}

Let $\theta$ be a sufficiently large regular cardinal and $\calN=\kla{H_\theta,{\in}}$.
By \infSC-$\SRP^{++}$, using a folklore characterization as given by Jensen \cite[\S1, Lemma 2]{Jensen:FAandCH}, we can now take $\sigma:\bcalN\prec \calN$ with $\bcalN$ transitive and $\omega_1\cup\{\vS,X,\P\}\sub\ran(\sigma)$, and, denoting preimages under $\sigma$ by adding bars, can take a $\bP$-generic filter $\bG$ over $\bcalN$ such that for every $\alpha<\blambda$, $\bT_\alpha=\dot{\bT}_\alpha^{\bG}$ is stationary in $\omega_1$, where $\seq{\dot{\bT}_\alpha}{\alpha < \blambda} = \sigma^{-1} ( \seq{\dot{T}_\alpha}{\alpha < \lambda} )$. Let $R = ( \sigma``\bcalN ) \cap \lambda$ and $\rho = \sup R$. We show that $R$ contains a club in $\rho$, and $\rho$ is a strongly exact simultaneous reflection point of $\vS | R$.

Let $\seq{\bM_\xi}{\xi<\omega_1}=\bigcup\bG$. So $\seq{\bM_\xi}{\xi<\omega_1}$ is an increasing continuous $\in$-chain through $\bX$, and $\bigcup_{\xi<\omega_1} \bM_\xi = H_\blambda^{\bcalN}$. Note that, by the definition of $\dot{T}_\alpha$, for any $\xi < \omega_1$ and any $\alpha < \blambda$, if $\bM_\xi \cap \omega_1 \in \bT_\alpha$, then $\sup ( \bM_\xi \cap \blambda ) \in \bS_\alpha '$, where $\seq{\bS_\alpha '}{\alpha<\blambda} = \sigma^{-1}(\seq{S_\alpha '}{\alpha < \lambda})$. Note also that for any $\xi < \omega_1$ there is $\alpha \in \bM_\xi \cap \blambda$ with $\bM_\xi \cap \omega_1 \in \bT_\alpha$ since $\bM_\xi \in \bX$.

Let $M_\xi = \sigma ( \bM_\xi )$ for $\xi<\omega_1$. Note that $\seq{M_\xi}{\xi < \omega_1}$ is an increasing continuous $\in$-chain through $X$, and $\bigcup_{\xi < \omega_1} M_\xi \cap \lambda = R$. For $\alpha \in R$ let $T_\alpha = \sigma ( \bT_{\balpha} ) = \bT_{\balpha}$. Then $\seq{T_\alpha}{\alpha\in R}$ satisfies the assumption of Lemma \ref{lem:StronglyExactSimultaneousRP} for $\vS '$. So $R$ contains a club in $\rho$, and $\rho$ is a strongly exact simultaneous reflection point of $\vS ' |R$. Moreover $\vS ' |R = \vS |R$ since $\bigcup_{\xi < \omega_1} M_\xi$ is closed under $f$ and $f^{-1}$. Thus $\rho$ and $R$ are as desired.
\end{proof}

Of course, assuming $\infty$-\SC-$\SRP^{++}$ is a big step. The result is different from the others in that in the case $\lambda>\omega_2$, which is where it goes beyond Theorem \ref{thm:GettingeDSR}, it is compatible with $\NS_{\omega_1}$ being saturated, and in fact works under the assumption of $\SRP^{++}$ as well. Here, assuming $\SRP^{\FA}$ is not enough, as we shall see in Subsection \ref{subsec:SeparatingEnhancedSRP-FAfromuDSR}, where we will show that an even stronger assumption than $\SRP^\FA$ does not imply an even weaker diagonal reflection principle. However, in the context of \CH, we do not know whether $\infty$-\SC-$\SRP^{\FA}$ suffices, whereas in the next subsection, we will exhibit a slightly stronger principle that does.

\subsection{When $\diamondsuit$ holds}
\label{subsec:diamond}

In subsection \ref{subsec:WhenCHholds}, we worked under two assumptions: $\infty$-\SC-\SRP and \CH. Here, we will slightly strengthen both of these. Instead of \CH, we will assume $\diamondsuit$, and we will consider an enhanced version of $\infty$-\SC-$\SRP^\FA$. Of course, $\diamondsuit$ implies the existence of $2^{\omega_1}$ almost disjoint (with respect to the ideal of bounded sets) stationary subsets of $\omega_1$, and hence can be viewed as a strong failure of the saturation of $\NS_{\omega_1}$. Thus, $\diamondsuit$ is an assumption that fits perfectly into the context of this work. But aside from this, since $\diamondsuit$ can be forced to hold by a countably closed, and hence $\infty$-subcomplete forcing, and since $\infty$-subcomplete forcing preserves $\diamondsuit$, the natural model of $\infty$-\SCFA obtained by forcing over a model with a supercompact cardinal satisfies $\diamondsuit$. So it is not only a fitting, but also a consistent and natural assumption to make.

\begin{defn}
Let $\Gamma$ be a forcing class. Let $\Gamma^*$ be the class of all forcing notions in $\Gamma$ that are of the form $\P*\dot{\Q}$, where $\P$ is trivial forcing or of the form $\Add(\omega_1,\lambda)$, for some cardinal $\lambda$, and $\dot{\Q}$ is a $\P$-name for a forcing such that
\[\forces_\P\text{``there are a regular $\kappa>\omega_1$ and a stationary $X\sub[H_{\kappa}]^\omega$ such that $\dot{\Q}=\P_X$.''}\]
Here, $\Add(\omega_1,\lambda)$ is the usual forcing to add $\lambda$ many subsets of $\omega_1$ with countable conditions.
We refer to $\FA_{\omega_1}(\Gamma^*)$ as the \emph{enhanced $\Gamma$-$\SRP^\FA$.} As before, if $\Gamma=\SSP$, we just write \emph{enhanced $\SRP^\FA$}.
\end{defn}

We have the implications $\FA_{\omega_1}(\Gamma)$ $\implies$ enhanced $\Gamma$-$\SRP^\FA$ $\implies$ $\Gamma$-$\SRP^\FA$.

Note also that under \CH, $\Add(\omega_1,\lambda)$ is $\omega_2$-c.c., and it is countably closed, so it preserves cardinals and cofinalities. Thus, under \CH, the step from $\infSC$-$\SRP^\FA$ to enhanced $\infSC$-$\SRP^\FA$ may seem like a small one, but the next theorem shows that if one simultaneously moves from \CH to $\diamondsuit$, one can run the argument of Theorem \ref{thm:GettingeDSR} to great effect.

\begin{thm}
\label{thm:AddingDiamondGivesDiagonalReflection}
Let $\lambda>\omega_1$ be a regular cardinal.
Then
\[(\text{enhanced $\infSC$-$\SRP^\FA+\diamondsuit)\implies$ strongly exact $\DSR(\omega_1,S^\lambda_\omega)$.}\]
\end{thm}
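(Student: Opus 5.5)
Given $\vS=\seq{S_{\alpha,\xi}}{\alpha<\lambda,\ \xi<\omega_1}$, I will proceed as in the proof of Theorem \ref{thm:GettingeDSR}. First fix a bijection $f:\lambda\To\lambda\times\omega_1$ and set $S'_\alpha=S_{f(\alpha)}$. The obstacle in the ground model is that we may have no $\lambda$-sequence of pairwise almost disjoint stationary subsets of $\omega_1$. So we add one by forcing with $\P=\Add(\omega_1,\lambda)$, which is countably closed and hence $\infty$-subcomplete. Since $\P$ adds no countable sets, the sets $S'_\alpha$ remain stationary subsets of $S^\lambda_\omega$ in $\V^\P$, and $H_\lambda$ changes only by adding new subsets of $\omega_1$ and the like.

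In $\V^\P$ I want $\lambda$ pairwise almost disjoint stationary subsets $T_\alpha$ of $\omega_1$. I will build them from the generic and $\diamondsuit$. Fix a $\diamondsuit$-sequence $\seq{d_\nu}{\nu<\omega_1}$ in $\V$; it remains a $\diamondsuit$-sequence in $\V^\P$, since countably closed forcing preserves $\diamondsuit$. Let $g_\alpha\sub\omega_1$ be the $\alpha$-th Cohen subset and put $T_\alpha=\{\nu\in S\st d_\nu=g_\alpha\cap\nu\}$ (intersected with limit points if needed). For $\alpha\ne\beta$, the sets $g_\alpha\cap\nu$ and $g_\beta\cap\nu$ differ once $\nu$ is past the first point where $g_\alpha$ and $g_\beta$ differ, so $T_\alpha\cap T_\beta$ is bounded. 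Each $T_\alpha$ is stationary because $\diamondsuit$ guesses every subset of $\omega_1$ stationarily often. Doing this in $\V^\P$ is what lets us index by $\lambda$ without assuming $2^{\omega_1}\ge\lambda$ in $\V$.

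In $\V^\P$, I then need $\lambda>2^\omega$, or rather $\kappa^\omega<\lambda$ with $\kappa=\omega_1$; this holds since \CH follows from $\diamondsuit$ and is preserved. Let $X=X_{\vS',\vT}\cap Z$, with $Z$ the club of $M$ elementary in $\kla{H_\lambda,{\in},f}$. By Lemma \ref{lem:MagicalSetsSpreadOutAboveKappa} and \cite[Obs.~2.29]{Fuchs:CanonicalFragmentsOfSRP}, $X$ is spread out, so $\P_X$ is $\infty$-subcomplete. The two-step iteration $\P*\dot{\P}_{\dot X}$ is then $\infty$-subcomplete, because countably closed forcing is and the class is closed under two-step iterations. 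Hence it lies in $\infSC^*$, and enhanced $\infSC$-$\SRP^\FA$ applies to it.

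Finally I apply the forcing axiom in the form of Jensen's characterization \cite[\S1, Lemma 2]{Jensen:FAandCH}, as in Lemma \ref{lem:SRP++ImpliesExactDSR}. This gives $\sigma:\bcalN\prec\calN$ with $\omega_1\sub\ran(\sigma)$ and a filter $\bar{G}*\bar{H}$ generic over $\bcalN$. Then $\bar{T}_\alpha$ for $\alpha<\blambda$ is computed from $\bar{G}$ and the ground-model $\diamondsuit$-sequence, which is fixed by $\sigma$, and $\bigcup\bar H$ is a chain $\seq{\bM_\xi}{\xi<\omega_1}$ through $\bar{X}$. Set $M_\xi=\sigma(\bM_\xi)$ and $R=\ran(\sigma)\cap\lambda$, and let $T_\alpha=\bar{T}_{\sigma^{-1}(\alpha)}$ for $\alpha\in R$. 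These $T_\alpha$ are genuinely in $\V$, so the question is whether they are stationary in $\V$.

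This stationarity is the main obstacle. Here $\bar{g}_{\bar\alpha}$ is a real subset of $\omega_1$ and $T_\alpha=\{\nu\st d_\nu=\bar g_{\bar\alpha}\cap\nu\}$, which is stationary in $\V$ precisely because $\diamondsuit$ holds in $\V$. This is exactly where $\diamondsuit$ replaces the $^{++}$ of Lemma \ref{lem:SRP++ImpliesExactDSR}. Almost disjointness of distinct $T_\alpha$ holds by the same argument, provided the $\bar g$'s are distinct, which genericity over $\bcalN$ ensures. With this in hand, each $\bM_\xi\in\bar X$ satisfies condition (1) of Lemma \ref{lem:StronglyExactSimultaneousRP} for these $T_\alpha$ and $S'_\alpha$. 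Condition (2) holds after adding to $T_0$ the countable ordinals where no $\alpha$ applies, as in the proof of Theorem \ref{thm:GettingeDSR}. Lemma \ref{lem:StronglyExactSimultaneousRP} then yields that $R$ contains a club in $\rho=\sup R$ and that $\rho$ is strongly exact for $\vS'|R=\vS|R$, using closure of $\ran(\sigma)$ under $f$. One technical point needs care: $\sigma(\bM_\xi)$ must lie in $X$ as computed in $\V$, not in the extension. If necessary, I will instead apply Lemma \ref{lem:StronglyExactSimultaneousRP} directly to the $M_\xi$, pulling (1) back through $\sigma$ from $\bcalN$.
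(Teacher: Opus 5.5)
Your proposal is correct and follows the paper's proof essentially step by step. You force with $\Add(\omega_1,\lambda)$, combine the generic subsets with the ground-model $\diamondsuit$-sequence to get $\lambda$ almost disjoint stationary sets, and apply the forcing axiom to $\Add(\omega_1,\lambda)*\dot{\P}_{\dot{X}'}$ through Jensen's characterization. The key point, that the pulled-back $\bar T_\alpha$ are stationary in $\V$ because $\diamondsuit$ holds there, is also the paper's, and verifying the hypotheses of Lemma \ref{lem:StronglyExactSimultaneousRP} directly via $\sigma$ is exactly what the paper does.

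The only slip is the citation. Enhanced $\infSC$-$\SRP^\FA$ is a plain $\FA_{\omega_1}$, so the relevant characterization is \cite[\S1, Lemma 1]{Jensen:FAandCH}, not Lemma 2. Since you only use the genericity of $\bar G*\bar H$ over $\bcalN$, nothing is lost. It would also be worth saying explicitly that $\Add(\omega_1,\lambda)$ is $\omega_2$-c.c.\ under \CH, so $\lambda$ stays regular in the extension.
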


\noindent\emph{Note:} The result is local, and slightly stronger than stated. Namely, the forcing axiom for forcing notions of the form $\Add(\omega_1,\lambda)*\dot{\Q}$, where $\Add(\omega_1,\lambda)$ forces that there is a spread out subset of $[H_\lambda]^\omega$ such that $\dot{\Q}=\P_X$ (i.e., $\P_X\in\infty$-$\SC$), is sufficient. To emphasize, asking that $\Add(\omega_1,\lambda)*\dot{\Q}$ is in $\infSC$ is weaker than asking that $\Add(\omega_1,\lambda)$ forces that $\dot{\Q}$ is in $\infSC$, hence this modified forcing axiom follows from the enhanced \infSC-$\SRP^\FA$.

\begin{proof}
Let a sequence $\vS = \seq{S_{\alpha,\xi}}{\alpha<\lambda,\ \xi<\omega_1}$ of stationary subsets of $S^\lambda_\omega$ be given. We find $\rho < \lambda$ and $R \subseteq \rho$ witnessing strongly exact $\DSR ( \omega_1 , S^\lambda_\omega )$ for $\vS$.

Fixing a bijection $f:\lambda\To\lambda\times\omega_1$, define $S_\alpha ' =S_{f(\alpha)}$, for $\alpha<\lambda$, as in the proof of Theorem \ref{thm:GettingeDSR}. Denote $\seq{S_\alpha '}{\alpha<\lambda}$ by $\vS '$.
Fix a $\diamondsuit$-sequence $\vD=\seq{D_\xi}{\xi<\omega_1}$.

Let $\P=\Add(\omega_1,\lambda)$. Then $\P$ is $\omega_2$-c.c., so it preserves $\lambda$ as a regular cardinal, and it is $\omega_1$-closed, so it preserves $S^\lambda_\omega$, stationary subsets of $S^\lambda_\omega$, and $\vec{D}$ as a $\diamondsuit$-sequence. For $\alpha<\lambda$, let $\dot{A}_\alpha$ be the canonical $\P$-name for the $\alpha$-th new subset of $\omega_1$ added by $\P$, and let $\dot{T}_\alpha$ be a canonical $\P$-name for the set
\[\{\xi<\omega_1\st\dot{A}_\alpha\cap\xi=D_\xi\}.\]
Thus, since $\vec{D}$ remains a $\diamondsuit$-sequence in $\V^\P$, it is forced by $\P$ that the $\dot{T}_\alpha$ are a pairwise almost disjoint sequence of stationary subsets of $\omega_1$.

Let $\dot{X}$ be the canonical $\P$-name for the set $X_{\vS ',\dot{\vT}}$ of Definition \ref{def:MagicalSets}. By Lemma \ref{lem:MagicalSetsSpreadOutAboveKappa}, $\dot{X}$ is spread out in $\V^\P$. Also, let $\dot{Z}$ be the canonical $\P$-name for the set of all $M \in [H_\lambda]^\omega$ with $\kla{M,{\in},f\rest M}\prec\kla{H_\lambda^{\V^\P},{\in},f}$.
As before, letting $\dot{X}'$ be the canonical $\P$-name for $\dot{X} \cap \dot{Z}$, we have that $\dot{X}'$ is spread out in $\V^\P$. Let $\dot{\Q}$ be the canonical $\P$-name for the forcing $\P_{\dot{X}'}$.

Now take a sufficiently large regular cardinal $\theta$, and let $\calN = \kla{H_\theta,{\in}}$. By our assumption, $\FA_{\omega_1}(\{\P*\dot{\Q}\})$ holds, so, appealing to Jensen \cite[\S1, Lemma 1]{Jensen:FAandCH}, for example, let $\bcalN$ be transitive, and let $\sigma:\bcalN\prec \calN$ be such that $\omega_1\cup\{f,\vec{S},\vec{D}\}\sub\ran(\sigma)$, and such that, denoting preimages under $\sigma$ by adding bars as before, there is a filter $\bar{G}*\bar{H}$ which is $\bar{\P}*\dot{\bar{\Q}}$-generic over $\bcalN$. Let $R = ( \sigma``\bcalN ) \cap \lambda$ and $\rho = \sup R$. We show that $\rho$ and $R$ are as desired.

Since $\omega_1\sub\ran(\sigma)$ and $\sigma\restrict\omega_1=\id$, we have $\sigma^{-1}(\vec{D})=\vec{D}$ is our original $\diamondsuit$-sequence. Denote $\sigma^{-1}(\seq{\dot{A}_\alpha}{\alpha<\lambda})$ by $\seq{\dot{\bA}_\alpha}{\alpha<\blambda}$,
$\sigma^{-1}(\seq{\dot{T}_\alpha}{\alpha<\lambda})$ by $\seq{\dot{\bar{T}}_\alpha}{\alpha<\blambda}$
and let $\bar{A}_\alpha=(\dot{\bar{A}}_\alpha)^{\bG}$, as well as $\bar{T}_\alpha=(\dot{\bar{T}}_\alpha)^{\bG}$, for $\alpha<\blambda$. Then, by the choice of these names, $\bar{T}_\alpha=\{\xi<\omega_1\st \bA_\alpha\cap\xi=D_\xi\}$ is a stationary subset of $\omega_1$. (This is because $\vec{D}$ is a $\diamondsuit$-sequence not only in $\bcalN$, but in $\V$ - it is to this end that we assumed $\diamondsuit$ to hold.)

Let $\seq{\bM_\xi}{\xi<\omega_1}=\bigcup\bH$. So $\seq{\bM_\xi}{\xi<\omega_1}$ is an increasing continuous $\in$-sequence through $\bar{X}'=(\sigma^{-1}(\dot{X}'))^{\bG}$. Let $\bar{T}_\alpha ' = \bar{T}_\alpha$ for $\alpha \in \blambda \setminus \{ 0 \}$, and let
\[\bT_0 ' = \bT_0 \cup \{ \xi < \omega_1 \st \text{there is no $\alpha \in \bM_\xi \cap \lambda$ with $\bM_\xi \cap \omega_1 \in \bT_\alpha$} \}. \]
For any $\xi < \omega_1$ and any $\alpha < \blambda$, if $\alpha \in \bM_\xi$ and $\bM_\xi \cap \omega_1 \in \bT_\alpha '$, then $\sup ( \bM_\xi \cap \blambda ) \in \bS_\alpha '$, where $\seq{\bS_\alpha '}{\alpha<\blambda} = \sigma^{-1}(\seq{S_\alpha '}{\alpha < \lambda})$. Also, for any $\xi < \omega_1$ there is $\alpha \in \bM_\xi \cap \blambda$ with $\bM_\xi \cap \omega_1 \in \bT_\alpha '$.

Let $M_\xi = \sigma ( \bM_\xi )$ for $\xi<\omega_1$. Note that $\seq{M_\xi}{\xi < \omega_1}$ is an increasing continuous $\in$-chain, and $R= \bigcup_{\xi < \omega_1} M_\xi \cap \lambda = \sigma `` \blambda$. For $\alpha \in R$ let $T_\alpha = \sigma ( \bT_{\balpha} ' ) = \bT_{\balpha}'$. Then $\seq{T_\alpha}{\alpha\in R}$ satisfies the assumption of Lemma \ref{lem:StronglyExactSimultaneousRP} for $\vS '$. Moreover $\vS ' |R = \vS |R$ since $\bigcup_{\xi < \omega_1} M_\xi$ is closed under $f$ and $f^{-1}$. Then $R$ contains a club in $\rho$, and $\rho$ is a strongly exact simultaneous reflection point of $\vS |R$.
\end{proof}

We will see in Subsection \ref{subsec:SeparatingEnhancedSRP-FAfromuDSR} that some form of nonsaturation assumption is needed here: even a stronger form of enhanced \SRP does not even imply a weaker form of diagonal reflection at $\lambda>\omega_2$.

\section{Separation results}
\label{sec:Separations}

In this section, we will show that various forms of the strong reflection principle do not imply certain principles of diagonal reflection.

\subsection{Redoing Larson's argument: separating $\SRP^\FA$ from $\OSR_{\omega_2}$}
\label{subsec:SeparatingSRP-FAfromOSR}

Here, we describe Larson's argument of \cite{Larson:SeparatingSRP}, originally showing that $\SRP$ does not imply $\OSR_{\omega_2}$. We will use some variations of this technique, generalize it for later applications, and strengthen the separation slightly, so we go into some detail.

The basic strategy, as employed by Larson  for the case of stationary set preserving forcing, to show that the fragment of \SRP corresponding to some forcing class $\Gamma$ does not imply some diagonal stationary reflection principle $\Delta$ is to start in a model of $\FA_{\omega_1}(\Gamma)$, force with the natural forcing $\P$ to add a sequence of stationary subsets of some $S^\lambda_\omega$ which exemplifies the failure of $\Delta$, such that there is a further forcing $\dot{\mathbb{T}}$ (usually a kind of club shooting forcing), so that $\P*\dot{\mathbb{T}}$ is $\omega_2$-directed closed. Under mild assumptions, $\FA_{\omega_1}(\Gamma)$ still holds in $\V^{\P*\dot{\mathbb{T}}}$, and as a last step, one argues that in $\V^\P$, the $\Gamma$-fragment of \SRP still holds. We will in fact see that $\Gamma$-$\SRP^\FA$ still holds in Larson's original model (where $\Gamma=\SSP$). The following preservation theorem is useful in showing this in a more general setting. It is slightly different from Larson's original argument, and since $\Gamma$-$\SRP^\FA$ is a forcing axiom, the theorem looks more like a forcing axiom preservation fact, and in fact it fits into the framework of \cite[Theorem 20]{Cox:FAapproachabilitySSR}, but we give here a short, self-contained proof. In our applications of the theorem, the forcing $\Q$ will be of the form $\P_X$.

\begin{thm}
\label{thm:AbstractPreservation}
Let $\Gamma$ be a forcing class closed under restrictions\footnote{$\Gamma$ being closed under restrictions means that if $\mathbb{R}\in\Gamma$ and $r\in\mathbb{R}$, then $\mathbb{R}_{{\le}r}$, the restriction of $\mathbb{R}$ to $\{s\st s\le r\}$, is also in $\Gamma$.} and containing all $\omega_2$-directed closed forcing notions, and assume that $\FA_{\omega_1}(\Gamma)$ holds. Let $\P$ be a forcing notion, $G\sub\P$ generic, and in $\V[G]$, let $\Q=\dot{\Q}^G$ and $\T=\dot{\T}^G$ be forcing notions such that
\begin{enumerate}[label=\textnormal{(\arabic*)}]
  \item In $\V$, there is a dense $\mathbb{U}\sub\P*\dot{\T}$ which is $\omega_2$-directed closed.%
  \footnote{The following slightly weaker condition on $\mathbb{U}$ suffices: whenever $D\sub\mathbb{U}$ is a directed subset of cardinality less than $\omega_2$, there is a $p\in\P$ such that for every $\kla{p',\dot{t}}\in D$, $p\le p'$.}
  \item $\P*(\dot{\Q}\times\dot{\T})\in\Gamma$.\footnote{Here, $\dot{\Q}\times\dot{\T}$ denotes the canonical $\P$-name for the product of $\dot{\Q}$ and $\dot{\T}$.}
\end{enumerate}
Then $\FA_{\omega_1}(\{\Q\})$ holds in $\V[G]$.
\end{thm}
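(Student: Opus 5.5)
We argue in $\V$ by contradiction. In $\V[G]$, fix a sequence $\seq{\dot{D}_i}{i<\omega_1}$ of names for dense subsets of $\Q$. We must find, in $\V[G]$, a filter on $\Q$ meeting all of them. Since $\P*(\dot{\Q}\times\dot{\T})\in\Gamma$ and $\Gamma$ is closed under restrictions, we may apply $\FA_{\omega_1}(\Gamma)$ in $\V$ to a suitable restriction of this three-step forcing. The usual trick is to produce, by the forcing axiom, a filter that is generic over a small elementary substructure. The $\P*\dot{\T}$-part of that filter is then pushed down into $G$ using the directed closure of $\mathbb{U}$. We first reduce to the case that some $p_0\in G$ forces that $\seq{\dot D_i}{i<\omega_1}$ is a counterexample, i.e., that no filter on $\dot\Q$ meets all $\dot D_i$. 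The goal is then to find a condition in $G$ below $p_0$ forcing the opposite, which is a contradiction.

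\textbf{Step 1.} Apply $\FA_{\omega_1}(\Gamma)$ to $(\P*(\dot\Q\times\dot\T))_{\le p_0}$. We use the $\omega_1$ dense sets that make the $\P*\dot\T$-part of the filter lie inside $\mathbb{U}$ and be sufficiently generic, together with the dense sets $E_i=\{\kla{p,\kla{\dot q,\dot t}}\st p\forces\dot q\in\dot D_i\}$. Via Jensen's characterization cited earlier (\cite[\S1, Lemma 1]{Jensen:FAandCH}), we may instead take $\sigma:\bcalN\prec H_\theta$ with $\omega_1\sub\bcalN$ countable-closure-free, and a filter $\bar K=\bar G*(\bar H\times\bar J)$ generic over $\bcalN$. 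Let $F=\{\kla{p,\dot t}\in\mathbb{U}\cap\ran(\sigma)\st \sigma^{-1}(\kla{p,\dot t})\in\bar G*\bar J\}$. We can arrange that the $\bar G*\bar J$ part is generated by elements of $\bar{\mathbb U}$, so $\sigma``$ of it is a directed subset of $\mathbb{U}$ of size $\omega_1<\omega_2$. In general, with a plain $\FA_{\omega_1}$ filter of size $\omega_1$, we would simply take the images of the filter elements.

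\textbf{Step 2.} By $\omega_2$-directed closure (or the footnoted weaker version), there is $p^*\in\P$, and in fact a $\kla{p^*,\dot t^*}\in\mathbb U$, below every element of $\sigma``(\bar G*\bar J)$. Let $p^*\le p_0$. Now $p^*$ forces the following: the set $\{\sigma(\bar q)^{\dot G}\st \bar q\in\bar H\}$, where $\bar q$ ranges over the $\bar\Q$-names whose values are in $\bar H$ and which are decided by conditions in $\bar G$, generates a filter on $\dot\Q$ meeting each $\dot D_i$. Indeed, for $\bar q$ evaluated as a member of $\bar H$ there is $\kla{\bar p,\kla{\bar{\dot q},\bar{\dot t}}}\in\bar K\cap E_i$, and $p^*\le\sigma(\bar p)$ forces $\sigma(\bar{\dot q})\in\dot D_i$. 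Directedness is forced similarly, since compatibility of members of $\bar H$ is witnessed by conditions in $\bar G$, whose images lie above $p^*$. Here we use $\sigma(\dot D_i)=\dot D_i$ because $i\in\omega_1\sub\ran(\sigma)$ and the sequence lies in $\ran(\sigma)$. This contradicts the fact that $p^*\le p_0$ forces the failure. Hence $\FA_{\omega_1}(\{\Q\})$ holds in $\V[G]$.

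\textbf{Main obstacle.} The delicate point is Step 2: the transfer of the filter to $\V$. We need the pointwise images of the $\P$-part of a generic over $\bcalN$ to have a common lower bound in $\P$. Directed closure of $\mathbb U$ is the only available tool, and it applies to $\P*\dot\T$, not to $\P$ alone. This is exactly why $\dot\T$ must appear in the product, so that the $\bar J$-coordinate keeps $\sigma``(\bar G*\bar J)$ directed inside $\mathbb U$. We will need to check carefully that this image is directed and of size at most $\omega_1$, and that the evaluations of the $\Q$-names are decided correctly by $p^*$. The remaining verifications are routine.
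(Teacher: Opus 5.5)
Your proposal is correct and follows essentially the paper's proof. Both arguments argue by contradiction from a condition $p_0$ forcing failure, apply $\FA_{\omega_1}(\Gamma)$ to $\P_{\le p_0}*(\dot{\Q}\times\dot{\T})$ with dense sets forcing $\dot q\in\dot D_i$ and putting the $\P*\dot{\T}$-part into $\U$, obtain a lower bound $p^*$ from the $\omega_2$-directed closure of $\U$, and observe that $p^*\le p_0$ forces the transferred $\Q$-names to generate a generic filter. The only variation is that you obtain the $\omega_1$-sized directed set as $\sigma``\bar{K}$ via Jensen's characterization instead of thinning the $\FA$ filter directly, which has the small advantage that genericity over $\bcalN$ and density of $\sigma^{-1}(\U)$ make it automatic that the $\P*\dot{\T}$-projections can be taken inside $\U$ and directed there.
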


\begin{proof} Assume not.

In $\V[G]$, let $\vec{\Delta}=\dot{\Delta}^G=\seq{\Delta_\xi}{\xi<\omega_1}$ be a sequence of dense open subsets of $\Q$ which cannot be met by a filter in $\Q$. Let $p\in G$ force this.

In $\V$, let, for $\xi<\omega_1$, $D_\xi\sub\R=\P_{{\le}p}*(\dot{\Q}\times\dot{\T})$ be defined by
\[D_\xi=\{\kla{r,\dot{q},\dot{t}}\st r\in\P_{{\le}p},\ \kla{r,\dot{t}}\in\U,\ \text{and}\ r\forces_{\P}\dot{q}\in\dot{\Delta}_{\check{\xi}}\}.\]
Then $D_\xi$ is dense in $\R$, and $\R\in\Gamma$, so we may let $F\sub\R$ be a $\{D_\xi\st\xi<\omega_1\}$-generic filter, $F\in\V$. It is easy to construct $\bar{F}\sub F$ of size $\omega_1$ such that for all $\xi<\omega_1$, $\bar{F}\cap D_\xi\neq\leer$, and such that for all $s,t\in\bar{F}$, there is a $u\in\bar{F}$ such that $u\le s,t$. So $\bar{F}$ is a directed subset of $F$ that generates a $\{D_\xi\st\xi<\omega_1\}$-generic filter in $\mathbb{R}$. Let
\[I=\{\kla{r,\dot{t}}\st\exists\dot{q}\ \kla{r,\dot{q},\dot{t}}\in\bar{F}\}.\]
So $I$ is a directed subset of $\U$, of size at most $\omega_1$. Since $\U$ is $\omega_2$-directed closed, or by our slightly weaker assumption on $\U$, there is an $r^*\in\P$ such that for every $\kla{r,\dot{t}}\in I$, $r^*\le r$.

Let $\tilde{F}=\{\kla{\dot{q},r}\st\exists\dot{t}\ \kla{r,\dot{q},\dot{t}}\in\bar{F}\}$. So $\tilde{F}$ is a $\P$-name for a subset of $\dot{\Q}$.\\

\noindent\emph{Claim:} $r^*\forces_\P``\tilde{F}\ \text{generates a $\dot{\Delta}$-generic filter in $\dot{\Q}$.''}$\\

To see this, let $H$ be $\P$-generic over $\V$ with $r^*\in H$. Let $a,b\in\tilde{F}^H$. Let $\kla{\dot{q}_0,r_0},\kla{\dot{q}_1,r_1}\in\tilde{F}$, where $r_0,r_1\in H$ and $a=\dot{q}_0^H$, $b=\dot{q}_1^H$. Let $\dot{t}_0,\dot{t}_1$ be such that $\kla{r_0,\dot{q}_0,\dot{t}_0},\kla{r_1,\dot{q}_1,\dot{t}_1}\in\bar{F}$. Since $\bar{F}$ is directed, there is a condition $\kla{r_2,\dot{q}_2,\dot{t}_2}\in\bar{F}$, below both $\kla{r_0,\dot{q}_0,\dot{t}_0}$ and $\kla{r_1,\dot{q}_1,\dot{t}_1}$. In particular, $r_2\le r_0,r_1$ and $r_2\forces\dot{q}_2\le\dot{q}_0,\dot{q}_1$. Moreover, since $\kla{r_2,\dot{t}_2}\in I$, it follows that $r^*\le r_2$. Since $r^*\in H$, it follows that $r_2\in H$, and we have that $c=\dot{q}_2^H\le\dot{q}_0^H=a$ and $c\le\dot{q}_1^H=b$, and since $\kla{\dot{q}_2,r_2}\in\tilde{F}$, $c\in\tilde{F}^H$. Thus, $\tilde{F}^H$ generates a filter. Moreover, for $\xi<\omega_1$, there is a condition $\kla{r,\dot{q},\dot{t}}\in\bar{F}\cap D_\xi$, so $r\forces_\P\dot{q}\in\dot{\Delta}_{\check{\xi}}$. Again, since $\kla{r,\dot{t}}\in I$ we have that $r^*\le r$, so $r\in H$. Hence, $\dot{q}^H\in\dot{\Delta}^H_\xi$, and as $\kla{\dot{q},r}\in\tilde{F}$, $\dot{q}^H\in\tilde{F}^H$. So the filter generated by $\tilde{F}^H$ is $\dot{\Delta}^H$-generic.

Since $r^*\le p$, this contradicts what $p$ forced.
\end{proof}

In applications of Theorem \ref{thm:AbstractPreservation}, the forcing $\P$ will add a counterexample to the undesired diagonal reflection principle, that is, a sequence of stationary sets that fails to reflect in a particular way. The forcing $\T$ will shoot a club through the complement of some stationary set, thereby ``killing'' its stationarity, as follows.

\begin{defn}
\label{def:KillingStationarySets}
Let $\lambda$ be an uncountable regular cardinal, and let $A\subseteq\lambda$ be unbounded. The forcing notion $\bb{T}_S$ consists of all closed, bounded subsets $t$ of $\lambda$ such that $t \cap S = \leer$. The ordering is defined by setting, for $t_0, t_1 \in \bb{T}_S$, $t_1 \leq_{\bb{T}_S} t_0$ if
$t_1$ \emph{end-extends} $t_0$, that is, $t_1\cap( \max t_0 + 1 )=t_0$.
\end{defn}

The next lemma provides the basic facts about this type of forcing.

\begin{lem}[{\cite[Lemma 3.8]{Fuchs-LambieHanson:SeparatingDSR}}]
\label{lem:BasicsOnT_S}
Let $\lambda$ be an uncountable regular cardinal, and let $S\subseteq\lambda$ be such that $\lambda\setminus S$ is unbounded in $\lambda$.
\begin{enumerate}[label=\textnormal{(\arabic*)}]
  \item
  \label{item:AddingAClub}
  Let $G$ be $\bb{T}_S$-generic over $V$. Then
  $\bigcup G$ is a club subset of $\lambda$ that is disjoint from $S$.
  \item
  \label{item:TFAE}
  Suppose that $\bb{T}_S$ preserves the fact that $\lambda$ has uncountable cofinality (in our applications, $\bb{T}_S$ will be $\lambda$-distributive, so this will certainly be the case).
  Then, for a set $T\subseteq\lambda$ such that $T\setminus S\subseteq S^\lambda_\omega$, the following are equivalent:
  \begin{enumerate}[label=\textnormal{(\alph*)}]
    \item
    \label{item:T_SpreservesTAlways}
    $\forces_{\bb{T}_S}$ ``$\check{T}$ is stationary in $\check{\lambda}$.''
    \item
    \label{item:T_SpreservesTSometimes}
    There is a $t\in\bb{T}_S$ such that $t\forces_{\bb{T}_S}$ ``$\check{T}$ is stationary in $\check{\lambda}$.''
    \item
    \label{item:TminusSstationary}
    $T\setminus S$ is stationary.
  \end{enumerate}
\end{enumerate}
\end{lem}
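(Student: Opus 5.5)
The statement to prove is Lemma \ref{lem:BasicsOnT_S}. I will handle part \ref{item:AddingAClub} by density arguments, and part \ref{item:TFAE} through the cycle \ref{item:T_SpreservesTAlways}$\Rightarrow$\ref{item:T_SpreservesTSometimes}$\Rightarrow$\ref{item:TminusSstationary}$\Rightarrow$\ref{item:T_SpreservesTAlways}.

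\textbf{Part \ref{item:AddingAClub}.} The conditions in $G$ are pairwise compatible end-extensions, so $C=\bigcup G$ is a set whose initial segments are the conditions in $G$. For unboundedness, given $t$ and $\beta<\lambda$, I pick $\gamma>\max(t),\beta$ with $\gamma\notin S$; this is possible since $\lambda\setminus S$ is unbounded. Then $t\cup\{\gamma\}$ is a condition extending $t$, so by density $C$ is unbounded. For closedness, let $\delta<\sup C$ be a limit point of $C$. Some $t\in G$ has $\max t>\delta$, and since $t$ is an initial segment of $C$ and is closed, $\delta\in t\sub C$. Disjointness from $S$ is immediate from the definition of the conditions.

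\textbf{Part \ref{item:TFAE}.} The step \ref{item:T_SpreservesTAlways}$\Rightarrow$\ref{item:T_SpreservesTSometimes} is trivial. For \ref{item:T_SpreservesTSometimes}$\Rightarrow$\ref{item:TminusSstationary}, let $t$ force that $T$ is stationary. The generic club $C$ avoids $S$, so $t$ forces that $T\cap C\sub T\setminus S$ is stationary. Since ``$T\setminus S$ is nonstationary'' is witnessed by a ground-model club, which remains a club, this upward-absolute fact would contradict what $t$ forces. Hence $T\setminus S$ is stationary in $\V$.

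The main work is \ref{item:TminusSstationary}$\Rightarrow$\ref{item:T_SpreservesTAlways}. Let $t$ be a condition and $\dot{E}$ a name with $t\forces$``$\dot{E}$ is club''. I want $q\le t$ and $\gamma\in T$ with $q\forces\gamma\in\dot{E}$. I take a sufficiently large $\theta$ and an elementary submodel $N\prec H_\theta$ containing $t,\dot{E},S,T,\lambda$, such that $N$ is countable-cofinality in the sense that $\gamma=\sup(N\cap\lambda)\in T\setminus S$. Such an $N$ exists because $T\setminus S$ is a stationary subset of $S^\lambda_\omega$: I build a continuous chain of submodels of size less than $\lambda$, whose suprema form a club in $\lambda$, and pick a point of $T\setminus S$ on that club, which has cofinality $\omega$. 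Then I fix a cofinal $\omega$-sequence $\seq{\gamma_n}{n<\omega}$ in $\gamma$ and build a descending sequence $t=t_0\ge t_1\ge\cdots$ of conditions in $N$ with $\max t_{n+1}>\gamma_n$, where $t_{n+1}$ decides some ordinal of $\dot{E}$ above $\gamma_n$, below $\gamma$. Each step uses elementarity, and the sequence need not be generic over $N$, since only $\omega$ steps are required. Then $q=\bigcup_n t_n\cup\{\gamma\}$ is closed, and $\gamma\notin S$ guarantees that it is a condition. The condition $q$ forces that $\dot{E}$ is unbounded in $\gamma$, hence that $\gamma\in\dot{E}\cap T$.

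The subtle point is using the hypothesis that $\lambda$ keeps uncountable cofinality. It ensures that ``club'' in the extension is meaningful, and that closure of $\dot{E}$ applies at $\gamma$. I also need $t_n\in N$ to guarantee $\max t_n<\gamma$. This limit step, together with the requirement $\gamma\notin S$, is where the hypotheses $T\setminus S\sub S^\lambda_\omega$ and the stationarity of $T\setminus S$ are genuinely used. No other closure of $\T_S$ is needed, since only $\omega$-length sequences are taken.
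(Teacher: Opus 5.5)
Your proof is correct and is the standard argument. The paper gives no proof of this lemma and simply quotes \cite[Lemma 3.8]{Fuchs-LambieHanson:SeparatingDSR}, but your step (c)$\Rightarrow$(a) is the same $\omega$-length descending construction inside an elementary submodel, capped by its supremum $\gamma\notin S$, that the paper uses to prove Lemma \ref{lem:PreservationOfPotentialStationarity}.

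One small correction to your closing commentary: the hypothesis that $\lambda$ keeps uncountable cofinality is really used in (b)$\Rightarrow$(c), where you need $T\cap\bigcup G$ to be stationary (a stationary set intersected with a club). The construction for (c)$\Rightarrow$(a) does not need it, since closure of $\dot{E}$ at $\gamma<\lambda$ holds regardless.
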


Towards characterizing when $\T_S$ preserves the generalized stationarity of a set, the following lemma will be useful. It is basically due to Larson \cite{Larson:SeparatingSRP}; see
\cite[Lemma 4.3]{Fuchs-LambieHanson:SeparatingDSR} for a proof of this exact formulation.

\begin{lem}
\label{lem:WhenT_APreservesGenStationarity}
Let $\lambda>\omega_1$ be regular, $H\supseteq\lambda$ a set, $S\subseteq\lambda$ stationary in $\lambda$ and $X\subseteq[H]^\omega$ stationary in $[H]^\omega$, so that $\lambda\setminus S$ is unbounded in $\lambda$ and $\T_S$ is countably distributive (this is the case, for example, if $S\subseteq S^\lambda_\omega$ and $S^\lambda_\omega \ohne S$ is stationary). Then the following are equivalent:
\begin{enumerate}[label=\textnormal{(\arabic*)}]
\item
\label{item:SminusAisStationary}
$X\setminus\lifting(S,[H]^\omega)$ is stationary.
\item
\label{item:StationarityOfSisPreserved}
$\bb{T}_S$ preserves the stationarity of $X$.
\item
\label{item:SomebodyPreservesStationarityOfS}
There is a condition $p\in\T_S$ that forces that $\check{X}$ is stationary.
\end{enumerate}
\end{lem}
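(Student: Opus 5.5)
We prove the cycle \ref{item:SminusAisStationary}$\Rightarrow$\ref{item:StationarityOfSisPreserved}$\Rightarrow$\ref{item:SomebodyPreservesStationarityOfS}$\Rightarrow$\ref{item:SminusAisStationary}. Write $L=\lifting(S,[H]^\omega)$. The middle implication is trivial. This uses that $\T_S$ is nontrivial, and that ``forces'' in \ref{item:StationarityOfSisPreserved} means that the trivial condition forces $\check X$ stationary, so any condition witnesses \ref{item:SomebodyPreservesStationarityOfS}.

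For \ref{item:SomebodyPreservesStationarityOfS}$\Rightarrow$\ref{item:SminusAisStationary}, argue by contraposition. Suppose $X\setminus L$ is nonstationary, and fix a club $C\sub[H]^\omega$ with $C\cap X\sub L$. In $\V^{\T_S}$, let $E=\bigcup G$, which is a club in $\lambda$ disjoint from $S$ by Lemma \ref{lem:BasicsOnT_S}. The set $D=\{x\in C\st \sup(x\cap\lambda)\in E\}$ contains a club in $[H]^\omega$ of the extension. To see this, intersect the club generated by $C$ (since $\T_S$ is countably distributive, no new countable sets are added, so $C$ still generates a club) with the club of $x$ closed under a function picking, for each $\alpha<\lambda$, an element of $E$ above $\alpha$. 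For such $x$, $\sup(x\cap\lambda)$ is a limit of $E$, hence lies in $E$. Every $x\in D\cap X$ lies in $L$, so $\sup(x\cap\lambda)\in S\cap E=\leer$. Thus $D\cap X=\leer$, and every condition forces $X$ nonstationary, so \ref{item:SomebodyPreservesStationarityOfS} fails.

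The main implication is \ref{item:SminusAisStationary}$\Rightarrow$\ref{item:StationarityOfSisPreserved}. Fix a condition $t$ and a $\T_S$-name $\dot F$ for a function $F:[H]^{<\omega}\to[H]^\omega$, so that closure points of $\dot F$ form a forced club. We must find $q\le t$ and $x\in X$ with $q$ forcing $x$ closed under $\dot F$. Take a large regular $\theta$ and consider the set of $N\prec H_\theta$ containing $t,\dot F,S,X$ with $N\cap H\in X\setminus L$. This set is stationary, since $X\setminus L$ is stationary and stationarity lifts to $[H_\theta]^\omega$. Pick such an $N$, and let $\delta=\sup(N\cap\lambda)$, so $\delta\notin S$ and $\cf(\delta)=\omega$. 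Build a descending sequence $t=t_0\ge t_1\ge\dots$ in $N$ meeting every dense set of $\T_S$ in $N$. Choose $\max t_n$ cofinal in $\delta$, which is possible by density of extensions with large maxima. Then set $q=\bigcup_n t_n\cup\{\delta\}$. This is a closed bounded set avoiding $S$ precisely because $\delta\notin S$, and this is the key point where $N\cap H\notin L$ is used. Being $N$-generic, $q$ decides $\dot F$ on each finite subset of $N\cap H$ with values computed in $N$, so $q$ forces $N\cap H$ closed under $\dot F$. I expect the only delicate points to be bookkeeping: ensuring the $\max t_n$ are cofinal in $\delta$, and justifying that countable distributivity is what lets $C$ remain a club base in the extension.
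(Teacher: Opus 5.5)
Your proof is correct and follows the standard route. The paper does not reprove this lemma; it cites \cite[Lemma 4.3]{Fuchs-LambieHanson:SeparatingDSR}. Your (1)$\Rightarrow$(2) is the master-condition argument the paper itself runs in the proof of Lemma~\ref{lem:PreservationOfPotentialStationarity}: an $N$-generic descending sequence capped by $\delta=\sup(N\cap\lambda)\notin S$. Your (3)$\Rightarrow$(1), via the generic club $\bigcup G$ disjoint from $S$ (with countable distributivity keeping $C$ a club and $\sup(x\cap\lambda)<\lambda$ for the relevant ground-model countable $x$), matches the reasoning in Lemma~\ref{lem:whent_apreversesprojstationarity}.
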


The next lemma is going to enable us to use Theorem \ref{thm:AbstractPreservation}. We omit its proof for now, since it is a special case of Lemma \ref{lem:PreservationOfMediatedProjectiveStationarity}, which we will prove in detail.

\begin{lem}
\label{lem:PreservationOfP_SbeingSSP}
Let $\theta\ge\lambda>\omega_1$ be regular cardinals, and let $S\sub S^\lambda_\omega$ be stationary. Let
$X$ be a stationary subset of $[H_\theta]^\omega$. Suppose that $\T_S$ is countably distributive. If for every stationary $T\sub\omega_1$, $X_T\ohne\lifting(S,[H_\theta]^\omega)$ is stationary, then $\T_S\times\P_X$ is stationary set preserving. Here, we use the notation from Definition \ref{def:ProjectiveStationarity}, writing $X_T=\{x\in X\st x\cap\omega_1\in T\}$. 
\end{lem}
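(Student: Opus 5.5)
Since $\T_S$ is countably distributive, it adds no countable sets of ordinals and no new subsets of $\omega$. Let $G$ be $\T_S$-generic and $H$ be $\P_X$-generic over $\V[G]$; equivalently, $G\times H$ is generic for the product. By Fact \ref{fact:Feng-Jech}, $\P_X$ preserves stationary subsets of $\omega_1$ exactly when $X$ is projective stationary. So the plan is to show that in $\V[G]$, $X$ is still a stationary subset of $[H_\theta^\V]^\omega$ that is projective stationary there, and that every stationary subset of $\omega_1$ in $\V$ stays stationary in $\V[G]$. Then the composition preserves stationarity: $\T_S$ preserves stationary subsets of $\omega_1$, and in $\V[G]$ the forcing $\P_X$ does as well. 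One caveat is that $H_\theta^\V$ is no longer $H_\theta$ in $\V[G]$. Fact \ref{fact:Feng-Jech} is stated for $H_\kappa$, but the Feng–Jech argument works verbatim for any set $A\supseteq\omega_1$ (here $A=H_\theta^\V$). I would note this and use the general version.

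\textbf{Step 1 ($\T_S$ preserves stationary subsets of $\omega_1$).} Let $T\sub\omega_1$ be stationary. The hypothesis says $X_T\ohne\lifting(S,[H_\theta]^\omega)$ is stationary, so Lemma \ref{lem:WhenT_APreservesGenStationarity} shows that $\T_S$ preserves the stationarity of $X_T$ in $[H_\theta]^\omega$. Projecting to $\omega_1$ then shows $T\supseteq\{x\cap\omega_1\st x\in X_T\}$ remains stationary. This uses that stationarity in $[A]^\omega$ projects down, and that ``$x\cap\omega_1\in T$'' is absolute since no countable sets are added.

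\textbf{Step 2 ($X$ stays projective stationary).} In $\V[G]$, let $T'\sub\omega_1$ be stationary. This is the main obstacle, because $T'$ may be a new set. I would first try to show that $\T_S$ adds no subsets of $\omega_1$: it is countably distributive, and conditions are bounded subsets of $\lambda>\omega_1$. Countable distributivity alone does not give $\omega_2$-distributivity, so I would instead argue via names. Fix a condition $t$, a name $\dot T$ with $t\forces$``$\dot T$ stationary'', and a name $\dot C$ for a club in $[H_\theta]^\omega$. Take a countable $N\prec H_{\chi}$ containing everything, with $N\cap H_\theta\in X\ohne\lifting(S,[H_\theta]^\omega)$ and $N\cap\omega_1\in T$ for a suitable ground-model stationary $T$. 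Here $T$ is chosen as $\{\delta\st$ some $t'\le t$ forces $\check\delta\in\dot T\}$, or more carefully by a tree-like density argument reflected into $N$. Since $\sup(N\cap\lambda)\notin S$, the standard Larson construction builds an $N$-generic descending sequence whose union, together with its supremum $\sup(N\cap\lambda)$, is a condition. I would then try to arrange that this master condition also decides $N\cap\omega_1\in\dot T$ positively. That last point is the crux. To handle it, I would apply the hypothesis to $T=\{\delta\st \exists t'\le t\ (t'\forces\delta\in\dot T)\}$, which is stationary in $\V$. Then I would choose the generic sequence inside $N$ to start below a $t'\in N$ forcing $N\cap\omega_1\in\dot T$. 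This needs $t'$ to be chosen in $N$, which I would get by elementarity if the witness can be picked uniformly. If this fails, the fallback is to intersect with clubs and use the fact that the stationary sets in $\V[G]$ relevant for Feng–Jech can be covered by ground-model stationary sets modulo a club.

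\textbf{Step 3.} With $X$ projective stationary in $\V[G]$, the generalized Fact \ref{fact:Feng-Jech} gives that $\P_X$ is stationary set preserving over $\V[G]$. Combining this with Step 1, $\T_S*\check{\P}_X\cong\T_S\times\P_X$ is stationary set preserving.
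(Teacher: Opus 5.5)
\textbf{Genuine gap.} The gap is your Step 2, and the trouble goes beyond incompleteness: the claim that $X$ stays projective stationary in $\V[G]$ with respect to \emph{all} stationary subsets of $\omega_1$ there does not follow from the hypotheses.

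Your proposed argument for it fails at a specific point. It needs a condition $t'\in N$ forcing $N\cap\omega_1\in\dot T$, but any such $t'$ depends on $\delta=N\cap\omega_1\notin N$, so elementarity cannot supply it. Your fallback, that new stationary sets contain ground-model stationary sets modulo a club, is false for generically added sets.

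The claim itself can consistently fail. Take $\lambda=\theta=\omega_2$ and suppose $B=S^{\omega_2}_\omega\ohne S$ is stationary but nonreflecting.
\begin{itemize}
\item[(i)] $\T_S$ is countably distributive, but it must collapse $\omega_2$. A limit point $\delta$ of the generic club of cofinality $\omega_1$ would carry a ground-model club $E\sub\delta$ with $E\cap S^\delta_\omega\sub S$. This $E$ meets the generic club in a point of cofinality $\omega$, which is impossible.
\item[(ii)] Hence the generic club is $\{d_\xi\st\xi<\omega_1\}$, and in $\V[G]$ we have $\sup(x\cap\omega_2)=d_{x\cap\omega_1}$ for club many $x$.
\item[(iii)] Split $B$ into disjoint stationary sets $B_0,B_1$ and let $X=\{x\in[H_{\omega_2}]^\omega\st\sup(x\cap\omega_2)\notin B_1\}$. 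The hypothesis of the lemma holds, since already $\{x\st x\cap\omega_1\in T,\ \sup(x\cap\omega_2)\in B_0\}$ is stationary for every stationary $T$, in ZFC.
\item[(iv)] Yet $T'=\{\alpha\st d_\alpha\in B_1\}$ is stationary in $\V[G]$ by the usual genericity argument, while $X_{T'}$ is nonstationary there.
\end{itemize}
So $\P_X$ need not preserve all stationary subsets of $\omega_1$ over $\V[G]$, and your Step 3 cannot be carried out as formulated.

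\textbf{Missing idea.} Stationary set preservation of $\T_S\times\P_X$ concerns only stationary $T\sub\omega_1$ that lie in $\V$, and for those you already have what you need.
\begin{itemize}
\item[(i)] Your Step 1, applying Lemma \ref{lem:WhenT_APreservesGenStationarity} to $X_T$, shows that $X_T$ remains stationary in $[H_\theta^\V]^\omega$ in $\V[G]$.
\item[(ii)] $\P_X$ is the same poset in $\V[G]$, since no countable sequences are added.
\item[(iii)] Now use only the local direction of the Feng--Jech argument, inside $\V[G]$. Given $p$ and a name $\dot C$ for a club, take a countable $M\prec H_\chi^{\V[G]}$ with $M\cap H_\theta^\V\in X_T$. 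Build an $M$-generic descending sequence in $\P_X\cap M$ below $p$, and append $M\cap H_\theta^\V$ to its union. The resulting condition forces $M\cap\omega_1\in\dot C\cap\check T$.
\end{itemize}
This is exactly the paper's argument: the lemma is the case of trivial $\A$ in Lemma \ref{lem:PreservationOfMediatedProjectiveStationarity}. With this replacement, your Steps 2 and 3 are unnecessary.
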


The next theorem is essentially due to Larson \cite{Larson:SeparatingSRP}. Our proof is slightly different, and it separates $\SRP^\FA$ (not just \SRP) from \MM.

\begin{thm}
\label{thm:ALaLarson}
Assume \MM. Let $\P_{\omega_2}$ be the forcing to add a counterexample to $\OSR_{\omega_2}$. Then, in $\V^{\P_{\omega_2}}$, $\SRP^\FA$ holds (while $\OSR_{\omega_2}$ fails).
\end{thm}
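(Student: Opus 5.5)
We will apply Theorem \ref{thm:AbstractPreservation} with $\Gamma=\SSP$, the class for which \MM is $\FA_{\omega_1}(\Gamma)$. This class is closed under restrictions and contains all $\omega_2$-directed closed forcings. First we fix $\P_{\omega_2}$ concretely, following Larson. Its conditions are sequences $p=\seq{S^p_\alpha}{\alpha<\delta^p}$ with $\delta^p<\omega_2$, where each $S^p_\alpha$ is a subset of $S^{\omega_2}_\omega\cap\gamma^p$ for some $\gamma^p<\omega_2$. We require that no $\rho\le\gamma^p$ of uncountable cofinality is a simultaneous reflection point of $\{S^p_\alpha\st\alpha<\rho\}$ (restricted to $\rho$). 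The ordering is end-extension in both coordinates. Under \MM we have $2^{\omega_1}=\omega_2$, and the standard arguments give that this forcing is $\omega_1$-closed and $\omega_2$-strategically closed. In particular it adds no new subsets of $\omega_1$, preserves $\omega_2$, and the generic $\vS=\seq{S_\alpha}{\alpha<\omega_2}$ consists of stationary sets witnessing $\neg\OSR_{\omega_2}$.

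To get $\SRP^\FA$ in $\V[G]$, let $X\sub[H_\kappa]^\omega$ be stationary in $\V[G]$ with $\Q=\P_X$ stationary set preserving; by Fact \ref{fact:Feng-Jech}, $X$ is projective stationary. We must show $\FA_{\omega_1}(\{\Q\})$. The forcing $\dot{\T}$ will kill enough of the $S_\alpha$ so that the remaining ones reflect. Fix a countable set of parameters. In $\V[G]$, consider the stationary sets $X_T$ for stationary $T\sub\omega_1$, and pick $\beta<\omega_2$ large enough that the relevant sets are already decided. Let $S=\bigcup_{\alpha\ge\beta}S_\alpha$ (more precisely, the diagonal union $\{\gamma\st\exists\alpha\in[\beta,\gamma)\ \gamma\in S_\alpha\}$), and let $\T=\T_S$.

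The genericity of $\vS$ should yield a dense $\omega_2$-directed closed $\U\sub\P_{\omega_2}*\dot{\T}_S$. A condition $p$ paired with a club $t$ avoiding $S\cap\gamma^p$, with $\max t=\gamma^p$, excludes any future reflection at points of $t$. The limit of a directed family of fewer than $\omega_2$ such pairs is then again such a pair, and the union of the clubs is closed at its supremum because that supremum is now a point outside $S$. This is the standard Larson observation.

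It remains to verify condition (2), namely that $\P_{\omega_2}*(\dot{\Q}\times\dot{\T}_S)\in\SSP$. Since $\P_{\omega_2}$ is $\sigma$-closed, it preserves stationary subsets of $\omega_1$, so it suffices to show that in $\V[G]$ the product $\T_S\times\P_X$ is stationary set preserving. By Lemma \ref{lem:PreservationOfP_SbeingSSP}, this reduces to showing that for each stationary $T\sub\omega_1$, the set $X_T\ohne\lifting(S,[H_\kappa]^\omega)$ is stationary, together with countable distributivity of $\T_S$, which follows because $S^{\omega_2}_\omega\ohne S$ is stationary by genericity.

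The main obstacle is the stationarity of $X_T\ohne\lifting(S,\dots)$. We would argue by density in $\P_{\omega_2}$. Given a name $\dot{C}$ for a club in $[H_\kappa]^\omega$ and a condition, build a countable elementary submodel $M$ together with a descending $\omega$-sequence of conditions inside $M$. Using projective stationarity of $X$ in the ground-model-like situation, and the fact that sets indexed above $\beta$ are still undetermined, choose $M$ so that $M\cap H_\kappa\in X_T$. When taking the lower bound condition at $\sup(M\cap\omega_2)$, we are free to put $\sup(M\cap\omega_2)$ into none of the $S_\alpha$ with $\alpha\ge\beta$, which keeps the condition legal. This makes $M\cap H_\kappa$ avoid the lifting of $S$. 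Executing this carefully with names is the delicate point; it is the special case of Lemma \ref{lem:PreservationOfMediatedProjectiveStationarity}. With (1) and (2) in place, Theorem \ref{thm:AbstractPreservation} gives $\FA_{\omega_1}(\{\P_X\})$ in $\V[G]$, hence $\SRP^\FA$ holds there, while $\OSR_{\omega_2}$ fails.
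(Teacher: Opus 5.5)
The overall frame matches the paper: Theorem \ref{thm:AbstractPreservation} with $\Gamma=\SSP$, an $\omega_2$-directed closed dense subset of $\P_{\omega_2}*\dot{\T}$, and Lemma \ref{lem:PreservationOfP_SbeingSSP}. The gap is in the step you call the main obstacle, which is where the real work lies; your sketch does not supply the idea needed there. The paper relies on two facts you never invoke. First, the generic sets $S_\alpha$ are pairwise disjoint (they partition $S^{\omega_2}_\omega$). Second, $\NS_{\omega_1}$ is $\omega_2$-saturated, a consequence of \MM that persists to $\V[\vS]$. The paper then kills a single $S_\alpha$. Suppose that for every $\alpha<\omega_2$ there were a stationary $T_\alpha\sub\omega_1$ with $X_{T_\alpha}\ohne\lifting(S_\alpha,[H_\theta]^\omega)$ nonstationary. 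Saturation yields $\alpha_0<\alpha_1$ with $T=T_{\alpha_0}\cap T_{\alpha_1}$ stationary. Since $\sup(x\cap\omega_2)$ cannot lie in both $S_{\alpha_0}$ and $S_{\alpha_1}$, $X_T$ is covered by two nonstationary sets, contradicting projective stationarity.

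Your density argument uses nothing about $\V$ beyond $2^{\omega_1}=\omega_2$, and the claim it is meant to prove is false without saturation. Suppose $\V$ satisfies $\diamondsuit$ and $2^{\omega_1}=\omega_2$, and fix pairwise almost disjoint stationary $T_\alpha\sub\omega_1$, $\alpha<\omega_2$; they remain so in $\V[\vS]$. There the set $X=X_{\vS,\vT}$ of Definition \ref{def:MagicalSets} (with $\kappa=\omega_1$, $\lambda=\omega_2$) is projective stationary. Yet for each $\beta$, every $M\in X_{T_\beta}$ with $M\prec H_{\omega_2}$ and $\beta,\vT\in M$ has $\sup(M\cap\omega_2)\in S_\beta\ohne(\beta+1)$. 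So $X_{T_\beta}\ohne\lifting(S,[H_{\omega_2}]^\omega)$ is nonstationary both for $S=S_\beta$ and for your (diagonal) tail from $\beta$, and no choice of $\beta$ works.

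This pinpoints where your sketch breaks. Whether $M[G]\cap H_\kappa\in\dot{X}_T$ holds can depend on exactly which $S_\alpha$ receives $\sup(M\cap\omega_2)$. So your freedom at the lower bound cannot be combined with membership in $X_T$. Also, ``choose $\beta$ so that the relevant sets are decided'' has no content: $\dot{X}$ is not decided by any bounded part of $\vS$, and there are $2^{\omega_1}=\omega_2$ stationary $T$ to handle. Lemma \ref{lem:PreservationOfMediatedProjectiveStationarity} assumes this stationarity rather than proving it.

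Finally, your version of $\P_{\omega_2}$ omits disjointness, and that alone is fatal. For that forcing, a density argument makes $E=\{\gamma\in S^{\omega_2}_\omega\st\forall\alpha<\gamma\ \gamma\in S_\alpha\}$ stationary. \SRP applied to the projective stationary set $\lifting(E,[H_{\omega_2}]^\omega)$ then yields a $\rho$ such that $E\cap\rho$ contains a club in $\rho$. Since $E\ohne S_\alpha\sub\alpha+1$, this $\rho$ is a simultaneous reflection point of $\{S_\alpha\st\alpha<\rho\}$. So the conclusion of the theorem fails for that forcing.
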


\begin{proof}[Proof sketch.]
Let $\vec{S}=\seq{S_\alpha}{\alpha<\omega_2}$ be the generic sequence of stationary subsets of $S^{\omega_2}_\omega$ added by $\P_{\omega_2}$. It does not reflect in the way postulated by $\OSR_{\omega_2}$, and it forms a partition of $S^{\omega_2}_\omega$. The forcing is countably closed, and, letting $\dot{S}_\alpha$ be a canonical $\P_{\omega_2}$-name for $S_\alpha$, the forcing $\P_{\omega_2}*\dot{\T}_{\dot{S}_\alpha}$ has a suborder which is $\omega_2$-directed closed.

Working in $\V[\vec{S}]$, fix a projective stationary set $X\sub[H_\theta]^\omega$, for some regular $\theta\ge\omega_2$. To show that $\FA_{\omega_1}(\{\P_X\})$ holds, it suffices by Theorem \ref{thm:AbstractPreservation} to show that there is an $\alpha<\omega_2$ such that $\P_X\times\T_{S_\alpha}$ is stationary set preserving (as this implies that $\P_{\omega_2}*(\dot{\P}_{\dot{X}}\times\dot{\T}_{\dot{S}_\alpha})$ is stationary set preserving, at least below some condition in $\P_{\omega_2}$ which forces this situation). Since we are in the situation of Lemma \ref{lem:PreservationOfP_SbeingSSP}, it suffices to show the existence of an $\alpha<\omega_2$ such that for every stationary $T\sub\omega_1$, $X_T\ohne\lifting(S_\alpha,[H_\theta]^\omega)$ is stationary. 
 Supposing there were no such $\alpha$, we could find, for each $\alpha<\omega_2$, a stationary $T_\alpha\sub\omega_1$ such that 
 $X_{T_\alpha}\ohne\lifting(S_\alpha,[H_\theta]^\omega)$ is nonstationary.
 Since $\NS_{\omega_1}$ is $\omega_2$-saturated, there are $\alpha_0<\alpha_1<\omega_2$ such that $T=T_{\alpha_0}\cap T_{\alpha_1}$ is stationary. 
 We would then have that the union of $X_{T_{\alpha_0}}\ohne\lifting(S_{\alpha_0},[H_\theta]^\omega)$ and $X_{T_{\alpha_1}}\ohne\lifting(S_{\alpha_1},[H_\theta]^\omega)$ is nonstationary. But this union contains $X_T$, since if $x\in X_T$, then $x\cap\omega_1$ is in both $T_{\alpha_0}$ and $T_{\alpha_1}$, and $\sup(x\cap\omega_2)$  must avoid at least one of $S_{\alpha_0}$ and $S_{\alpha_1}$. Since $X_T$ is stationary, this is a contradiction.
\end{proof}

This previous result is in stark contrast to Theorem \ref{thm:SC-SRP+CHimpliesOSR}, stating that $\infSC$-\SRP $+$ \CH implies even the strongly exact version of $\OSR_{\omega_2}$.

\subsection{Separating enhanced $\SRP^\FA$ from $\uDSR(1,S^\lambda_\omega)$, $\lambda>\omega_2$.}
\label{subsec:SeparatingEnhancedSRP-FAfromuDSR}

We will now work towards showing a stronger separation in the context of diagonal reflection principles at higher cardinals. The goal is to show that
even if the \emph{enhanced} $\SRP^\FA$ holds, $\uDSR(1,S^\lambda_\omega)$ may fail at a regular cardinal $\lambda>\omega_2$. In fact, our result will be stronger than that. It highlights the significance of Theorem \ref{thm:AddingDiamondGivesDiagonalReflection}, which showed that enhanced $\infSC$-$\SRP^\FA+\diamondsuit$ implies strongly exact $\DSR(\omega_1,S^\lambda_\omega)$, for every regular $\lambda>\omega_2$ (for $\lambda=\omega_2$, it already follows from $\infSC$-$\SRP+\CH$, by Theorem \ref{thm:SC-SRP+CHimpliesOSR}).

\begin{obs}
\label{obs:MediatedProjectiveStationarity}
Let $\A$ be a countably distributive, stationary set preserving forcing, and let $\dot{X}$ be an $\A$-name for a stationary subset of $[H_\theta]^\omega$ (in the sense of the forcing extension, where $\theta>\omega_1$ is a regular cardinal.) 
Then $\A*\P_{\dot{X}}$ is stationary set preserving iff for every stationary set $T\sub\omega_1$,
\[\forces_\A\text{``$\dot{X}_{\check{T}}$ is stationary in $\omega_1$.''}\footnote{Here, we use notation introduced in Definition \ref{def:ProjectiveStationarity}.}\]
\end{obs}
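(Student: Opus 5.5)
The plan is to reduce the statement to the Feng--Jech characterization (Fact \ref{fact:Feng-Jech}) applied in $\V^\A$, together with the two-step decomposition of stationary set preservation. (The footnoted conclusion ``stationary in $\omega_1$'' should be read as ``stationary in $[H_\theta]^\omega$'', as in Definition \ref{def:ProjectiveStationarity}.) The first step is the standard observation that $\A*\P_{\dot X}$ is stationary set preserving iff every stationary $T\sub\omega_1$ of $\V$ remains stationary after forcing with $\A$ and then with $\P_{\dot X}$. Since $\A$ is stationary set preserving, every ground model stationary $T$ is still stationary in $\V^\A$, so the question concerns only $\P_{\dot X}$ over $\V^\A$, applied to sets $T$ from $\V$. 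Caution: Fact \ref{fact:Feng-Jech} in $\V^\A$ speaks about \emph{all} stationary subsets of $\omega_1$ in $\V^\A$, not just those in $\V$. This is exactly where countable distributivity of $\A$ is needed: $\A$ adds no new countable sets, hence no new subsets of any countable ordinal. I would not claim it adds no subsets of $\omega_1$. Instead, I will avoid needing that by arguing the relevant direction of Feng--Jech directly for ground model $T$.

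For ``$\Leftarrow$'', fix $G\sub\A$ generic and a stationary $T\in\V$, so $X_T$ is stationary in $\V[G]$. I want to show $\P_X$ preserves the stationarity of $T$. I will run the argument of Claim \ref{NC:BxiStationary} in $\V[G]$. Given $p$ and a name $\dot C$ for a club, take a countable $M\prec H_{\theta'}^{\V[G]}$ containing everything relevant with $M\cap H_\theta\in X_T$; this exists because $X_T$ is stationary, so its lift to $H_{\theta'}$ is stationary. Then build an $M$-generic descending sequence below $p$ and cap it with $M\cap H_\theta$. This yields $q\le p$ forcing $M\cap\omega_1\in\dot C\cap T$. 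Only ground model $T$ were needed, so this direction goes through.

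For ``$\Rightarrow$'', suppose $\forces_\A$``$\dot X_T$ is nonstationary'' fails for some stationary $T\in\V$. Then some $a\in\A$ forces $\dot X_T$ nonstationary, witnessed by a name for a function $F:[H_\theta]^{<\omega}\to H_\theta$ whose closure points avoid $X_T$. In $\V[G]$ with $a\in G$, any generic chain $\seq{M_\xi}{\xi<\omega_1}$ through $X$ for $\P_X$ has a club of $\xi$ with $M_\xi$ closed under $F$ and $M_\xi\cap\omega_1=\xi$, by Fact \ref{fact:P_SisCountablyDistributiveAndExhaustive}. For such $\xi$ we have $M_\xi\in X\setminus X_T$, so $\xi\notin T$. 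Hence $T$ is killed by $\A*\P_{\dot X}$ below $a$, contradicting stationary set preservation.

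The main obstacle is the bookkeeping in the ``$\Leftarrow$'' direction. One point is that the elementary submodel must be taken in $\V[G]$, which is harmless. The other is ensuring that stationarity of $X_T$ in $[H_\theta]^\omega$ yields models $M\prec H_{\theta'}$ with $M\cap H_\theta\in X_T$; this is the standard lifting of stationary sets. The other direction is routine.
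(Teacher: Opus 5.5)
Your proposal is correct and follows essentially the same route as the paper. You fix a ground-model stationary $T$, pass to $V^{\mathbb{A}}$, and prove the local Feng--Jech equivalence there: the countable elementary submodel capped by $M\cap H_\theta\in X_T$ gives one direction, and the generic chain through $X$ yielding a club of $\xi$ with $M_\xi\cap\omega_1=\xi\notin T$ gives the other. The only difference is that you use a function $F$ (closure under which is absolute) where the paper uses a club $Z$ that stays club by countable distributivity of $\mathbb{P}_X$; there is also a slip where your ``$\Rightarrow$'' should begin by assuming $\Vdash_{\mathbb{A}}$``$\dot X_{\check T}$ is stationary'' fails, and countable distributivity of $\mathbb{A}$ is never actually used in your argument.
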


\begin{proof}
Fix a stationary set $T\sub\omega_1$. Then $\A*\P_{\dot{X}}$ preserves the stationarity of $T$ iff whenever $I\sub\A$ is generic, $\P_{\dot{X}^I}$ preserves the stationarity of $T$ over $\V[I]$ (since $\A$ is stationary set preserving by assumption). This latter condition is equivalent to saying that in $\V[I]$, $(\dot{X}^I)_T$ is stationary. Indeed, the equivalence between the projective stationarity of some $X\sub[H_\theta]^\omega$ and the preservation of stationary sets by $\P_X$ stated in \cite[Fact 2.9]{Fuchs:CanonicalFragmentsOfSRP} holds locally. One direction of this equivalence can be seen by perusing the proof of \cite[Theorem 1.1]{FengJech:ProjectiveStationarityAndSRP}, showing that if $X_T$ is stationary, then $\P_X$ preserves the stationarity of $T$. The argument of the converse is sketched after that proof; here are the details: suppose $\P_X$ preserves the stationarity of $T$, but assume towards a contradiction that $X_T$ is not stationary. Let $Z\sub[H_\theta]^\omega$ be a club disjoint from $X_T$. If $\seq{M_\xi}{\xi<\omega_1}$ is a $\P_X$-generic sequence, then $\{M_\xi\st\xi<\omega_1\}$ is also club in $[H^\V_\theta]^\omega$. Note that $\P_X$ is countably distributive, so $[H_\theta^\V]^\omega$ does not change, and $Z$ remains club. So $W=Z\cap\{M_\xi\st\xi<\omega_1\}$ is a club subset of $X$, but disjoint from $X_T$. Hence, for all $M\in W$, $M\cap\omega_1\notin T$. Thus, $\{M\cap\omega_1\st M\in W\}$ is a club subset of $\omega_1$ disjoint from $T$, so the stationarity of $T$ is not preserved by $\P_X$, a contradiction.
\end{proof}

The following concept will be useful.

\begin{defn}
\label{def:PotentialStationarity}
Let $\A$ be a forcing notion, $\lambda$ an ordinal, and let $\dot{X}$ be an $\A$-name such that $\forces_\A\text{``$\dot{X}$ is a stationary subset of $[\bigcup\dot{X}]^\omega$, and $\check{\lambda}\sub\bigcup{\dot{X}}$.''}$
Then we say that $\dot{X}$ is \emph{$\A$-potentially stationary above $\lambda$.}	
\end{defn}

Just like Lemma \ref{lem:WhenT_APreservesGenStationarity} provided a criterion for the preservation of generalized stationarity by the forcing to kill a stationary set, the following lemma gives us a criterion for the preservation of $\A$-potential stationarity.

\begin{lem}
\label{lem:PreservationOfPotentialStationarity}
Let $\lambda>\omega_1$ be a regular cardinal, and let $S\sub S^\lambda_\omega$ be stationary. Let $\A$ be a countably distributive forcing, and let $\dot{X}$ be an $\A$-name.
Let $\T_S$ be the forcing to kill the stationarity of $S$. 
If the canonical $\A$-name for $\dot{X}\ohne\lifting(\check{S},[\bigcup\dot{X}]^\omega)$ is $\A$-potentially stationary above $\lambda$, then in any forcing extension by $\T_S$,  $\dot{X}$ is $\A$-potentially stationary above $\lambda$.
\end{lem}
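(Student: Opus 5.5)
We want: in $\V^{\T_S}$, $\forces_\A$ ``$\dot X$ is stationary in $[\bigcup\dot X]^\omega$ and $\lambda\sub\bigcup\dot X$''. Here $\A$ and $\dot X$ are interpreted in $\V^{\T_S}$ via the same name. The plan is to reduce to Lemma~\ref{lem:WhenT_APreservesGenStationarity}, applied in $\V^\A$ rather than in $\V$, by commuting the two forcings. $\T_S$ and $\A$ are both in $\V$, so $\T_S\times\A$ is the relevant product. Forcing with $\T_S$ and then $\A$ gives the same extensions as forcing with $\A$ and then $\T_S$, where in the latter order the second forcing is $\T_S$ as computed in $\V$.

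\textbf{Step 1: the setup is still valid in $\V[I]$.} Fix $I\sub\A$ generic over $\V$ and put $X=\dot X^I$ and $H=\bigcup X$. By hypothesis, in $\V[I]$ the set $X\ohne\lifting(S,[H]^\omega)$ is stationary in $[H]^\omega$ and $\lambda\sub H$. Since $\A$ is countably distributive, it adds no new $\omega$-sequences. Hence $S^\lambda_\omega$ and the forcing $\T_S$ (closed bounded subsets of $\lambda$ avoiding $S$) are computed the same way in $\V[I]$ as in $\V$.

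\textbf{Step 2: the hypotheses of Lemma~\ref{lem:WhenT_APreservesGenStationarity} in $\V[I]$.} We must check the following.
\begin{itemize}
\item $\lambda$ is still regular. Every $\alpha\in S$ has countable cofinality, so $\lambda\notin S$. The generic of $\T_S$ over $\V[I]$ then gives a club in $\lambda$ avoiding $S$, which requires $\lambda$ to keep uncountable cofinality.
\item $\lambda\ohne S$ is unbounded. This is clear, since it contains all ordinals of cofinality $\omega_1$ below $\lambda$, or failing that, all successor ordinals.
\item $\T_S$ is countably distributive in $\V[I]$. Stationarity of $X\ohne\lifting(S,[H]^\omega)$ gives stationarity of $S^\lambda_\omega\ohne S$ in $\V[I]$. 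Indeed, if $C$ were a club in $\lambda$ contained in $S\cup S^\lambda_{\ne\omega}$, then the set of $x$ with $\sup(x\cap\lambda)\in C$ contains a club in $[H]^\omega$, and there $\sup(x\cap\lambda)\in S$. The remark in Lemma~\ref{lem:WhenT_APreservesGenStationarity} then yields countable distributivity, once we also know $S$ is stationary there. If $S$ is nonstationary in $\V[I]$, the argument is even easier: $\T_S$ has a countably closed dense set, and the lemma's equivalence still holds.
\end{itemize}

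\textbf{Step 3: apply the lemma and conclude.} Item~\ref{item:SminusAisStationary}$\Rightarrow$\ref{item:StationarityOfSisPreserved} of Lemma~\ref{lem:WhenT_APreservesGenStationarity} gives that $\T_S$ preserves the stationarity of $X$ over $\V[I]$. So for any $K$ that is $\T_S$-generic over $\V[I]$, $X$ is stationary in $[H]^\omega$ in $\V[I][K]$. The conditions $\lambda\sub H=\bigcup X$ are absolute. By the product lemma, every $\V[K][I]$ with $K$ generic for $\T_S$ over $\V$ and $I$ generic for $\A$ over $\V[K]$ arises this way. Therefore in $\V[K]$ it is forced by $\A$ that $\dot X$ is stationary in $[\bigcup\dot X]^\omega$ with $\lambda\sub\bigcup\dot X$, which is the statement.

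\textbf{Main obstacle.} I expect the difficulty to be Step~2, namely verifying the distributivity and regularity hypotheses in $\V[I]$, together with the product-lemma bookkeeping. One subtlety is that $\A$ evaluated in $\V[K]$ is the same poset as in $\V$. The statement is about the name $\dot X$ over $\V[K]$, so we need $\T_S$ to add no new elements to $\A$, or else read the statement as being about the ground-model poset $\A$. The second reading is the natural one, and it suffices.
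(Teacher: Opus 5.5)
There is a genuine gap in Step~1, and it is the central difficulty of the lemma. After commuting the forcings, the poset you force with over $\V[I]$ is $\T=(\T_S)^\V$, as you correctly note in your opening paragraph. But you then claim that countable distributivity of $\A$ makes $\T_S$ ``computed the same way'' in $\V[I]$, and this is false. Conditions of $\T_S$ are closed bounded subsets of $\lambda$, which may be uncountable, and a countably closed $\A$ can add new ones.

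For example, $\Add(\omega_1,1)$ adds a new club $E\sub\omega_1$. If $c=\{\gamma_\xi\st\xi\le\omega_1\}\in(\T_S)^\V$ is closed of order type $\omega_1+1$, then $\{\gamma_\xi\st\xi\in E\}\cup\{\gamma_{\omega_1}\}$ is a closed bounded subset of $\lambda$ disjoint from $S$ that is not in $\V$. Since extension is end-extension, no ground-model condition lies below it. So $(\T_S)^\V$ is not even dense in $(\T_S)^{\V[I]}$, and Lemma~\ref{lem:WhenT_APreservesGenStationarity} applied in $\V[I]$ tells you about the wrong poset. The subtlety you flag at the end, about $\A$ in $\V[K]$, is harmless; the real issue is this one.

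A secondary problem is that $\lambda$ need not stay regular in $\V[I]$: for instance, $\Col{\omega_1,\lambda}$ is countably closed. So the hypotheses of that lemma can fail as well, and your argument via the $\T_S$-generic club does not establish regularity.

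The missing step is to bypass the lemma and rerun its (1)$\Rightarrow$(2) argument directly for $\T=(\T_S)^\V$ over $\V[I]$, as the paper does. Suppose $p\in\T$ and a $\T$-name $\dot f$ in $\V[I]$ are such that $p$ forces that no $x\in X$ is closed under $\dot f$. Take a countable $M\prec H_\chi^{\V[I]}$ containing $S,\T,p,\dot f,X,H$ with $x=M\cap H\in X\ohne\lifting(S,[H]^\omega)$. Then $\alpha=\sup(x\cap\lambda)\notin S$, and $\alpha<\lambda$ because only uncountable cofinality of $\lambda$ in $\V[I]$ is needed.

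Next, build a descending sequence $\langle t_n\st n<\omega\rangle$ in $\T\cap M$ below $p$ that meets every dense subset of $\T$ lying in $M$. Countable distributivity of $\A$ is used exactly here. The sequence is built in $\V[I]$, but it lies in $\V$, so $q=\bigcup_n t_n\cup\{\alpha\}$ is a condition of the ground-model poset $\T$. By elementarity, $q$ forces $x$ to be closed under $\dot f$, which contradicts what $p$ forced.
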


\begin{proof}
We have to show that in any $\T_S$-generic forcing extension, $\A$ forces that $\dot{X}$ is stationary and $\check{\lambda}\sub\bigcup\dot{X}$. Write $\T$ for the forcing $\T_S$.
Turning the order of forcing around, letting $I\sub\A$ be an arbitrary generic filter, and setting $X=\dot{X}^I$ and $H=\bigcup X$, it suffices to show that

\begin{NumberedClaim}
\label{NC:OldT_ApreservesS}
In $\V[I]$, $\T$ forces that $\check{X}$ is stationary in $[\check{H}]^\omega$
\end{NumberedClaim}
since we already know by assumption that $\lambda\sub H$. Note that $\T$ is the forcing to kill the stationarity of $S$ as defined in $\V$, which is different from that forcing as defined in $\V[I]$, that is, $\T=(\T_S)^\V\neq(\T_S)^{\V[I]}$. So we cannot appeal to Lemma \ref{lem:WhenT_APreservesGenStationarity}.

To prove \ref{NC:OldT_ApreservesS}, assume the contrary. So let $\dot{f}\in\V[I]^{\T}$ be a $\T$-name and $p\in\T$ a condition such that
\[p\forces_{\T}^{\V[I]}\text{``$\dot{f}:\check{H}^{{<}\omega}\To\check{H}$ and whenever $x\in[\check{H}]^\omega$ is closed under $\dot{f}$, $x\notin\check{X}$.''}\]
By assumption, and arguing in $\V[I]$, we know that $\tilde{X}=X\ohne\lifting(S,[H]^\omega)$ is stationary. Hence, letting $\chi$ be some sufficiently large regular cardinal, so is $\tilde{X}\lift[H_\chi^{\V[I]}]^\omega$. So let $M$ be a countable set such that $S,\T,p,H,X,\dot{f} \in M$, $\kla{M,{\in}} \prec \kla{H_\chi^{\V[I]},{\in}}$, and $x:=M\cap H\in\tilde{X}$. Let $\alpha=\sup(x\cap\lambda)$. Since $\A$ does not add any new $\omega$-sequences of ordinals, the cofinality of $\lambda$ in $\V[I]$ is still uncountable, so $\alpha<\lambda$, and since $x\notin\lifting(S,[H]^\omega)$, $\alpha\notin S$. Let $g\sub\T\cap M$ be $M$-generic for $\T$ with $p\in g$, generated by a decreasing sequence $\seq{t_n}{n<\omega}$ of conditions in $\T\cap M$ which is defined so that, once an enumeration $\seq{D_n}{n<\omega}$ of the dense subsets of $\T$ which exist in $M$ is fixed, $t_n\in D_n$, for every $n<\omega$. Since $\A$ is countably distributive, the sequence $\seq{t_n}{n<\omega}$ is in $\V$. Note that $\bigcup g=\bigcup_{n<\omega}t_n$ is cofinal in $\alpha$. Hence, as $\alpha\notin S$, the set $q=( \bigcup g )\cup\{\alpha\}$ is a condition in $\T$. Also, by elementarity, $x$ is closed under $\dot{f}^g$, because if $a_0,\ldots,a_{m-1} \in x$, then the set $D$ of conditions which are either incompatible with $p$ or decide the value of $\dot{f}(\check{a}_0,\ldots,\check{a}_{m-1})$ is dense, and it is in $M$, so, since $g$ is generated by $\{t_n\st n<\omega\}$, there is an $n$ such that $r=t_n\in D$. But then $\dot{f}^g(a_0,\ldots,a_{m-1})$ is the unique $b\in H$ such that $r\forces_\T\dot{f}(\check{a}_0,\ldots,\check{a}_{m-1})=\check{b}$. Since $r,\dot{f},\vec{a},\T,H\in M$, it follows by elementarity that $b$ is definable from these parameters in $M$, so $b\in M$, hence $b\in x$, as $b\in H$.

But, if $J\sub\T$ is generic over $\V[I]$, with $q\in J$, then $g\sub J$, and $\dot{f}^J\rest x^{{<}\omega}=\dot{f}^g$. So $x$ is closed under $\dot{f}^J$, and $x\in\tilde{X}$, contradicting what $p$ forced.
\end{proof}

\begin{lem}
\label{lem:PreservationOfMediatedProjectiveStationarity}
Let $\theta\ge\lambda>\omega_1$ be regular cardinals, and let $S\sub S^\lambda_\omega$ be stationary. Let $\A$ be a $\sigma$-closed forcing, and let 
$\dot{X}$ be an $\A$-name such that $\forces_\A\bigcup\dot{X}=H_{\check{\theta}}$ (that is, if $I$ is $\A$-generic, then $\bigcup\dot{X}^I=H_\theta^{\V[I]}$). Let $\T_S$ be the forcing to kill the stationarity of $S$, and suppose that $\T_S$ is countably distributive.
If for every stationary $T\sub\omega_1$,
\[\forces_\A\text{``$\dot{X}_{\check{T}}\ohne\lifting(\check{S},[\bigcup\dot{X}]^\omega)$ is stationary''}\]
then $\T_S\times(\A*\P_{\dot{X}})$ is stationary set preserving.
\end{lem}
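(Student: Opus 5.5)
The plan is to rewrite the product as $\A*(\T_S\times\P_{\dot X})$, or better as the two-step iteration $\T_S*(\A*\P_{\dot X})$, since $\T_S\in\V$ and $\A\in\V$ commute. Two things need to be checked. First, $\T_S$ itself is stationary set preserving. Second, in $\V^{\T_S}$ the iteration $\A*\P_{\dot X}$ is stationary set preserving.

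For the first point I will use countable distributivity of $\T_S$. Because it adds no new countable sets, any $\T_S$-name for a club in $\omega_1$ can be met inside a countable elementary submodel by building an $M$-generic sequence. The standard argument then goes through once we know that $\T_S$ is $\omega_1$-strategically closed or adds no new $\omega$-sequences. It is cleaner, though, to absorb this into the second step. A stationary $T\subseteq\omega_1$ in $\V$ is preserved by the whole product iff, in $\V^{\T_S}$, $\A*\P_{\dot X}$ preserves $T$ and $\T_S$ preserves $T$.

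So I will argue as follows. Let $J\subseteq\T_S$ be generic. In $\V[J]$, $\A$ is still $\sigma$-closed (by countable distributivity of $\T_S$, no new countable sequences of conditions appear). In particular $\A$ remains countably distributive and stationary set preserving over $\V[J]$, provided stationary subsets of $\omega_1$ in $\V[J]$ are handled. By Observation~\ref{obs:MediatedProjectiveStationarity} applied in $\V[J]$, it then suffices to show that for every stationary $T\subseteq\omega_1$ in $\V[J]$, $\forces_\A$ ``$\dot X_{\check T}$ is stationary''. Since $\T_S$ is countably distributive, I will first reduce to $T\in\V$. The key point is to show that $T$ stationary in $\V[J]$ implies $T\in\V$ is stationary there; the subtle part is that $T$ itself might be new. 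I would handle this by running the Lemma~\ref{lem:PreservationOfPotentialStationarity}-style argument with names: fix a $\T_S$-name $\dot T$ and a condition $p$, and pass to the ground-model set $T'=\{\xi : \exists q\le p\; q\forces \xi\in\dot T\}$.

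The main tool is Lemma~\ref{lem:PreservationOfPotentialStationarity}, applied with $\dot X_{\check T}$ in place of $\dot X$. The hypothesis says exactly that the canonical $\A$-name for $\dot X_{\check T}\setminus\lifting(\check S,[\bigcup\dot X]^\omega)$ is $\A$-potentially stationary above $\lambda$ (as $\lambda\subseteq H_\theta$). The lemma then gives that in $\V[J]$, $\dot X_{\check T}$ is $\A$-potentially stationary, i.e.\ $\forces_\A$ ``$\dot X_{\check T}$ is stationary''. Note that $\bigcup\dot X$ is $H_\theta^{\V[I]}$, not the one of $\V[J][I]$; Observation~\ref{obs:MediatedProjectiveStationarity} needs only stationarity in $[\bigcup \dot X]^\omega$, so this is harmless. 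The same lemma with $T=\omega_1$ also shows that $\dot X$ remains a name for a stationary set, so $\P_{\dot X}$ makes sense in $\V[J]^\A$.

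The main obstacle is therefore the handling of stationary sets $T$ of $\V[J]$ that are not in $\V$, or more precisely ensuring that checking ground-model $T$ suffices. I would instead avoid it by doing a direct density argument in the product. Given a name $\dot C$ for a club, a condition $(t,a,\dot p)$, and a stationary $T\in\V$ (since we only need to preserve ground-model stationary sets for the product to be SSP), pick a countable $M\prec H_\chi^{\V[I]}$ with $M\cap H_\theta\in X_T\setminus\lifting(S,\cdot)$. Then build an $M$-generic sequence for $\T_S$ capped by $\sup(M\cap\lambda)\notin S$, exactly as in the proof of Lemma~\ref{lem:PreservationOfPotentialStationarity}. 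Combine this with $M$-generic sequences for $\A$ and $\P_{\dot X}$, capped by $M\cap H_\theta$ as in the proof of Claim~\ref{NC:BxiStationary}. The result is a lower bound forcing $M\cap\omega_1\in T\cap\dot C$.
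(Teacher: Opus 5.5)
Your middle paragraph, which applies Lemma~\ref{lem:PreservationOfPotentialStationarity} to the name for $\dot X_{\check T}\ohne\lifting(\check S,[\bigcup\dot X]^\omega)$ so that in $\V[G]$ (your $\V[J]$) $\A$ forces $\dot X_{\check T}$ to be stationary, is exactly how the paper's proof begins. The ``main obstacle'' you then see is not there. Stationary set preservation of $\T_S\times(\A*\P_{\dot X})$ only concerns stationary $T\sub\omega_1$ lying in $\V$. Moreover, the Feng--Jech direction of Observation~\ref{obs:MediatedProjectiveStationarity} is local in $T$: stationarity of $X_T$ already gives that $\P_X$ preserves that particular $T$. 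So the paper fixes a ground-model $T$ and notes $(\P_X)^{\V[I]}=(\P_X)^{\V[G][I]}$, since $\T_S\times\A$ is countably distributive. It then runs the Feng--Jech density argument in $\V[G][I]$ with some $M\prec H_\chi^{\V[G][I]}$ satisfying $M\cap H\in X_T$. You do not need to apply the Observation globally in $\V[J]$, nor to reduce new stationary sets to ground-model ones; that reduction does not follow from countable distributivity anyway. Your opening claim is also false in general: adding no new $\omega$-sequences does not make the usual stationarity-preservation argument work. Shooting a club through a stationary co-stationary subset of $\omega_1$ is countably distributive yet kills a stationary set. For $\T_S$ one needs stationarily many $M$ with $\sup(M\cap\lambda)\notin S$, which is what the hypothesis supplies.

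Your fallback, a single density argument, is a genuinely different and workable route, but as written it mishandles $\A$. You choose $M\prec H_\chi^{\V[I]}$ after, and depending on, $I$, so you cannot also build an $M$-generic sequence for $\A$ and take a lower bound. That bound need not lie in $I$, and for another generic $I'$ there is no reason for $M\cap H_\theta^{\V[I]}$ to be in $\dot X^{I'}$. The repair is to use that $\T_S\times(\A*\P_{\dot X})$ is equivalent to $\A*(\check{\T}_S\times\P_{\dot X})$ and that $\A$, being $\sigma$-closed, preserves $T$. It then suffices to show in $\V[I]$ that $\T_S\times\P_X$ preserves $T$. There, with $H=H_\theta^{\V[I]}$, take $M$ with $M\cap H\in X_T\ohne\lifting(S,[H]^\omega)$. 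Build one decreasing sequence $\langle (t_n,p_n)\mid n<\omega\rangle$ in $M$ meeting every dense subset of the product that lies in $M$; separately $M$-generic sequences for the two factors would not suffice. Note that $\langle t_n\mid n<\omega\rangle\in\V$ by $\sigma$-closedness of $\A$, and cap with $\sup(M\cap\lambda)\notin S$ and with $M\cap H\in X$. This fuses the proofs of Lemma~\ref{lem:PreservationOfPotentialStationarity} and of Feng--Jech into one step. It avoids $\A$-potential stationarity and the identification of $\P_X$ computed in $\V[I]$ versus $\V[G][I]$, and it never uses the countable distributivity of $\T_S$. The paper's route is instead modular, factoring through Lemma~\ref{lem:PreservationOfPotentialStationarity}.
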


\begin{proof}
Let $T\sub\omega_1$ be stationary, and let $G\sub\T_S$ be generic over $\V$.
By our assumption, the canonical $\A$-name for $\dot{X}_T\ohne\lifting(\check{S},[\bigcup{\dot{X}}]^\omega)$ is $\A$-potentially stationary above $\lambda$.

By Lemma \ref{lem:PreservationOfPotentialStationarity}, it follows that in $\V[G]$, $\dot{X}_{\check{T}}$ is $\A$-potentially stationary above $\lambda$. Note that this implies that $T$ is stationary in $\V[G]$.
In any case, letting $I\sub\A$ be generic over $\V[G]$, and setting $X=\dot{X}^I$ and $H=\bigcup X$, we have that in $\V[G][I]$, $X_T$ is stationary in $[H]^\omega$. Note that $H=H_\theta^{\V[I]}$. 

Working in $\V[G][I]$, we can now apply the argument of the proof of Feng \& Jech \cite[Theorem 1.1]{FengJech:ProjectiveStationarityAndSRP}.
Note that $(\P_X)^{\V[I]}=(\P_X)^{\V[G][I]}$, since  $\T_S\times\A$ is countably distributive, as $\A$ is still $\sigma$-closed in $\V[G]$, because $\T_S$ adds no new countable sequences. This implies that $\V[G][I]=\V[I][G]$ has no new $\omega$-sequence of elements of $\V[I]$. So we are justified in just writing $\P_X$ for this poset.

Thus, towards a contradiction, suppose $p\in\P_X$ is a condition and $\dot{C}$ is a $\P_X$-name such that
\[p\forces_{\P_X}^{\V[G][I]}\text{``$\dot{C}\sub\omega_1$ is club and disjoint from $\check{T}$.''}\]
Let $\chi$ be a sufficiently large regular cardinal, and let $M$ be a countable set such that $H,X,T,\dot{C},p \in M$, $\kla{M,{\in}} \prec \kla{H_\chi^{\V[G][I]},{\in}}$, and $x:=M\cap H\in X_T$ (using the fact that $X_T$ is stationary in $[H]^\omega$ in $\V[G][I]$). So $\zeta=x\cap\omega_1=M\cap\omega_1\in T$.

Let $\seq{D_n}{n<\omega}$ enumerate all dense subsets of $\P_X$ that are in $M$, and build a decreasing sequence $\seq{p_n}{n<\omega}$ in $\P_X \cap M$ such that $p_0\le p$ and for all $n<\omega$, $p_n\in D_n$. Let $q'=\bigcup_{n<\omega}p_n$.

Since for every $\xi<\omega_1$, the set of conditions in $\P_X$ of length at least $\xi$ is dense, the length of $q'$ is $\zeta$. Since for every $a\in H$, the set of conditions $r$ such that $a\in\bigcup\ran(r)$ is dense, $\bigcup\ran(q')=x\in X$. Hence, $q=q'\verl\kla{x}$ is a condition in $\P_X$ that extends $p$. Since for every $\xi<\omega_1$, the set of conditions $r$ such that there is an $\eta\ge\xi$ with $r\forces_{\P_X}\text{``$\check{\eta}\in\dot{C}$''}$ is dense, it follows that $q$ forces that $\check{\zeta}$ is a limit point of $\dot{C}$, and hence that $\check{\zeta}$ is in $\dot{C}$. So since $\zeta\in T$, $q$ is an extension of $p$ forcing that $\dot{C}\cap\check{T}\neq\leer$, contradicting what $p$ forced.
\end{proof}

We are now ready to prove the main result of this subsection. It draws and improves on the separation result
obtained by Fuchs \& Lambie-Hanson \cite[Theorem 4.4]{Fuchs-LambieHanson:SeparatingDSR}.

\begin{thm}
\label{thm:EnhancedSRPDoesntImplyuDSR}
Assume $\MM$, and let $\lambda>\omega_2$ be a regular cardinal. Then there is a cofinality preserving forcing extension $W$ of $\V$ in which $\uDSR(1,S^\lambda_\omega)$ fails, yet $\FA_{\omega_1}(\Gamma)$ holds, where, in $W$, $\Gamma$ consists of all stationary set preserving forcing notions of the form
$\A*\P_{\dot{X}}$ such that $\A$ is $\sigma$-closed and $\lambda$-c.c., and $\dot{X}$ is an $\A$-name for a stationary subset of $[H_{\check{\mu}}]^\omega$, for some regular $\mu$ (that is, if $I\sub\A$ is $\A$-generic over $W$, then $\mu>\omega_1$ is regular in $W[I]$ and $\dot{X}^I$ is a stationary subset of $[H_\mu^{W[I]}]^\omega$).
\end{thm}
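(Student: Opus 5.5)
I will follow the template of Theorem \ref{thm:ALaLarson}, with the preservation Theorem \ref{thm:AbstractPreservation} as the engine. Lemmas \ref{lem:PreservationOfPotentialStationarity} and \ref{lem:PreservationOfMediatedProjectiveStationarity} handle the extra two-step structure $\A*\P_{\dot{X}}$.

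\emph{The model.} Let $\P$ be the forcing from \cite{Fuchs-LambieHanson:SeparatingDSR} that adds a counterexample to $\uDSR(1,S^\lambda_\omega)$. Conditions are bounded initial segments of a sequence $\vS=\seq{S_\alpha}{\alpha<\lambda}$ of subsets of $S^\lambda_\omega$, with approximations of size ${<}\lambda$. The requirement is that no $\rho<\lambda$ of uncountable cofinality, together with an unbounded $R\sub\rho$, has $S_\alpha\cap\rho$ stationary for all $\alpha\in R$. I arrange that the $S_\alpha$ partition $S^\lambda_\omega$. Since $\P$ is $\sigma$-closed and ${<}\lambda$-strategically closed with $\lambda^{<\lambda}=\lambda$ (which holds under \MM for $\lambda>\omega_2$), it preserves cofinalities and $W=\V[\vS]$ is the desired model. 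Two facts have to be checked from that paper:
\begin{enumerate}[label=(\roman*)]
\item $\uDSR(1,S^\lambda_\omega)$ fails in $W$;
\item for each $\alpha$, $\P*\dot{\T}_{\dot{S}_\alpha}$ has a dense $\omega_2$-directed closed (indeed ${<}\lambda$-directed closed) subset $\U$, consisting of pairs $\kla{p,\dot t}$ with $\dot t$ decided by $p$ and $\max t$ equal to the length of $p$.
\end{enumerate}
Since \MM holds in $\V$ and $\Gamma$ there contains all stationary set preserving forcings, Theorem \ref{thm:AbstractPreservation} applies once, in $W$, we find for a given $\A*\P_{\dot{X}}\in\Gamma$ some $\alpha<\lambda$ with $\T_{S_\alpha}\times(\A*\P_{\dot{X}})$ stationary set preserving. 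Then $\P*(\dot\Q\times\dot\T_{\dot S_\alpha})$ is stationary set preserving below a suitable condition, and restricting to that condition is harmless.

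\emph{Choosing $\alpha$.} Work in $W$ and fix $\A*\P_{\dot X}\in\Gamma$. By Observation \ref{obs:MediatedProjectiveStationarity}, for every stationary $T\sub\omega_1$, $\A$ forces that $\dot X_T$ is stationary. By Lemma \ref{lem:PreservationOfMediatedProjectiveStationarity} (and countable distributivity of $\T_{S_\alpha}$, which follows from (ii)), it suffices to find $\alpha$ such that
\[\forces_\A\text{``}\dot X_{\check T}\ohne\lifting(\check S_\alpha,[H_\mu]^\omega)\text{ is stationary''}\]
for every stationary $T\sub\omega_1$. If $\mu<\lambda$ this is vacuous, since I can then work with the lift to $[H_{\max(\mu,\lambda)}]^\omega$. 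Suppose there is no such $\alpha$. Then for each $\alpha<\lambda$ pick a stationary $T_\alpha$ and a condition $a_\alpha\in\A$ forcing that $\dot X_{T_\alpha}\ohne\lifting(S_\alpha)$ is nonstationary.

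\emph{Main obstacle.} I need to find $\alpha_0\neq\alpha_1$ such that $T_{\alpha_0}\cap T_{\alpha_1}$ is stationary and $a_{\alpha_0},a_{\alpha_1}$ are compatible. Then, exactly as in the proof of Theorem \ref{thm:ALaLarson}, a common extension forces that $\dot X_{T}$, with $T=T_{\alpha_0}\cap T_{\alpha_1}$, is covered by two nonstationary sets, because the $S_\alpha$ are disjoint. This contradicts $\dot X_T$ being stationary in $W^\A$. Finding such a pair is the step I expect to be hardest, since we need simultaneous saturation of $\NS_{\omega_1}$ and the chain condition of $\A$. \MM gives that $\NS_{\omega_1}$ is $\omega_2$-saturated in $\V$, and this persists to $W$ because $\P$ adds no new subsets of $\omega_1$ (it is ${<}\lambda$-distributive with $\lambda>\omega_2$). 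So the map $\alpha\mapsto[T_\alpha]$ takes at most $2^{\omega_1}=\omega_2<\lambda$ many values modulo $\NS_{\omega_1}$, and by regularity of $\lambda$ some fixed stationary $T^*$ equals $T_\alpha$ modulo $\NS_{\omega_1}$ for $\lambda$ many $\alpha$. Among those $\lambda$ many conditions $a_\alpha$, the $\lambda$-c.c.\ of $\A$ gives two compatible ones, $a_{\alpha_0},a_{\alpha_1}$, and $T_{\alpha_0}\cap T_{\alpha_1}$ is then stationary. This is precisely why the hypotheses ``$\lambda$-c.c.'' and $\lambda>\omega_2$ appear. One subtlety to check carefully is that the statement ``$\dot X_T$ is stationary'' is forced by every condition, including below the common extension, which is what Observation \ref{obs:MediatedProjectiveStationarity} supplies.
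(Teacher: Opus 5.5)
Your overall architecture matches the paper's: Theorem \ref{thm:AbstractPreservation} combined with Lemma \ref{lem:PreservationOfMediatedProjectiveStationarity}. The gap is in the club-shooting forcing. Claim (ii) is false for the forcing $\P$ that adds a counterexample to $\uDSR(1,S^\lambda_\omega)$. There the requirement at each $\rho$ of uncountable cofinality is that only boundedly many $\beta<\rho$ have $S_\beta\cap\rho$ stationary, and a club avoiding a single $S_\alpha$ does nothing to secure this at its limit points.

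Concretely, take any dense $\U\sub\P*\dot{\T}_{\dot{S}_\alpha}$ and build a chain $\seq{\kla{p_i,t_i}}{i<\omega_1}$ in $\U$ with lengths $\gamma_i>\alpha$. Fix a partition of the countable limit ordinals into stationary pieces $A_k$, $k<\omega_1$.
\begin{itemize}
\item At a limit stage $i\in A_k$ with $i>k$, first extend $\bigcup_{j<i}p_j$ by putting $\gamma^*_i=\sup_{j<i}\gamma_j$, which has countable cofinality, into $S_{\gamma_k}$. For $i\le k$, put it into $S_{\gamma_0}$ instead.
\item Add $\gamma^*_i$ to the club part, and only then extend into $\U$.
\end{itemize}
At $\gamma=\sup_i\gamma_i$, every $S_{\gamma_k}\cap\gamma$ is stationary, so the chain has no lower bound. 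Hence Theorem \ref{thm:AbstractPreservation} cannot be applied with $\T_{S_\alpha}$.

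The fact you borrowed from Theorem \ref{thm:ALaLarson} is special to the $\OSR_{\omega_2}$-counterexample, where it suffices that one $S_\alpha$ with $\alpha<\rho$ fails to reflect at $\rho$. This is why the paper kills tails $S_{{\ge}\alpha}=\bigcup_{\alpha\le\beta<\lambda}S_\beta$. A club avoiding $S_{{\ge}\alpha}$ leaves only indices below $\alpha$ able to reflect at its limit points, and \cite[Claim 3.13]{Fuchs-LambieHanson:SeparatingDSR} then gives the dense $\omega_2$-directed closed subset of $\P*\dot{\T}_{\dot{S}_{{\ge}\alpha}}$.

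Your selection of $\alpha$ is correct for single, pairwise disjoint $S_\alpha$; it does not even use saturation, only $2^{\omega_1}<\lambda$ and the $\lambda$-c.c. But it breaks once you switch to tails. The sets $\lifting(S_{{\ge}\alpha_0},[H]^\omega)$ and $\lifting(S_{{\ge}\alpha_1},[H]^\omega)$ are nested rather than disjoint, so two nonstationary complements no longer cover $\dot{X}_T$.

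The missing idea is the paper's diagonal argument. Suppose that in $W[I]$ the set $X\ohne\lifting(S_{{\ge}\alpha},[H]^\omega)$ were nonstationary for every $\alpha<\lambda$.
\begin{itemize}
\item A diagonal intersection of the witnessing clubs yields $x\in X$ with $\gamma=\sup(x\cap\lambda)\in S_{{\ge}\alpha}$ for all $\alpha\in x\cap\lambda$.
\item By disjointness, $\gamma\in S_\delta$ for some $\delta\ge\gamma$.
\item This contradicts $S_\delta\cap(\delta+1)=\leer$, a property of the generic sequence that your description of $\P$ omits.
\end{itemize}
Monotonicity in $\alpha$ then lets you take suprema in two places. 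First, over the fewer than $\lambda$ possible values of an $\A$-name for the least good $\alpha$; this is where the $\lambda$-c.c.\ enters. Second, over the $2^{\omega_1}=\omega_2<\lambda$ stationary $T\sub\omega_1$. No saturation of $\NS_{\omega_1}$ is used.

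Finally, the case $\mu<\lambda$ is not vacuous. After lifting $\dot{X}$ to $H_\lambda$, you must still transfer $\FA_{\omega_1}$ from the lifted poset back to $\A*\P_{\dot{X}}$. The paper does this by restricting to models closed under $z\mapsto z\cap H_\mu$ and translating dense sets.
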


\noindent\emph{Note:} $\FA_{\omega_1}(\Gamma)$ implies enhanced $\SRP^\FA$, since every forcing of the form $\Add(\omega_1,\theta)$ is countably closed and $\lambda$-c.c., as $\lambda>2^\omega$. So this theorem in particular separates enhanced $\SRP^\FA$ from $\uDSR(1,S^\lambda_\omega)$.

\begin{proof}
Let $\P$ be the $\lambda$-strategically closed forcing used in the proof of \cite[Theorem 3.10]{Fuchs-LambieHanson:SeparatingDSR} to add a counterexample to $\uDSR(1,S^\lambda_\omega)$, by initial segments. It adds a sequence $\seq{S_\alpha}{\alpha<\lambda}$ of pairwise disjoint stationary subsets of $S^{\lambda}_\omega$ such that $S_\alpha\cap(\alpha+1)=\leer$ and such that for every $\rho<\lambda$ of uncountable cofinality, the set $\{\alpha<\rho\st S_\alpha\cap\rho\ \text{is stationary in}\ \rho\}$ is bounded in $\rho$. So this sequence verifies the failure of $\uDSR(1,S^\lambda_\omega)$ in $W=\V[\vS]$, and it remains to show that $\FA_{\omega_1}(\Gamma)$ holds in $W$. For $\alpha<\lambda$, we write
\[S_{{\ge}\alpha}=\bigcup_{\alpha\le\beta<\lambda}S_\beta.\]

Work in $W$.
\begin{NumberedClaim}
\label{NC:ThereAlwaysIsAnAlpha}
Let $\dot{X}$ be $\A$-potentially stationary above $\lambda$, and let $\dot{H}$ be a canonical $\A$-name for the union of $\dot{X}$.
 Then
	\[\forces_\A\text{``$\exists\alpha<\lambda\quad\dot{X}\ohne\lifting(\check{S}_{{\ge}\alpha},[\dot{H}]^\omega)$ is stationary.''}\]
\end{NumberedClaim}	

\begin{proof}[Proof of \ref{NC:ThereAlwaysIsAnAlpha}. ]
If not, then let $I\sub\A$ be generic such that, letting $H=\dot{H}^I$ and $X=\dot{X}^I$, in $W[I]$, for every $\alpha<\lambda$, there is a club $Z_\alpha\sub[H]^\omega$ disjoint from $X\ohne\lifting(S_{{\ge}\alpha},[H]^\omega)$. That is, for all $\alpha<\lambda$, $X\cap Z_\alpha\sub\lifting(S_{{\ge}\alpha},[H]^\omega)$. By the normality of the generalized club filter (see \cite[Lemma 8.23]{ST3}), let $Z\sub[H]^\omega$ be club such that for all $x\in Z$, for all $\alpha\in x\cap\lambda$, $x\in Z_\alpha$. Pick $x\in X\cap Z$, and let $\gamma=\sup(x\cap\lambda)<\lambda$. Then, for all $\alpha\in x\cap\lambda$, $x\in Z_\alpha$, and hence, since $x\in X$, $x\in\lifting(S_{{\ge}\alpha},[H]^\omega)$, which means that $\gamma=\sup(x\cap\lambda)\in S_{{\ge}\alpha}$. Since the $S_\alpha$'s are pairwise disjoint, and since $x\cap\lambda$ is unbounded in $\gamma$, it follows that $\gamma\in S_{{\ge}\gamma}$. But, say $\delta\ge\gamma$ is such that $\gamma\in S_\delta$. Then $\gamma\in S_\delta\cap(\delta +1)=\leer$, a contradiction. \hspace*{\fill}$\qedhere_{\ref{NC:ThereAlwaysIsAnAlpha}}$
\end{proof}

\begin{NumberedClaim}
\label{NC:AFixedThetaAlwaysWorks}
Let $\dot{X}$, $\dot{H}$ be as in \ref{NC:ThereAlwaysIsAnAlpha}. Then
there is an $\alpha<\lambda$ such that
\[\forces_\A\text{``$\dot{X}\ohne\lifting(\check{S}_{{\ge}\alpha},[\dot{H}]^\omega)$ is stationary.''}\]
\end{NumberedClaim}

\begin{proof}[Proof of \ref{NC:AFixedThetaAlwaysWorks}. ]
By \ref{NC:ThereAlwaysIsAnAlpha}, let $\dot{\beta}$ be an $\A$-name such that
\[\forces_\A\text{``$\dot{\beta}<\check{\lambda}$ is least such that $\dot{X}\ohne\lifting(\check{S}_{{\ge}\dot{\beta}},[\dot{H}]^\omega)$ is stationary.''}\]
Let $B=\{\beta<\lambda\st\exists p\in\A\quad p\forces_\A\dot{\beta}=\check{\beta}\}$, and for each $\beta\in B$, let $p_\beta\forces_\A\dot{\beta}=\check{\beta}$. Clearly, if $\beta,\beta ' \in B$ are distinct, then $p_\beta$ and $p_{\beta '}$ are incompatible. So $P=\{p_\beta\st\beta\in P\}$ is an antichain of the same cardinality as $B$. Since $\A$ is $\lambda$-c.c., this cardinality is less than $\lambda$, and since $\lambda$ is regular, $\alpha=\sup B<\lambda$. Then $\alpha$ is as desired, because if $I\sub\A$ is generic, then by \ref{NC:ThereAlwaysIsAnAlpha}, there is a minimal $\beta<\lambda$ such that in $W[I]$, $\dot{X}^I\ohne\lifting(S_{{\ge}\beta},[\dot{H}^I]^\omega)$ is stationary. But then, $\beta\in B$, and hence, $\beta\le\alpha$. So $S_{{\ge}\alpha}\sub S_{{\ge}\beta}$, and hence, in $W[I]$, $\dot{X}^I\ohne\lifting(S_{{\ge}\alpha},[\dot{H}^I]^\omega)$ is stationary as well. \hspace*{\fill}$\qedhere_{\ref{NC:AFixedThetaAlwaysWorks}}$
\end{proof}

Now let $\A*\P_{\dot{X}}\in\Gamma$, where $\A$ and $\dot{X}$ are as described in the statement of the theorem. Let $\dot{H}$ be an $\A$-name such that $\forces_\A\bigcup\dot{X}=\dot{H}$, and let $\mu>\omega_1$ be such that $\forces_\A\dot{H}=H_{\check{\mu}}$.

Assume for now that $\mu\ge\lambda$.

\begin{NumberedClaim}
\label{NC:SomeThetaPreservesProjectiveStationarity}
There is an $\alpha<\lambda$ such that for every stationary $T\sub\omega_1$, (the canonical name for) $\dot{X}_{\check{T}}\ohne\lifting(S_{{\ge}\alpha},[\dot{H}]^\omega)$ is $\A$-potentially stationary.
\end{NumberedClaim}

\begin{proof}[Proof of \ref{NC:SomeThetaPreservesProjectiveStationarity}]
By assumption, $\A*\P_{\dot{X}}$ is stationary set preserving, and hence, so is $\A$. By Observation \ref{obs:MediatedProjectiveStationarity}, this means that for every stationary $T\sub\omega_1$, $\forces_\A\text{``$\dot{X}_{\check{T}}$ is stationary.''}$ Now, fix such a stationary $T$. By \ref{NC:AFixedThetaAlwaysWorks}, there is a $\alpha_T$ such that
\[\forces_\A\text{``$\dot{X}_{\check{T}}\ohne\lifting(\check{A}_{{\ge}\alpha_T},[\dot{H}]^\omega)$ is stationary.''}\]
Since $2^{\omega_1}=\omega_2<\lambda$, $\alpha=\sup\{\alpha_T\st T\sub\omega_1\ \text{is stationary}\}<\lambda$. It follows as before that $\alpha$ is as wished. \hspace*{\fill}$\qedhere_{\ref{NC:SomeThetaPreservesProjectiveStationarity}}$
\end{proof}

We can now finish the proof: let $\alpha$ be as in \ref{NC:SomeThetaPreservesProjectiveStationarity}. In $W$, let $\Q=\A*\P_{\dot{X}}$.
By Lemma \ref{lem:PreservationOfMediatedProjectiveStationarity}, in $W$, $\T_{S_{{\ge}\alpha}}\times\Q$ is stationary set preserving. Now back in $\V$, let $\dot{\T}$ be a $\P$-name for $\T_{S_{{\ge}\alpha}}$, and let $\dot{\Q}$ be a $\P$-name for $\Q$. Then $\P * ( \dot{\Q} \times \dot{\T} )$ is stationary set preserving. Also, by \cite[Claim 3.13]{Fuchs-LambieHanson:SeparatingDSR}, $\P * \dot{\T}$ has an $\omega_2$-directed closed dense subset. So by Theorem \ref{thm:AbstractPreservation}, $\FA_{\omega_1}(\{\Q\})$ holds in $W$.

 We still have to deal with the case that $\mu<\lambda$. This will be a straightforward, but somewhat tedious construction.

 Let $\dot{X}'$ be the canonical $\A$-name for the set of all $x\in\dot{X}\lift[H_{\check{\lambda}}]^\omega$ that are closed under the function $z\mapsto z\cap H_{\check{\mu}}$. That is, whenever $I\sub\A$ is generic over $W$, then
 \begin{ea*}
(\dot{X}')^I=\{x\sub H_\lambda^{W[I]}&\st&\text{$x$ is countable in $W[I]$, $x\cap H^{W[I]}_\mu\in \dot{X}^I$ and}\\ 
&&\text{for all $z\in x$, $z\cap H_\mu^{W[I]}\in x$}\}.
\end{ea*}

 \begin{NumberedClaim}
 \label{NC:A*PS'IsSSP}
 	$\A*\P_{\dot{X}'}$ is stationary set preserving.
 \end{NumberedClaim}

\begin{proof}[Proof of \ref{NC:A*PS'IsSSP}.]

Since $\A*\P_{\dot{X}}$ is stationary set preserving, we know by Observation \ref{obs:MediatedProjectiveStationarity} that $\A$ forces that $\dot{X}_{\check{T}}$ is stationary, for any stationary $T\sub\omega_1$.
So $\A$ forces that $\dot{X}_{\check{T}}\lift[H_{\check{\lambda}}]^\omega$ is stationary.

Let $\dot{Z}$ be a name for the set $\{x\in[H_{\check{\lambda}}]^\omega\st\text{$x$ is closed under $z\mapsto z\cap H_{\check{\mu}}$}\}$. Since this $\dot{Z}$ is forced to be club in $[H_{\check{\lambda}}]^\omega$, $\A$ forces that $\dot{X}_{\check{T}}\lift[H_{\check{\lambda}}]^\omega\cap\dot{Z}$ is stationary. But $\A$ forces that
$\dot{X}_{\check{T}}\lift[H_{\check{\lambda}}]^\omega\cap\dot{Z}
 = (\dot{X}\lift[H_{\check{\lambda}}]^\omega\cap\dot{Z})_{\check{T}}
 =\dot{X}'_{\check{T}}$.
Thus, $\A$ forces that $\dot{X}'_{\check{T}}$ is stationary, for every stationary $T\sub\omega_1$. By Observation \ref{obs:MediatedProjectiveStationarity} again, this means that $\A*\P_{\dot{X}'}$ is stationary set preserving.
  \hspace*{\fill}$\qedhere_{\ref{NC:A*PS'IsSSP}}$\end{proof}

Hence, by the above argument, we have:

\begin{NumberedClaim}
\label{NC:FAforA*PS'}
$\FA_{\omega_1}(\{\A*\P_{\dot{X}'}\})$ holds.
\end{NumberedClaim}

We will argue that this implies $\FA_{\omega_1}(\{\A*\P_{\dot{X}}\})$. To this end, we devise a way to lift an arbitrary dense set $D$ of conditions in $\A*\P_{\dot{X}}$ to a dense subset $D'$ of  $\A*\P_{\dot{X}'}$.

First, if $\kla{a,\dot{p}'}\in\A*\P_{\dot{X}'}$, then whenever $I\sub\A$ is generic over $W$ with $a\in I$, $(\dot{p}')^I$ is an increasing continuous $\in$-sequence $\vx'=\seq{x'_\xi}{\xi\le\zeta}$, for some $\zeta<\omega_1$, with $x'_\xi\in X'=(\dot{X}')^I$, and we can consider in $W[I]$ the sequence $\vx=\seq{x_\xi}{\xi\le\zeta}$ defined by $x_\xi=x'_\xi\cap H_\mu^{W[I]}$, for $\xi\le\zeta$. Then $\vx$ is an increasing continuous $\in$-chain: being increasing continuous is obvious, and for $\xi\leq\zeta$, we have $x'_\xi\in x'_{\xi+1}$, so since $x'_{\xi+1}\in X'$ and hence $x'_{\xi+1}$ is closed under the function $z\mapsto z\cap H_\mu^{W[I]}$, $x_\xi=x'_\xi\cap H_\mu^{W[I]}\in x'_{\xi+1}$. But since $x_\xi$ is a countable subset of $H_\mu^{W[I]}$, and since $\mu$ is a regular uncountable cardinal in $W[I]$, $x_\xi\in H_\mu^{W[I]}$. Thus, $x_\xi\in x'_{\xi+1}\cap H_\mu^{W[I]}=x_{\xi+1}$. Clearly, each $x_\xi$ is in $X=\dot{X}^I$, and so, $\vec{x}\in\P_X$. This construction can be carried out in $W[I]$ whenever $a\in I$, so there is a canonical name for $\vec{x}$, denote it $\kla{a,\dot{p}'}\sqcap H_\mu$.

Returning to our dense set $D\sub\A*\P_{\dot{X}}$, we can now define
\[D'=\{\kla{a,\dot{p}'}\in\A*\P_{\dot{X}'}\st\exists\dot{p}\ (\kla{a,\dot{p}}\in D\ \land\ a\forces_\A\dot{p}=\kla{a,\dot{p}'}\sqcap H_\mu)\}.\]

\begin{NumberedClaim}
\label{NC:D'isDense}
$D'\sub\A*\P_{\dot{X}'}$ is dense.
\end{NumberedClaim}

\begin{proof}[Proof of \ref{NC:D'isDense}]
Let $\kla{a,\dot{p}'}\in\A*\P_{\dot{X}'}$ be given. Let $\dot{p}=\kla{a,\dot{p}'}\sqcap H_\mu$. So $\kla{a,\dot{p}}\in\A*\P_{\dot{X}}$. Since $D$ is dense in $\A*\P_{\dot{X}}$, we can pick $\kla{b,\dot{q}}\le_{\A*\P_{\dot{X}}}\kla{a,\dot{p}}$ in $D$.

Let $I\sub\A$ be generic, with $b\in I$. Let $\vx'=\seq{x'_\xi}{\xi\le\zeta}=(\dot{p}')^I$, $\vx=\seq{x_\xi}{\xi\le\zeta} = \dot{p}^I$ and $\vy=\seq{y_\xi}{\xi\le\zeta^*} = \dot{q}^I$. Define $\vy'=\seq{y'_\xi}{\xi\le\zeta^*}$ by letting $y'_\xi=x'_\xi$ for $\xi\le\zeta$.
To define $y'_\xi$ for $\zeta<\xi\le\zeta^*$, suppose $y'_\eta$ has already been defined for $\eta<\xi$. If $\xi$ is a limit ordinal, then let $y'_\xi=\bigcup_{\eta<\xi}y'_\eta$. Now suppose $\xi=\eta+1$. Then let $y'_\xi=y_\xi\cup y'_\eta \cup \{ y'_\eta \}$.

Let $X=\dot{X}^I$, $X'=(\dot{X}')^I$, $H=H_\mu^{W[I]}$. We claim that $\vec{y}'\in\P_{X'}$. $\vy '$ is increasing continuous by definition. It is an $\in$-chain: let $\xi\leq\zeta^*$. If $\xi<\zeta$, then $y'_\xi=x'_\xi\in x'_{\xi+1}=y'_{\xi+1}$. If $\zeta\leq\xi<\zeta^*$ then $y'_\xi\in y'_{\xi+1}$ by definition.

Further, for $\xi\leq\zeta^*$, $y'_\xi\cap H=y_\xi$: this is clear for $\xi\le\zeta$ since then $y'_\xi\cap H=x'_\xi\cap H=x_\xi=y_\xi$. If $\zeta<\xi\le\zeta^*$, then we proceed by induction. If $\xi$ is a limit ordinal then it is clear by the continuity of $\vy$. So suppose $\xi=\eta +1$. Then $y'_\xi\cap H=(y_\xi\cup y'_\eta \cup \{ y'_\eta \})\cap H$. Now there are two possibilities. First, it could be that $y'_\eta\in H$. In this case $y'_\eta=y_\eta$, and it follows that $y'_\xi=y_\xi\cup y'_\eta \cup\{ y'_\eta \}=y_\xi\cup y_\eta \cup \{ y_\eta \} = y_\xi$ as $y_\eta \cup \{ y_\eta \} \subseteq y_\xi$. So $y'_\xi \cap H = y_\xi$. Otherwise, $y'_\eta\notin H$. In this case, we have $y'_\xi\cap H=(y_\xi\cup y'_\eta \cup \{ y'_\eta \})\cap H= y_\xi \cup y_\eta = y_\xi$, as claimed.

So, since $y_\xi\in X$, $y'_\xi\in X\lift([H_\lambda]^\omega)^{W[I]}$.
Moreover, $y'_\xi$ is closed under $z\mapsto z\cap H$: we proceed by induction. This is clear for $\xi\le\zeta$, since then, $y'_\xi=x'_\xi$. It is also clear for limit $\xi$. So suppose $\xi=\eta +1$. Let $z\in y'_\xi=y_\xi\cup y'_\eta \cup \{ y'_\eta \}$. If $z\in y_\xi$, then $z\cap H=z\in y_\xi \subseteq y'_\xi$. If $z \in y'_\eta$, then, by the induction hypothesis, $z\cap H \in y'_\eta \subseteq y'_\xi$. If $z = y'_\eta$, then $z \cap H = y'_\eta\cap H=y_\eta\in y_\xi \subseteq y'_\xi$.

Hence, $\vy'\in\P_{X'}$, and obviously, $\vy'\le_{\P_{X'}}\vx'$.

The construction of $\vy'$ can be carried out in $W[I]$ whenever $b\in I$, so there is an $\A$-name for $\vy'$, call it $\dot{q}'$. Thus, we have that $b\forces_\A\dot{q}'\le_{\P_{\dot{X}'}}\dot{p}'$, so that $\kla{b,\dot{q}'}\le_{\A*\P_{\dot{X}'}}\kla{a,\dot{p}'}$. Further, $\kla{b,\dot{q}'}\in D'$, as witnessed by $\dot{q}$, since $\kla{b,\dot{q}}\in D$, and for $\xi\le\zeta^*$, $y'_\xi\cap H=y_\xi$. \hfill$\qedhere_{\ref{NC:D'isDense}}$
\end{proof}

Now, let $\Delta$ be a collection of $\omega_1$ dense subsets of $\A*\P_{\dot{X}}$, and let $\Delta'=\{D'\st D\in\Delta\}$. Then $\Delta'$ is a collection of $\omega_1$ dense subsets of $\A*\P_{\dot{X}'}$, so by \ref{NC:FAforA*PS'}, there is a $\Delta'$-generic filter $F'\sub\A*\P_{\dot{X}'}$. Let
\[
F=\{\kla{a,\dot{p}}
\st\exists\dot{p}'\ (\kla{a,\dot{p}'}\in F'
\land a\forces_\A \dot{p}=\kla{a,\dot{p}'}\sqcap H_\mu\}.
\]
It is not hard to see that $F$ is a filter in $\A*\P_{\dot{X}}$. Moreover, it is $\Delta$-generic, because given $D\in\Delta$, let $\kla{a,\dot{p}'}\in D'\cap F'$. By definition of $D'$, let $\dot{p}$ be such that $\kla{a,\dot{p}}\in D$ and $a$ forces that $\dot{p}=\kla{a,\dot{p}'}\sqcap H_\mu$. Then $\kla{a,\dot{p}}\in F$, as witnessed by $\dot{p}'$.
\end{proof}

%

\subsection{When $\NS_{\omega_1}$ is $\omega_1$-dense: separating $\SRP$ from $\uDSR(1,S^{\omega_2}_\omega)$}
\label{subsec:NSomega1-dense}

In this subsection, we will see that boosting the saturation properties of $\NS_{\omega_1}$ to being $\omega_1$-dense (meaning that there is a collection $\mathcal{D}$ of $\omega_1$ many stationary subsets of $\omega_1$ such that for every set $S$ stationary in $\omega_1$, there is a $T\in\mathcal{D}$ such that $T\ohne S$ is not stationary) allows us to construct a model where $\SRP$ holds, yet an even weaker diagonal reflection principle than $\OSR_{\omega_2}$ fails. Let us point out that the $\omega_1$-density of $\NS_{\omega_1}$ contradicts \MM. This can be seen by referring to Larson \cite{Larson:SeparatingSRP}: it is noted there that \MM implies the simultaneous reflection principle for generalized stationary sets denoted $\mathsf{SR}^*_{\omega_1}$ (see \cite[Def.~2.4 and the following remark]{Larson:SeparatingSRP}), and by \cite[Theorem 6.4]{Larson:SeparatingSRP}, $\mathsf{SR}^*_{\omega_1}$ implies that the nonstationary ideal on $\omega_1$ is not $\omega_1$-dense. However, by recent work of Lietz, \SRP is consistent with $\NS_{\omega_1}$ being $\omega_1$-dense, assuming the consistency of sufficiently large cardinals. For our application, we want \SRP to hold indestructibly, and we begin by observing that this indestructibility holds in Lietz's model.

\begin{defn}[{\cite[P.~97 and Def.~2.1, 2.2]{Lietz:NSdense}}]
The principle $\diamondsuit(\omega_1^{{<}\omega})$ states%
\footnote{This is a strengthening due to Lietz of a principle of the same name introduced earlier by Woodin \cite[Def.~6.37]{Woodin:ADforcingAxiomsNonstationaryIdeal}.}
that there exists a function $f:\omega_1\To H_{\omega_1}$ such that for every $\zeta<\omega_1$, $f(\zeta)$ is a filter in $\Col{\omega,\zeta}$ and such that for every $b\in\Col{\omega,\omega_1}$ and every regular $\theta>\omega_1$, letting $\calN = \kla{H_\theta , {\in},f}$, the set
\begin{ea*}
X_{\theta,b,f}=\{M\in[H_\theta]^\omega&\!\st\!& \calN | M \prec \calN ,\ b\in f(M\cap\omega_1)\\ &&\text{and}\ f(M\cap\omega_1)\ \text{is}\ (M,\Col{\omega,M\cap\omega_1})\text{-generic}\}
\end{ea*}%
is stationary in $[H_\theta]^\omega$. In this situation, we say that $f$ witnesses $\diamondsuit(\omega_1^{{<}\omega})$, and we say that a forcing $\P$ is $f$-preserving if after forcing with $\P$, $f$ still witnesses $\diamondsuit(\omega_1^{{<}\omega})$.
\end{defn}

\begin{defn}[{\cite[Def.~7.1]{Lietz:NSdense}}]
\label{def:QM}
$\mathsf{QM}$ is the statement saying that there is a function $f$ which witnesses $\diamondsuit(\omega_1^{{<}\omega})$ such that the forcing axiom $\FA_{\omega_1}(\Gamma)$ for the class $\Gamma$ consisting of all $f$-preserving forcing notions holds.
\end{defn}

\begin{thm}[Lietz]
\label{thm:Lietz}
Assuming the consistency of an inaccessible limit of supercompact cardinals, $\mathsf{QM}$ is consistent. $\mathsf{QM}$ implies $\SRP$ and that $\NS_{\omega_1}$ has density $\omega_1$.	
\end{thm}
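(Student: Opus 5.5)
This theorem is due to Lietz, and the plan is to reconstruct his argument in three parts: consistency, $\SRP$, and density. Throughout I will use the sets $T_b=\{\zeta<\omega_1\st b\in f(\zeta)\}$ for $b\in\Col{\omega,\omega_1}$. Since $X_{\theta,b,f}$ is stationary, each $T_b$ is stationary. Because $\Col{\omega,\omega_1}$ has size $\omega_1$, there are only $\omega_1$ many such sets.

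\emph{Consistency.} Let $\kappa$ be an inaccessible limit of supercompact cardinals. First force $\diamondsuit(\omega_1^{{<}\omega})$ with a witness $f$; a natural choice is countable-support or Jensen-style initial-segment forcing that builds $f$ generically. Then run an iteration of length $\kappa$ of $f$-preserving forcings, with bookkeeping guided by the supercompacts below $\kappa$. The supercompacts are used to catch every potential counterexample to the forcing axiom, in the style of Laver diamonds.

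The key technical input is an iteration theorem: a suitable (revised countable support or nice) iteration of $f$-preserving forcings is again $f$-preserving. I would prove it by a "generic condition" argument. Take $M$ in $X_{\theta,b,f}$ for the iteration. At successor stages, use the $f$-preservation of each iterand to extend generic conditions so that $f(M\cap\omega_1)$ stays $(M[G],\Col{\omega,M\cap\omega_1})$-generic. At limit stages, take the usual fusion, as in the proof that semiproperness is preserved under RCS iterations.

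The reason for using an inaccessible \emph{limit} of supercompacts, rather than a single supercompact, is presumably that $f$-preservation is not directly amenable to a single Laver function. Instead, one reflects each failure of $\FA_{\omega_1}$ for an $f$-preserving $\P$ to some supercompact $\delta<\kappa$, using the reflection of $f$-preservation of the tail. I expect this iteration and preservation step to be the main obstacle.

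\emph{$\mathsf{QM}\Rightarrow\SRP$.} Let $X\sub[H_\kappa]^\omega$ be projective stationary. It suffices to show that $\P_X$ is $f$-preserving, possibly after passing to a refinement of $X$; the stronger forms of $\FA_{\omega_1}$ then yield the required chain of length $\omega_1$, since the dense sets $D_\zeta$ of Fact~\ref{fact:P_SisCountablyDistributiveAndExhaustive} suffice. If $\P_X$ itself is not $f$-preserving, my first attempt would be to intersect $X$ with the sets of $M$ for which $f(M\cap\omega_1)$ is $M$-generic, and to argue that projective stationarity of $X$ makes this intersection still satisfy the hypothesis below. The hypothesis to verify is the following. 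For each $b$ and each stationary $T$, the set of $M\in X$ with $M\cap\omega_1\in T\cap T_b$ and $f(M\cap\omega_1)$ being $M$-generic is stationary. This should follow from projective stationarity of $X$ together with $f$ witnessing $\diamondsuit(\omega_1^{{<}\omega})$, by a Feng--Jech style argument that builds $M$-generic sequences. With that hypothesis, one shows $X_{\theta,b,f}$ remains stationary after $\P_X$, exactly as in the proof of Lemma~\ref{lem:PreservationOfMediatedProjectiveStationarity}.

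\emph{Density.} Given a stationary $S\sub\omega_1$, I want some $b$ with $T_b\ohne S$ nonstationary. Suppose not. Then the forcing $\Col{\omega_1,2^{\omega_1}}$ followed by shooting a club through $S$, or a variant restricted below suitable $b$, should be $f$-preserving. Applying $\FA_{\omega_1}$ to it, together with the genericity of $f$, yields a contradiction with the stationarity of some $T_b\ohne S$. Equivalently, one shows directly that every $f$-preserving forcing preserves each $T_b$, and uses the forcing axiom to make the restriction $\NS_{\omega_1}\rest T_b$ generated by the $T_{b'}$ with $b'\le b$.
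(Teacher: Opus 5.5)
The paper does not prove this theorem. It is quoted from Lietz, and the only related argument is Observation~\ref{obs:QMimpliesSRPFA}, which derives (a strengthening of) $\mathsf{QM}\Rightarrow\SRP$ from $\mathsf{MM}(f)\Rightarrow\mathsf{SPFA}(f)$, Lietz's Lemma~3.69 ($\mathsf{SPFA}(f)\Rightarrow\SRP$), and an $f$-semiproperness argument for $\P_X$. Measured against that, your $\SRP$ step has a genuine gap. You need stationarily many $M\in X$ over which $f(M\cap\omega_1)$ is generic (call such $M$ good), with $M\cap\omega_1$ in prescribed stationary sets. This does not follow from projective stationarity of $X$ plus $\diamondsuit(\omega_1^{{<}\omega})$. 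Projective stationarity constrains only $M\cap\omega_1$, whereas genericity of $f(M\cap\omega_1)$ depends on which dense subsets of $\Col{\omega,\omega_1}$ lie in $M$. Moreover, $\diamondsuit(\omega_1^{{<}\omega})$ only makes good models stationary; it does not put them inside every projective stationary set. If the bad models happened to form a projective stationary $X$, your intersection would be empty and $\P_X$ would destroy $f$. No ``Feng--Jech style'' construction can produce good models inside such an $X$. (Under $\mathsf{QM}$ this cannot happen, but seeing that already uses $\SRP$.)

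What the paper uses instead is an extension property. For $f$-slim $M$ there is $N\supseteq M$ with $N\cap\omega_1=M\cap\omega_1$ and $N\cap H_\theta\in X$, because $X$ is spanning (under $\SRP$, via $\mathsf{RP}$). Then $f(M\cap\omega_1)$ stays generic over $\bar{N}$ by Lietz's Thm.~3.30. Since spanning needs $\SRP$ already, using it in your argument would be circular; $\SRP$ itself comes from Lietz's Lemma~3.69. Even granted good models in $X$, the construction of Lemma~\ref{lem:PreservationOfMediatedProjectiveStationarity} is not enough. The descending sequence must be generic over $\bar{N}[f(\zeta)]$, not just meet the dense sets in $N$, so that by the product lemma $f(\zeta)$ stays generic over the extended model. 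That is exactly the $(M,\P_X,f)$-semigeneric condition built in the paper.

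The other two parts have the same kind of problem. In the consistency part, your successor step assumes that an $f$-preserving iterand supplies, for a \emph{given} good $M$, a condition semigeneric for $M$ that keeps $f(M\cap\omega_1)$ generic over $M[G]$. That is $f$-semiproperness (Lietz's Def.~3.31), not $f$-preservation. $f$-preservation is a global stationarity statement, just as stationary set preservation does not give semiproperness. The fusion you describe needs the local property, and at best aims at $\mathsf{SPFA}(f)$. Reaching all $f$-preserving forcings is a separate theorem: the equivalence of $\mathsf{SPFA}(f)$ and $\mathsf{MM}(f)$ (Lietz's Thm.~3.67(a)), the analogue of $\mathsf{SPFA}\Rightarrow\mathsf{MM}$. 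Your sketch supplies neither, and the role of the inaccessible limit of supercompacts is only guessed at.

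For density, shooting a club through $S$ kills every $T_b$ almost disjoint from $S$. Your hypothesis (all $T_b\setminus S$ stationary) does not exclude such $b$, so that forcing need not be $f$-preserving. The forcing that matches your hypothesis kills $S$, i.e.\ shoots a club through $\omega_1\setminus S$. Its $f$-preservation needs, for each $b$, stationarity of $\{M\in X_{\theta,b,f}\st M\cap\omega_1\notin S\}$ in $[H_\theta]^\omega$. Stationarity of $T_b\setminus S$ alone does not give this. The missing step can be obtained from the axiom itself:
\begin{itemize}
\item Suppose that for some stationary $U\sub T_b$ the set $\{M\in X_{\theta,b,f}\st M\cap\omega_1\in U\}$ were nonstationary. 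Then killing $U$ is $f$-preserving: build the generic sequence over $\bar{M}[f(\zeta)]$ and top it with $\zeta\notin U$.
\item So $\FA_{\omega_1}$ would make $U$ nonstationary, a contradiction.
\item Applying this with $U=T_b\setminus S$, killing $S$ is $f$-preserving under your hypothesis, and $\FA_{\omega_1}$ then makes $S$ nonstationary, again a contradiction.
\end{itemize}
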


\begin{obs}
\label{obs:QMpreservation}
$\mathsf{QM}$, if true, is preserved by $\omega_2$-directed closed forcing. 	
\end{obs}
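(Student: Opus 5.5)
The plan is to show that if $f$ witnesses $\diamondsuit(\omega_1^{{<}\omega})$ and $\FA_{\omega_1}(\Gamma_f)$ holds, where $\Gamma_f$ is the class of $f$-preserving forcings, then after forcing with an $\omega_2$-directed closed $\Q$, the same $f$ still witnesses $\diamondsuit(\omega_1^{{<}\omega})$ and $\FA_{\omega_1}(\Gamma_f)$ holds in $\V^\Q$. I would proceed in two steps.

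\emph{Step 1: $f$ still witnesses $\diamondsuit(\omega_1^{{<}\omega})$ in $\V^\Q$.} Since $\Q$ is countably closed, it adds no countable sets. Hence $H_{\omega_1}$, $\Col{\omega,\zeta}$ for $\zeta<\omega_1$, and the conditions $b\in\Col{\omega,\omega_1}$ are unchanged. I would argue directly that $\Q$ is $f$-preserving. Fix in $\V$ a $\Q$-name $\dot{F}:H_\theta^{{<}\omega}\To H_\theta$, a condition $q$, and $b$. Let $\chi$ be much larger than $\theta$ and apply the stationarity of $X_{\chi,b,f}$ in $\V$ to get a countable $M\prec\kla{H_\chi,{\in},f}$ containing $\dot{F},q,\Q$ with $f(M\cap\omega_1)$ being $(M,\Col{\omega,M\cap\omega_1})$-generic and $b\in f(M\cap\omega_1)$. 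By countable closure, build a descending $M$-generic sequence in $\Q\cap M$ below $q$ and take a lower bound $q^*$. Then $q^*$ forces that $M[\dot{G}]\cap H_\theta^{\V[\dot{G}]}=M\cap H_\theta$ is closed under $\dot{F}$ and is elementary in $\kla{H_\theta,{\in},f}^{\V[G]}$. It still satisfies the genericity clause, because $M\cap\Col{\omega,\omega_1}$ and the relevant dense sets are computed from $M[G]$. I expect this to be the main point of care: the dense subsets of $\Col{\omega,M\cap\omega_1}$ lying in $M[G]$ must already lie in $M$. This holds since $M[G]\cap V=M$ for the master-condition extension and $\Col{\omega,\zeta}$ has only countably many dense sets in $H_{\omega_1}$, so $M[G]\cap H_{\omega_1}=M\cap H_{\omega_1}$.

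\emph{Step 2: the forcing axiom survives.} In $\V[G]$, let $\R$ be $f$-preserving, with a family of $\omega_1$ dense sets. I would mimic Theorem \ref{thm:AbstractPreservation} with $\P=\Q$ and trivial $\T$, which is legitimate since $\Q$ itself is $\omega_2$-directed closed. The composite $\Q*\dot{\R}$ is $f$-preserving in $\V$ by Step 1: $\Q$ preserves $f$, and $\dot{\R}$ is forced to preserve $f$. Hence $\Q*\dot{\R}\in\Gamma_f$ in $\V$, and the same holds for its restrictions below conditions forcing the counterexample. The proof of Theorem \ref{thm:AbstractPreservation} then uses only that the class contains $\Q*\dot{\R}$ and is closed under restrictions. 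It yields a directed set of size $\omega_1$ of first coordinates with a lower bound $r^*\in\Q$, and $r^*$ forces that the derived filter in $\dot{\R}$ meets all the dense sets. This contradicts the assumption of a counterexample. Closure under restrictions for $f$-preserving forcings is clear, since $f$-preservation is a property of all generic extensions.
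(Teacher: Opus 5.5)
Your architecture works, and Step 2 is a legitimate, more self-contained alternative to the paper's route. The paper cites Cox's Theorem 44 and checks the closure properties it needs: closure under restrictions, two-step iterations and forcing equivalence, plus containing the $\omega_2$-directed closed posets. You instead apply the paper's own Theorem \ref{thm:AbstractPreservation} with $\P=\Q_{\le p}$ and trivial $\T$. You correctly observe that its proof only uses that $\Q_{\le p}*\dot{\R}$ lies in the class and that the class is closed under restrictions.

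Step 1 plays the role of the paper's check that $\omega_2$-closed forcing is $f$-preserving. The paper keeps $\bar{X}=(X_{\theta,b,f})^{\V}$ stationary by properness, lifts it to $[H_\theta^{\V[G]}]^\omega$, and checks that the elementary $N$ in the lift lie in $(X_{\theta,b,f})^{\V[G]}$. Your master-condition argument is the same idea done by hand. However, Step 1 as written contains two errors.

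\emph{The set-equality claim is false.} $q^*$ does not force $M[\dot G]\cap H_\theta^{\V[\dot G]}=M\cap H_\theta$. For instance, let $\Q=\Add(\omega_2,1)$ and $\theta\ge\omega_3$. The canonical name for the generic subset of $\omega_2$ lies in $M$, so the generic set lies in $M[G]\cap H_\theta^{\V[G]}$ but not in $\V$. The witness you want is $N=M[G]\cap H_\theta^{\V[G]}$ itself. It is closed under $\dot F^G$ and elementary in $\kla{H_\theta^{\V[G]},{\in},f}$, because $M[G]\prec H_\chi^{\V[G]}$ and $\dot F,\theta,f\in M$. Also $N\cap\omega_1=M\cap\omega_1$, because $M[G]\cap\V=M$.

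\emph{The genericity clause is justified by the wrong fact.} The sets $f(\zeta)$ must meet are the dense $D\sub\Col{\omega,\omega_1}$ with $D\in N$. These have size $\omega_1$, so $M[G]\cap H_{\omega_1}=M\cap H_{\omega_1}$ says nothing about them. Arguing with dense subsets of $\Col{\omega,\zeta}$ in $H_{\omega_1}$ does not help either, since the transitive collapse of $N$ is not that of $M$. What you need is that $\Q$, being $\omega_2$-closed, adds no new subsets of $\omega_1$. Hence $D\in\V\cap M[G]=M$, and $f(\zeta)$ meets $D\cap M=D\cap N$. Countable closure, which is all you invoke in Step 1, does not give $D\in\V$: for example, $\Add(\omega_1,1)$ adds new subsets of $\omega_1$. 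This is exactly where the paper uses the hypothesis (``Since $\P$ is $\omega_2$-closed, $D\in\V$'').

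With these two repairs your proof goes through.
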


\begin{proof}
Cox \cite[Theorem 44]{Cox:FAapproachabilitySSR} states that for any forcing class $\Gamma$ which is closed under restrictions and under 2-step iterations, $\FA_{\omega_1}(\Gamma)$ is preserved by $\omega_2$-directed closed forcing. The class of $f$-preserving forcing notions obviously has these two properties, but actually, the argument of \cite[Theorem 44]{Cox:FAapproachabilitySSR} also needs that the class in question contains every $\omega_2$-directed closed forcing notion, and that it is closed under forcing equivalence. Since it is obvious that the class of $f$-preserving forcing is closed under forcing equivalence, let us quickly check that it contains the $\omega_2$-directed closed forcings. In fact, it contains all $\omega_2$-closed forcings.

To see this, let $f$ witness that $\diamondsuit(\omega_1^{{<}\omega})$ holds, let $\P$ be $\omega_2$-closed, and let $G$ be $\P$-generic over $\V$. Fix a condition $b\in\Col{\omega,\omega_1}$ and a regular cardinal $\theta>\omega_1$ in $\V[G]$. We have to show that $(X_{\theta,b,f})^{\V[G]}$ is stationary in $[H_\theta^{\V[G]}]^\omega$ (in $\V[G]$). But since $f$ is a $\diamondsuit(\omega_1^{{<}\omega})$-sequence, we know that $\bX=(X_{\theta,b,f})^V$ is stationary in $[H_\theta^\V]^\omega$ (in $\V$). Since $\P$ is proper, it preserves this property, so $\bX$ is a stationary subset of $[H_\theta^\V]^\omega$ also in $\V[G]$. So, working in $\V[G]$,
\[X=\bX\lift[H_\theta^{\V[G]}]^\omega=\{N\in[H_\theta^{\V[G]}]^\omega\st N\cap H_\theta^\V\in\bX\}\]
is stationary in $[H_\theta^{\V[G]}]^\omega$. So the set $X'=\{N\in X \st \kla{N,{\in}}\prec \kla{H_\theta^{\V[G]},{\in}}\}$ is stationary, too.

But $X'\sub(X_{\theta,b,f})^{\V[G]}$: let $N\in X'$ and $M=N\cap H_\theta^\V$. Since $M\in\bX$, $f\in M\sub N$. Further, let $\zeta =M\cap\omega_1=N\cap\omega_1$. Since $M\in\bX$, $b\in f(\zeta)=f(N\cap\omega_1)$. Finally, let's check that $f(\zeta)$ is $(N,\Col{\omega,\omega_1})$-generic. Working in $\V[G]$, let $D\sub\Col{\omega,\omega_1}$ be a dense subset, $D\in N$. Since $\P$ is $\omega_2$-closed, $D\in\V$. So actually, $D\in N\cap H_\theta^\V =M$. So, since $f(\zeta)$ is $(M,\Col{\omega,\delta})$-generic, there is a $c\in f(\zeta)\cap D\cap M$. So $c\in f(\zeta)\cap D \cap N$.

Thus, $X'\sub(X_{\theta,b,f})^{\V[G]}$, and hence, the latter is stationary in $\V[G]$.
\end{proof}

\begin{obs}
\label{obs:QMimpliesSRPFA}
$\mathsf{QM}$ implies $\SRP^\FA$.	
\end{obs}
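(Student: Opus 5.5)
We need to show that $\mathsf{QM}$ implies $\SRP^\FA=\FA_{\omega_1}(\SSP^-)$. Fix $f$ witnessing $\diamondsuit(\omega_1^{{<}\omega})$ such that $\FA_{\omega_1}(\Gamma_f)$ holds, where $\Gamma_f$ is the class of $f$-preserving forcings. Then it suffices to show that every forcing in $\SSP^-$ is $f$-preserving, i.e., that whenever $\lambda>\omega_1$ is regular, $X\sub[H_\lambda]^\omega$ is stationary and $\P_X$ is stationary set preserving, then $\P_X$ preserves that $f$ witnesses $\diamondsuit(\omega_1^{{<}\omega})$. Since $\FA_{\omega_1}(\Gamma_f)$ then applies to $\P_X$, we get $\FA_{\omega_1}(\{\P_X\})$ for each such $X$, which is $\SRP^\FA$.

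To verify $f$-preservation, I will use that $\P_X$ is countably distributive (Fact \ref{fact:P_SisCountablyDistributiveAndExhaustive}). So $\P_X$ adds no new countable sets of ground model elements, and in particular no new elements of $H_{\omega_1}$ and no new dense subsets of $\Col{\omega,\zeta}$ for $\zeta<\omega_1$. Let $G$ be $\P_X$-generic, $\theta>\omega_1$ regular in $\V[G]$, and $b\in\Col{\omega,\omega_1}$. I need $(X_{\theta,b,f})^{\V[G]}$ to be stationary in $\V[G]$.

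The plan is a density argument like the one in the proof of Claim \ref{NC:BxiStationary}. Given a $\P_X$-name $\dot{F}$ for a function $H_\theta^{<\omega}\to H_\theta$ and a condition $p$, I choose a large regular $\chi$ and work with the stationary set $X_{\chi,b,f}$ in $\V$, intersected with the lifting of $X$. I take a countable $M\prec\kla{H_\chi,{\in},f}$ containing $p,\dot{F},X,\theta,b$ such that $M\cap H_\lambda\in X$ and $f(M\cap\omega_1)$ is $(M,\Col{\omega,M\cap\omega_1})$-generic and contains $b$. The key stationarity point is that this set is stationary. This follows from projective stationarity-type reasoning: the set $Y=\{M\cap\omega_1\st M\in X_{\chi,b,f}\}$ is stationary, presumably, and since $X$ is projective stationary by Fact \ref{fact:Feng-Jech}, $X_Y$ is stationary. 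That gives stationarity of the pair condition only in a weak sense, though, so here is the main obstacle: the two requirements, $M\cap H_\lambda\in X$ and $M$ being in $X_{\chi,b,f}$, must hold for the same $M$.

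To handle this, my first attempt is to note that $X_{\chi,b,f}$ is stationary in $[H_\chi]^\omega$. Then I would force with $\P_X$ over $\V$ and use that $\P_X$ preserves stationary subsets of $\omega_1$, or more strongly that it is semiproper-like enough, to argue that $X_{\chi,b,f}$ remains stationary after forcing. Alternatively, I could build $(M,\P_X)$-generic sequences for $M$ in $X_{\chi,b,f}\cap X\lift[H_\chi]^\omega$, which is nonempty if $X\lift$ meets it. If the naive intersection fails, I would instead appeal to Lietz's characterization that forcings of the form $\P_X$ with $X$ projective stationary are $f$-preserving, which presumably is established in \cite{Lietz:NSdense} as part of showing $\mathsf{QM}\Rightarrow\SRP$.

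Given such an $M$, I build an $M$-generic descending sequence $p_n$ below $p$ and let $q=\bigcup p_n\verl\kla{M\cap H_\lambda}$. Then $q$ forces that $M[\dot G]\cap H_\theta^{\V}=M\cap H_\theta$ is closed under $\dot F$, and, using the fact that no new dense sets are added, that $M[\dot G]\cap H_\theta^{\V[G]}$ lies in $(X_{\theta,b,f})^{\V[G]}$.
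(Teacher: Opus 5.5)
Your reduction is legitimate in outline. $\mathsf{QM}$ is the forcing axiom for $f$-preserving forcings, so it would suffice to show that each $\P_X$ with $X$ projective stationary is $f$-preserving; the paper instead passes through $\mathsf{SPFA}(f)$ and shows $\P_X$ is $f$-semiproper. But the verification has a genuine gap at exactly the point you flag, and you never close it. Projective stationarity of $X$ only controls $\omega_1$-traces, so stationarity of $X_Y$ and of $X_{\chi,b,f}$ does not give a model lying in both $X\lift[H_\chi]^\omega$ and $X_{\chi,b,f}$. Arguing that $X_{\chi,b,f}$ survives forcing with $\P_X$ is essentially the statement you are trying to prove, and preservation of stationary subsets of $\omega_1$ does not yield it. Deferring to a presumed theorem of Lietz is not an argument either: the paper cites Lietz only for $\mathsf{SPFA}(f)\Rightarrow\SRP$, and the forcing-axiom version is exactly what the paper has to supply. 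The missing idea is that $\mathsf{QM}$ already yields \SRP, hence $\mathsf{RP}$, hence every stationary set preserving forcing is semiproper, hence (Feng--Jech--Zapletal) projective stationary sets are \emph{spanning}. With that, you never need one model in both sets. You start with an $f$-slim $M$ and take $N\supseteq M$ with $N\cap\omega_1=M\cap\omega_1=\zeta$ and $N\cap H_\theta\in X$; by Lietz's Theorem 3.30, $f(\zeta)$ is generic over the collapse $\bN$. You then build a condition that is \emph{semigeneric} for $M$, rather than generic for a single model in $X$.

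Second, even granting a suitable model, your final step fails as justified. Countable distributivity only says that $\P_X$ adds no new countable subsets of $\V$. It does add new subsets of $\omega_1$, for example the generic club $\{M_\xi\cap\omega_1\st\xi<\omega_1\}$. Correspondingly, the collapse $\bM[\bG]$ of $M[G]$ contains subsets of $\zeta$, and hence dense subsets of $\Col{\omega,\zeta}$, that are not in $\bM$. What you need is that $f(\zeta)$ is $\Col{\omega,\zeta}$-generic over $\bM[\bG]$. By the product lemma this means $\bG$ must be generic over $\bM[f(\zeta)]$, and a merely $M$-generic sequence $\langle p_n\rangle$ does not guarantee this. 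The paper handles it by choosing the $\bp_n$ to meet every dense subset of $\sigma^{-1}(\P_X)$ lying in $\bN[f(\zeta)]$. Then $\bG$ and $f(\zeta)$ are mutually generic over $\bN$, so $f(\zeta)$ stays generic over $\bN[\bG]$. The resulting condition $q=\seq{N_\xi}{\xi\le\zeta}$ with $N_\zeta=N\cap H_\theta$ is then $(M,\P_X,f)$-semigeneric.
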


\begin{proof}[Proof sketch. ]
By $\mathsf{QM}$, let $f$ be a $\diamondsuit(\omega_1^{{<}\omega})$ witness such that the forcing axiom for all $f$-preserving forcing notions holds. We will have to use some concepts introduced in \cite{Lietz:NSdense} without explaining their meaning. Thus, by \cite[Lemma 7.5]{Lietz:NSdense}), $\MM(f)$ holds, and this  implies $\mathsf{SPFA}(f)$ (they are actually equivalent, by \cite[Thm.~3.67(a)]{Lietz:NSdense}). Now, Lietz \cite[Lemma 3.69]{Lietz:NSdense} shows that $\mathsf{SPFA}(f)$ implies \SRP. We will argue that actually, $\mathsf{SPFA}(f)$ implies $\SRP^\FA$ (so our observation is actually slightly stronger than stated).

To this end, let $X\sub[H_\theta]^\omega$ be projective stationary. We claim that $\P_X$ is $f$-semiproper (see \cite[Def.~3.31]{Lietz:NSdense}). We already know by \cite[Lemma 3.69]{Lietz:NSdense} that \SRP holds. This implies the reflection principle $\mathsf{RP}$ of \cite[Thm 37.23]{ST3}. This principle $\mathsf{RP}$, in turn, implies the statement $(\dagger)$ that every stationary set preserving forcing is semiproper; see \cite[Ex.~37.13]{ST3}. But $(\dagger)$ implies that being projective stationary is equivalent to being  spanning (\cite[Thm. 4.4]{FengJechZapletal:StructureOfStationarySets}). Thus, our set $X$ is spanning, meaning that for every $\lambda>2^\theta$ and every club $Z\sub[H_\lambda]^\omega$ there is a club $Y\sub[H_\lambda]^\omega$ such that for every $M\in Y$ there is a $N\in Z$ such that $N\cap H_\theta\in X$, $M\sub N$ and $M\cap\omega_1=N\cap\omega_1$. 

To show that $\P_X$ is $f$-semiproper, closely following the proof of \cite[Lemma 3.69]{Lietz:NSdense}, we fix a condition $p\in\P_X$ and some regular $\lambda>2^\theta$, and we let $Z$ be the club of all $M \in [H_\lambda]^\omega$ with $\kla{M,{\in},\Delta \rest M} \prec \kla{H_\lambda,{\in},\Delta}$, where $\Delta$ is a well-ordering of $H_\lambda$. Let $Y$ be a club subset of $[H_\lambda]^\omega$ as in the definition of being spanning. Let $M\in Y$ be $f$-slim, with $\P_X,f,p\in M$. We have to find a $q\le p$ which is $(M,\P_X,f)$-semigeneric.

This can be done as in the proof of \cite[Lemma 3.69]{Lietz:NSdense}: let $N\in Z$ be such that $M\cap\omega_1=N\cap\omega_1$, $M\sub N$ and $N\cap H_\theta\in X$. Letting $\zeta=M\cap\omega_1=N\cap\omega_1$, it follows by \cite[Thm.~3.30]{Lietz:NSdense} that $f(\zeta)$ is $\Col{\omega,\zeta}$-generic over $\bN$, the Mostowski collapse of $N$. Let $\sigma : \bN \To N$ be the inverse of the Mostowski collapsing map. Let $\seq{\bD_n}{n<\omega}$ be an enumeration of all dense subsets of $\sigma^{-1} ( \P_X )$ in $\bN[ f( \zeta ) ]$, and take a decreasing sequence $\seq{\bp_n}{n<\omega}$ in $\sigma^{-1} ( \P_X )$ with $\bp_n \in \bD_n$ for all $n < \omega$. Let $p_n = \sigma ( \bp_n )$ for $n<\omega$. Then $\bigcup_{n<\omega}p_n$ is of the form $\seq{N_\xi}{\xi<\zeta}$, and $N_\zeta:=\bigcup_{\xi<\zeta}N_\xi=N\cap H_\theta\in X$. So $q=\seq{N_\xi}{\xi\le\zeta}$ is a condition in $\P_X$, and it follows as outlined at the end of the proof of  \cite[Lemma 3.69]{Lietz:NSdense} that $q\le p$ is $(M,\P_X,f)$-semigeneric. 

Thus, by $\SPFA(f)$, $\FA_{\omega_1}(\{\P_X\})$ holds.
\end{proof}

Putting together Theorem \ref{thm:Lietz}, Observation \ref{obs:QMpreservation} and Observation \ref{obs:QMimpliesSRPFA}, we obtain the following.

\begin{cor}
\label{cor:AssumptionConsistent}
If it is consistent that there is an inaccessible limit of supercompact cardinals, then it is consistent that the density of $\NS_{\omega_1}$ is $\omega_1$, and that $\SRP^\FA$ holds in $\V$ as well as in every forcing extension of $\V$ by an $\omega_2$-directed closed forcing notion.
\end{cor}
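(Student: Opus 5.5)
The plan is simply to assemble the three preceding results. I will start from a model $\V$ of $\mathsf{QM}$, whose existence is given by Theorem \ref{thm:Lietz} under the assumption that the theory ``there is an inaccessible limit of supercompact cardinals'' is consistent. In $\V$, Theorem \ref{thm:Lietz} also tells us that $\NS_{\omega_1}$ has density $\omega_1$, and Observation \ref{obs:QMimpliesSRPFA} gives $\SRP^\FA$ in $\V$.

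Next, let $\P$ be any $\omega_2$-directed closed forcing and let $G\sub\P$ be generic over $\V$. By Observation \ref{obs:QMpreservation}, $\mathsf{QM}$ still holds in $\V[G]$. Applying Observation \ref{obs:QMimpliesSRPFA} inside $\V[G]$ then yields $\SRP^\FA$ there. Since $G$ was arbitrary, $\SRP^\FA$ holds in every such extension. I would also note, although the statement only asks for density in $\V$, that Theorem \ref{thm:Lietz} applied in $\V[G]$ shows that $\NS_{\omega_1}$ remains $\omega_1$-dense in $\V[G]$ as well.

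There is essentially no obstacle here; the work is already contained in the cited observations. The only point to check is that Observation \ref{obs:QMpreservation} really is a ZFC theorem about every model of $\mathsf{QM}$. Its proof only uses that the class of $f$-preserving forcings is closed under restrictions, two-step iterations and forcing equivalence, and that it contains the $\omega_2$-closed forcings. This is what allows it to be applied in the model $\V$ at hand. Everything else is a direct composition of implications.
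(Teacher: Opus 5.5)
Your proposal is correct and matches the paper's argument: the paper obtains the corollary by combining Theorem~\ref{thm:Lietz}, Observation~\ref{obs:QMpreservation} and Observation~\ref{obs:QMimpliesSRPFA}, applying the last one both in $\V$ and in each $\omega_2$-directed closed extension $\V[G]$, where $\mathsf{QM}$ persists. Your side remark that $\NS_{\omega_1}$ stays $\omega_1$-dense in $\V[G]$ is also right, since such a forcing adds no new subsets of $\omega_1$, though the statement does not require it.
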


Thus, the assumptions of the result Fuchs \& Lambic-Hanson \cite[Cor.~4.7]{Fuchs-LambieHanson:SeparatingDSR} are consistent, assuming an inaccessible limit of supercompact cardinals, allowing us to construct a model of set theory where \SRP holds but $\uDSR(1,S^{\omega_2}_\omega)$ fails. We will provide a proof that corrects a small blunder in the original argument and strengthens the conclusion. 

\begin{thm}
\label{thm:INSdenseuDSRfailsAtOmega2}
If the density of $\NS_{\omega_1}$ is $\omega_1$ and $\SRP^\FA$ holds and continues to hold in any forcing extension by an $\omega_2$-directed closed forcing notion, then there is an $\omega_2$-strategically closed forcing notion which produces forcing extensions where $\SRP^\FA$ holds yet $\uDSR(1,S^{\omega_2}_\omega)$ fails.
\end{thm}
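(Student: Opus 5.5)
Take $\P$ to be the $\omega_2$-strategically closed forcing from \cite[Theorem 3.10]{Fuchs-LambieHanson:SeparatingDSR} (the same one used in the proof of Theorem \ref{thm:EnhancedSRPDoesntImplyuDSR}, now with $\lambda=\omega_2$). It adds, by initial segments, a sequence $\vS=\seq{S_\alpha}{\alpha<\omega_2}$ of pairwise disjoint stationary subsets of $S^{\omega_2}_\omega$ with $S_\alpha\cap(\alpha+1)=\leer$, such that for every $\rho<\omega_2$ of uncountable cofinality the set $\{\alpha<\rho\st S_\alpha\cap\rho\text{ is stationary in }\rho\}$ is bounded in $\rho$. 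So $\uDSR(1,S^{\omega_2}_\omega)$ fails in $W=\V[\vS]$. Moreover, by \cite[Claim 3.13]{Fuchs-LambieHanson:SeparatingDSR}, for each $\alpha$ the iteration $\P*\dot{\T}_{\dot{S}_{{\ge}\alpha}}$ has an $\omega_2$-directed closed dense subset. Since $\P$ is $\omega_2$-strategically closed, it adds no subsets of $\omega_1$, so stationary subsets of $\omega_1$ are unchanged and $\NS_{\omega_1}$ still has density $\omega_1$ in $W$, witnessed by the same family $\mathcal{D}$.

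To get $\SRP^\FA$ in $W$, fix there a projective stationary $X\sub[H_\theta]^\omega$. The case $\theta<\omega_2$ is impossible, and for $\theta=\omega_2$ or larger one can lift $X$ to $[H_\theta]^\omega$ with $\theta\ge\omega_2$ as in the end of the proof of Theorem \ref{thm:EnhancedSRPDoesntImplyuDSR}. I will apply Theorem \ref{thm:AbstractPreservation}, but with $\Gamma$ being the class of $\SSP$ forcings of the form $\P_Y$, augmented appropriately. The cleanest route is not to use $\FA_{\omega_1}(\Gamma)$ for a big class. Instead, I would rerun the proof of Theorem \ref{thm:AbstractPreservation} directly: it only needs that in $\V$ the relevant forcing $\R=\P*(\dot{\P}_{\dot X}\times\dot{\T})$ admits a filter meeting $\omega_1$ dense sets. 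The hypothesis supplies this in the following way. $\SRP^\FA$ holds in $\V^{\U}$ for $\U$ the $\omega_2$-directed closed dense subset of $\P*\dot{\T}$. In that extension, $\P_X$ (computed in $W$) is still $\P_{X}$, since no new countable sets are added, and it is stationary set preserving once $X$ remains projective stationary there. We therefore get a filter in $\V^{\P*\dot\T}$, and a standard pull-back/density argument, as in the claim of Theorem \ref{thm:AbstractPreservation}, produces the desired filter in $W$. This is presumably where the ``small blunder'' gets fixed: the original argument needs the forcing axiom in the extension by the directed closed part, not in $\V$.

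It thus remains to find $\alpha<\omega_2$ such that $\T_{S_{{\ge}\alpha}}$ preserves the projective stationarity of $X$, i.e., by Lemma \ref{lem:WhenT_APreservesGenStationarity} and Lemma \ref{lem:PreservationOfP_SbeingSSP}, such that $X_T\ohne\lifting(S_{{\ge}\alpha},[H_\theta]^\omega)$ is stationary for every stationary $T\sub\omega_1$. For a single stationary $T$, the argument of Claim \ref{NC:ThereAlwaysIsAnAlpha} (normality plus $S_\delta\cap(\delta+1)=\leer$), applied to $X_T$, gives some $\alpha_T$. The obstacle is uniformity over all $T$, since $2^{\omega_1}$ may be $\omega_2$ and we cannot take a supremum. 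Here density is used: every stationary $T$ contains, modulo $\NS$, some $D\in\mathcal{D}$, and $X_D\ohne\lifting(S_{{\ge}\alpha_D},\cdot)$ stationary implies the same for $X_T$, since $X_D\sub X_T$ modulo a club. So $\alpha=\sup_{D\in\mathcal{D}}\alpha_D<\omega_2$ works, the sets $S_{{\ge}\alpha}$ being decreasing in $\alpha$. The delicate step I expect is verifying that the transfer of $\SRP^\FA$ through $\P*\dot\T$ yields a filter for $\P_X$ as computed in $W$. This requires that $\T_{S_{{\ge}\alpha}}\times\P_X$ be $\SSP$ in $W$, which is exactly Lemma \ref{lem:PreservationOfP_SbeingSSP}, and that $\T_{S_{{\ge}\alpha}}$ be countably distributive, which follows from the construction of $\vS$.
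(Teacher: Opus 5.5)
Your proposal is in substance the paper's proof. It uses the same forcing $\P$ from \cite[Theorem 3.10]{Fuchs-LambieHanson:SeparatingDSR} at $\omega_2$, and the same choice of $\alpha_T$ for each single $T$ by the diagonal-intersection argument (the paper cites \cite[Claim 4.5]{Fuchs-LambieHanson:SeparatingDSR} for this). It takes the same supremum $\alpha^*$ over the $\omega_1$-sized dense family, passes to an arbitrary stationary $U$ the same way (via ``$T\ohne U$ nonstationary'' plus a club of elementary submodels), and concludes with Lemma~\ref{lem:PreservationOfP_SbeingSSP}. Your guess about the ``small blunder'' is off, though. What the paper corrects is the original claim that every stationary $U$ literally contains a member of the family, which is impossible. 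Working modulo $\NS_{\omega_1}$, as you do, is exactly the repair.

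The only divergence is the final transfer of the forcing axiom. The paper simply invokes Theorem~\ref{thm:AbstractPreservation}. You instead use the indestructibility hypothesis directly in $W[C]=\V^{\U}$, where $\U$ is the $\omega_2$-directed closed dense subset of $\P*\dot{\T}$, and then pull back. That is a natural way to use the hypothesis as stated, but the step you flag as delicate needs two ingredients you do not supply.

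First, for $\theta>\omega_2$ you have $H_\theta^W\subsetneq H_\theta^{W[C]}$. So $X$ is not a stationary subset of any $[H_\mu]^\omega$ of $W[C]$, and $\SRP^\FA$ there does not apply to $\P_X$ verbatim. You must pass to $X'=\{x\in[H_\theta^{W[C]}]^\omega\st x\cap H_\theta^W\in X\text{ and }z\cap H_\theta^W\in x\text{ for all }z\in x\}$. Its projective stationarity in $W[C]$ comes from Lemma~\ref{lem:WhenT_APreservesGenStationarity} applied to each $X_U$. You then translate dense sets and filters exactly as in the $\mu<\lambda$ case of Theorem~\ref{thm:EnhancedSRPDoesntImplyuDSR}; this works because countable subsets of $H_\theta^W$ lying in $W[C]$ are already in $W$.

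Second, once you have a filter in $W[C]$, no rerun of Theorem~\ref{thm:AbstractPreservation} is needed. Since $\P*\dot{\T}$ has an $\omega_2$-directed closed dense subset, $\T_{S_{{\ge}\alpha^*}}$ adds no $\omega_1$-sequences over $W$. Hence a directed subset of size $\omega_1$ of that filter meeting the given $\omega_1$ dense sets already lies in $W$ and generates the required filter there.
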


\noindent\emph{Note:} As mentioned before, the principles $\OSR_{\omega_2}$, $\uDSR(\omega_1,S^{\omega_2}_\omega)$ and $\DSR(\omega_1,S^{\omega_2}_\omega)$ are equivalent; see Fuchs \& Lambie-Hanson \cite[Lemma 2.4]{Fuchs-LambieHanson:SeparatingDSR}. Furthermore, \cite[Thm.~3.15]{Fuchs-LambieHanson:SeparatingDSR} shows that $\uDSR({<}\omega_1,S^{\omega_2}_\omega)$ does not imply $\sDSR(1,S^{\omega_2}_\omega)$, and \cite[Cor. 3.20]{Fuchs-LambieHanson:SeparatingDSR} shows that $\sDSR({<}\omega,S^{\omega_2}_\omega)$ does not imply $\DSR(1,S^{\omega_2}_\omega)$ (and the latter is a consequence of $\OSR_{\omega_2})$. Hence, this theorem provides a stronger separation result than Theorem \ref{thm:ALaLarson}.

\begin{proof}
By assumption, we may fix an $\omega_1$-sized collection $\calT$ of stationary subsets of $\omega_1$ which is dense in the stationary subsets of $\omega_1$, that is, for every stationary $U\subseteq\omega_1$, there is a $T\in\calT$ such that $T\ohne U$ is nonstationary. In the original proof, it was carelessly stated that an $\omega_1$-sized collection $\calT$ of stationary subsets of $\omega_1$ can be found such that for every stationary $U\sub\omega_1$ there is an $T\in\calT$ such that $T\sub U$, but this is easily seen to be impossible - in fact, it is easy to see that for any collection of $\omega_1$ many unbounded subsets of $\omega_1$, there is a club subset of $\omega_1$ that is not a superset of any set from the given collection. But the proof goes through without difficulty, as we shall see.

Let $\P$ be the $\omega_2$-strategically closed forcing used in the proof of \cite[Theorem 3.10]{Fuchs-LambieHanson:SeparatingDSR} to add a counterexample to $\uDSR(1,S^{\omega_2}_\omega)$, by initial segments. It is the same forcing we already used in the proof of Theorem \ref{thm:EnhancedSRPDoesntImplyuDSR}, but this time at $\omega_2$. It adds a sequence $\seq{S_\alpha}{\alpha<\omega_2}$ of pairwise disjoint stationary subsets of $S^{\omega_2}_\omega$ such that $S_\alpha\cap(\alpha+1)=\leer$ and such that for every $\rho<\omega_2$ of uncountable cofinality, the set $\{\alpha<\rho\st S_\alpha\cap\rho\ \text{is stationary in}\ \rho\}$ is bounded in $\rho$. It remains to show that $\SRP^\FA$ holds in $\V[\vS]$.

Working in $\V[\vec{S}]$, define $S_{{\ge}\alpha}=\bigcup_{\alpha\le\beta<\omega_2}S_\beta$, and let $\T_\alpha$ be the forcing $\T_{S_{{\ge}\alpha}}$ of Definition \ref{def:KillingStationarySets}, shooting a club through the complement of $S_{{\ge}\alpha}$, thus killing the stationarity of $S_{{\ge}\alpha}$. Letting $\dot{\T}_\alpha$ be a canonical $\P$-name for $\T_\alpha$,  \cite[Theorem 3.10, Claim 3.13]{Fuchs-LambieHanson:SeparatingDSR} shows that $\P*\dot{\T}_\alpha$ contains an $\omega_2$-directed closed subset of size $\omega_2$ (using that under our assumptions, $\omega_2^{\omega_1}=\omega_2$.) Moreover, by \cite[Theorem 4.4, Claim 4.5]{Fuchs-LambieHanson:SeparatingDSR}, we have that in $V[\vS]$, for every stationary $X\subseteq [H_\theta]^\omega$, where $\theta\ge\omega_2$, there is an $\alpha<\omega_2$ such that $\T_\alpha$ preserves the stationarity of $X$. Note also that since $\P$ does not add new subsets of $\omega_1$, $\calT$ is still dense in the stationary subsets of $\omega_1$ from the point of view of $\V[\vS]$. 

We use these facts in order to show that $\SRP^\FA$ holds in $\V[\vS]$. To this end, working in $\V[\vS]$, let $\theta\ge\omega_2$ be regular, and let $X\sub[H_\theta]^\omega$ be projective stationary. We will find an $\alpha^*<\omega_2$ such that in $\V[\vec{S}]$, $\T_{\alpha^*}\times\P_X$ is stationary set preserving. By Lemma \ref{lem:WhenT_APreservesGenStationarity} and \ref{lem:PreservationOfP_SbeingSSP}, for this, it suffices to find an $\alpha^*$ such that for every stationary $U\sub\omega_1$, $\T_{\alpha^*}$ preserves the stationarity of $X_U=\{x\in X\st x\cap\omega_1\in U\}$. 
To find such an $\alpha^*$, first, for every stationary $U\sub\omega_1$, $X_U$ is stationary by the projective stationarity of $X$, so by the above property, we can let $\alpha_U<\omega_2$ be such that $\T_{\alpha_U}$ preserves the stationarity of $X_U$, and then set $\alpha^*=\sup_{T\in\calT}\alpha_T<\omega_2$ (since $\card{\calT}=\omega_1$.) To see that $\alpha^*$ is as wished, let $U\sub\omega_1$ be an arbitrary stationary set. Let $T\in\calT$ be such that $T\ohne U$ is not stationary. Since $\alpha_T\le\alpha^*$, $S_{{\ge}\alpha^*}\sub S_{{\ge}\alpha_T}$, and hence, \[X_T\ohne\lifting(S_{{\ge}\alpha_T},[H_\theta]^\omega)\sub X_T\ohne\lifting(S_{{\ge}\alpha^*},[H_\theta]^\omega).\]
Since $\T_{\alpha_T}$ preserves the stationarity of $X_T$, the former set is stationary by Lemma \ref{lem:WhenT_APreservesGenStationarity}, hence so is the latter. Now, since $T\ohne U$ is not stationary in $\omega_1$, pick a club $C\sub\omega_1$ disjoint from $T\ohne U$, so that $T\cap C\sub U$. Let $Z^*$ be the set of all $M\in[H_\theta]^\omega$ with $C\in M$ and $\kla{M,{\in}}\prec\kla{H_\theta,{\in}}$. Then $Z^*$ is a club subset of $[H_\theta]^\omega$, and for all $M\in Z^*$, $M\cap\omega_1\in C$, because $M\cap\omega_1$ is a limit point of $C$. Thus,
\[(X_T\ohne\lifting(S_{{\ge}\alpha^*},[H_\theta]^\omega))\cap Z^*\sub X_U\ohne\lifting(S_{{\ge}\alpha^*},[H_\theta]^\omega),\]
so again, since the former set is stationary, so is the latter. By Lemma \ref{lem:WhenT_APreservesGenStationarity} again, this means that $\T_{\alpha^*}$ preserves the stationarity of $X_U$, and so, $\alpha^*$ is as wished. It follows now from Theorem \ref{thm:AbstractPreservation} that $\FA_{\omega_1}(\{\P_X\})$ holds in $\V[\vec{S}]$. Since $X$ was an arbitrary projective stationary set, this shows that $\SRP^\FA$ holds in $\V[\vec{S}]$, as claimed.
\end{proof}

To emphasize, it is the regular version of $\uDSR(1,S^{\omega_2}_\omega)$ which fails in the $\omega_2$-strategically closed forcing extension of the corollary, not merely its exact version.

\begin{question}
Does $\SRP + $``$\NS_{\omega_1}$ is not $\omega_1$-dense'' imply (exact) $\uDSR(1,S^{\omega_2}_\omega)$?	
\end{question}

\section{Towards separating $\infSC$-\SRP from diagonal reflection principles}

In this section, we will analyze what happens when one tries to imitate the Larson argument to establish limitations to the consequences \infSC-\SRP has in terms of diagonal stationary reflection principles. The main tool in proving that $\SRP^\FA$ is preserved when forcing over a model of \MM to add a counterexample to $\OSR_{\omega_2}$ is the following consequence of Lemma \ref{lem:WhenT_APreservesGenStationarity} and \ref{lem:PreservationOfP_SbeingSSP}

\begin{lem}
\label{lem:whent_apreversesprojstationarity}
Let $\theta\ge\lambda>\omega_1$ be regular cardinals, $S\subseteq \lambda$ a stationary set of ordinals and $X\subseteq[H_\theta]^\omega$ projective stationary, such that $\lambda\setminus S$ is unbounded in $\lambda$ and $\T_S$ is countably distributive. 
Then the following are equivalent:
\begin{enumerate}[label=\textnormal{(\arabic*)}]
\item
\label{item:SminusAisProjStationary}
$X\setminus\lifting(S,[H_\theta]^\omega)$ is projective stationary.
\item
\label{item:ProductIsSSP}
$\T_S\times\P_X$ is stationary set preserving.
\end{enumerate}
\end{lem}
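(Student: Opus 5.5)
We prove the two directions separately. Throughout, write $L=\lifting(S,[H_\theta]^\omega)$, and for stationary $T\sub\omega_1$ use the notation $X_T$ of Definition \ref{def:ProjectiveStationarity}. The first step is a bookkeeping identity: for every $T\sub\omega_1$,
\[(X\ohne L)_T=X_T\ohne L .\]
Hence \ref{item:SminusAisProjStationary} says exactly that $X_T\ohne L$ is stationary for every stationary $T\sub\omega_1$.

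For \ref{item:SminusAisProjStationary}$\Rightarrow$\ref{item:ProductIsSSP}, we apply Lemma \ref{lem:PreservationOfP_SbeingSSP} directly. That lemma was stated for $S\sub S^\lambda_\omega$, but its proof (the special case of Lemma \ref{lem:PreservationOfMediatedProjectiveStationarity} with $\A$ trivial) only uses that $\T_S$ is countably distributive. In particular, Lemma \ref{lem:PreservationOfPotentialStationarity} only needs $\alpha=\sup(x\cap\lambda)\notin S$ and that $\bigcup g$ is cofinal in $\alpha$. So we check that the argument goes through verbatim for an arbitrary stationary $S$ with $\lambda\ohne S$ unbounded and $\T_S$ countably distributive. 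This gives that $\T_S\times\P_X$ is stationary set preserving.

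For \ref{item:ProductIsSSP}$\Rightarrow$\ref{item:SminusAisProjStationary}, fix a stationary $T\sub\omega_1$; we must show $X_T\ohne L$ is stationary. Suppose $G\times H$ is $\T_S\times\P_X$-generic. By hypothesis $T$ remains stationary in $\V[G][H]=\V[G][H]$ with either factor first. Since $\T_S$ is countably distributive, it preserves stationarity of subsets of $\omega_1$ and adds no countable sets, so $[H_\theta]^\omega$ and $\P_X$ are computed the same in $\V[G]$. Viewing the extension as $\V[G]$ followed by $\P_X$, we get that $\P_X$ preserves the stationarity of $T$ over $\V[G]$. By the local form of the Feng–Jech equivalence (the converse direction spelled out in the proof of Observation \ref{obs:MediatedProjectiveStationarity}), $X_T$ is stationary in $[H_\theta^\V]^\omega$ in $\V[G]$. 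That argument goes through because $H_\theta^\V$ is still the underlying set and the generic chain is club in $[H_\theta^\V]^\omega$ by Fact \ref{fact:P_SisCountablyDistributiveAndExhaustive}. Thus $\T_S$ preserves the stationarity of $X_T$. Applying Lemma \ref{lem:WhenT_APreservesGenStationarity} (implication \ref{item:StationarityOfSisPreserved}$\Rightarrow$\ref{item:SminusAisStationary}) to the stationary set $X_T$, we conclude that $X_T\ohne L$ is stationary, as desired.

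The main point needing care is this reverse direction, specifically justifying the Feng–Jech converse in $\V[G]$ for a set $X$ that lives on $H_\theta^\V$ rather than $H_\theta^{\V[G]}$. The concern is whether the club $Z$ disjoint from $X_T$ in $\V[G]$ is met correctly by the generic chain. I would handle this by redoing the short argument from Observation \ref{obs:MediatedProjectiveStationarity} with $H=H_\theta^\V$ in place of $H_\theta$. That argument only uses that the chain $\bigcup H$ is increasing, continuous, and exhausts $H$, so its range is club in $[H]^\omega$ and meets any club $Z\sub[H]^\omega$ from $\V[G]$ in a club. A secondary check is the generality of $S$ in the forward direction, as discussed above.
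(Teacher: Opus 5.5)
Your forward direction matches the paper's, which simply invokes Lemma \ref{lem:PreservationOfP_SbeingSSP}. You note that this lemma is stated for $S\sub S^\lambda_\omega$, but its proof only uses that $\sup(x\cap\lambda)\notin S$, that $\lambda\ohne S$ is unbounded, and that $\T_S$ is countably distributive. That check is correct, and it covers a hypothesis mismatch the paper passes over silently.

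For the converse you take a genuinely different route. The paper argues in a single extension. It takes the $\P_X$-generic chain $\vec{M}$ and the generic club $B\sub\lambda\ohne S$ added by $\T_S$. It then intersects the clubs $\{\xi\st\sup(M_\xi\cap\lambda)\in B\}$ and $\{\xi\st M_\xi\in Z\}$ with the stationary set $\{\xi\st M_\xi\cap\omega_1\in T\}$, where $Z$ is a club witnessing that $X_T\ohne L$ is nonstationary (your $L=\lifting(S,[H_\theta]^\omega)$). You factor this into two known pieces. First, the Feng--Jech converse, run over $\V[G]$ as in Observation \ref{obs:MediatedProjectiveStationarity}, shows that $\T_S$ preserves the stationarity of $X_T$. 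Second, Lemma \ref{lem:WhenT_APreservesGenStationarity}, which is stated for arbitrary stationary $S$ and so applies verbatim, turns this into stationarity of $X_T\ohne L$. Your version is more modular and isolates the intermediate fact that $\T_S$ preserves every $X_T$. The paper's version is self-contained and does not lean on the cited Larson-type lemma, whose proof uses the same generic-club device anyway.

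Two justifications need repair, though neither affects the outcome.

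\emph{Stationarity of $T$ in $\V[G]$.} Countable distributivity alone does not make a forcing preserve stationary subsets of $\omega_1$. For example, shooting a club through a stationary co-stationary subset of $\omega_1$ with countable conditions is countably distributive. You do not need this inference, though: the Feng--Jech converse only uses that $T$ is stationary in $\V[G][H]$, which is exactly hypothesis (2).

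\emph{The chain meeting $Z$.} An increasing, continuous, exhaustive chain does not automatically meet a club $Z\in\V[G]$ on a club of indices. The club $Z$ need not remain club if $\P_X$ adds countable subsets of $H_\theta^\V$ over $\V[G]$. The countable distributivity invoked in Observation \ref{obs:MediatedProjectiveStationarity} would require already knowing that $X$ is stationary in $\V[G]$. To fix this, choose $F:[H_\theta^\V]^{<\omega}\to H_\theta^\V$ in $\V[G]$ such that every countable set closed under $F$ lies in $Z$. Since the product preserves $\omega_1$, club many $M_\xi$ are closed under $F$, and these lie in $Z$ because $M_\xi\in\V$.

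The paper glosses over the same point by asserting without comment that $\T_S\times\P_X$ is countably distributive.
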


\begin{proof}

\ref{item:SminusAisProjStationary}$\implies$\ref{item:ProductIsSSP}: Assumption  \ref{item:SminusAisProjStationary} implies that for every stationary $T\sub\omega_1$, $X_T\ohne\lifting(S,[H_\theta]^\omega)$ is stationary. Thus, by Lemma \ref{lem:PreservationOfP_SbeingSSP}, $\T_S\times\P_X$ is stationary set preserving.

\ref{item:ProductIsSSP}$\implies$\ref{item:SminusAisProjStationary}: Fixing a stationary $T\sub\omega_1$, we must show that $X_T\ohne\lifting(S,[H_\theta]^\omega)$ is stationary. Suppose this is not the case. Then let $Z\sub[H_\theta]^\omega$ be club and disjoint from this set, that is, $X_T\cap Z\sub\lifting(S,[H_\theta]^\omega)$. 

Let $\vM$ be a $\P_X$-generic sequence over $\V$. By Fact \ref{fact:P_SisCountablyDistributiveAndExhaustive}, it is an increasing continuous $\in$-chain of elements of $X$ whose union is $H_\theta^\V$, and in $\V[\vM]$, the cofinality of $\lambda$ is $\omega_1$.
Now let $I$ be $\V[\vM]$-generic for $\T_S$. Let $B=\bigcup I$, so $B$ is the club subset of $\lambda\ohne S$ added by $I$.
Note that since $\P_X\times\bb{T}_S$ is stationary set preserving, $\omega_1^\V$ is still a cardinal in $\V[\vM][I]$, and hence, $\lambda$ still has uncountable cofinality in $\V[\vM][I]$. 

Thus, letting
\[C_0=\{\xi<\omega_1\st\sup(M_\xi\cap\lambda)\in B\},\]
$C_0$ is club in $\omega_1$. Further, define
\[C_1=\{\xi<\omega_1\st M_\xi\in Z\}.\]
Note that $Z$ is club in $[H_\theta^\V]^\omega$, since $\T_S \times \P_X$ is countably distributive. So $C_1$ is club in $\omega_1$ as well. 

By assumption, $\bb{T}_S\times\P_X$ is stationary set preserving, so $T$ is stationary in $\V[I][\vM]$. Hence, the set
\[T'=\{\xi<\omega_1\st M_\xi\cap\omega_1\in T\}\]
is also stationary. 

So let $\xi\in C_0 \cap C_1\cap T'$. Then $M_\xi\in X$, as $\vM$ is $\P_X$-generic, and $M_\xi\cap\omega_1\in T$, as $\xi\in T'$. Since $\xi\in C_1$, $M_\xi\in Z$. Thus, $M_\xi\in X_T\cap Z$. By choice of $Z$, this means that $M_\xi\in\lifting(S,[H_\theta]^\omega)$, that is, $\sup(M_\xi\cap\lambda)\in S$. But $\xi\in C_0$, so $\sup(M_\xi\cap\lambda)\in B$, and $B\cap S=\leer$. This is a contradiction.
\end{proof}

Recall that in Larson's construction of a model in which \SRP holds yet $\OSR_{\omega_2}$ fails, Theorem \ref{thm:AbstractPreservation} was instrumental: starting in a model of $\MM$, we forced with a poset of the form $\P$ to add a generic $G$ which constitutes a counterexample to diagonal reflection, and argued that if $S$ is a projective stationary set in the forcing extension, then we can find a further forcing $\dot{\T}$ (a club shooting forcing of the form $\T_S$) so that $\P*\dot{\T}$ has a dense $\omega_2$-directed closed subset and, letting $\P_X=\dot{\Q}^G$, $\P*(\dot{\Q}\times\dot{\T})$ is stationary set preserving. The theorem then implies that in $\V[G]$, $\FA_{\omega_1}(\{\P_X\})$ still holds. Now, $\P*(\dot{\Q}\times\dot{\T})$ is stationary set preserving iff in $\V[G]$, letting $\T_S=\dot{\T}^G$, $\P_X\times\T_S$ is stationary set preserving, since $\P$ does not add new subsets to $\omega_1$. And to show that $\P_X\times\T_S$ is stationary set preserving in $\V[G]$, we used the fact that this is equivalent to saying that in $\V[G]$, $X\ohne\lifting(S,[H_\theta]^\omega)$ is projective stationary, where $X$ is projective stationary in $[H_\theta]^\omega$ in $\V[G]$. Thus, the previous lemma was crucial. It turns out that this crucial equivalence carries over nicely to the setting of $\infty$-subcomplete forcings.

\begin{thm}
\label{thm:WhatWeGet}
Let $\lambda$ be a regular cardinal greater than $\omega_1$, and let $S\sub\lambda$ be a stationary set such that $\lambda \ohne S$ is unbounded in $\lambda$ and $\T_S$ is countably distributive. Let $\theta\ge\lambda$ be a cardinal, and let $X\sub[H_\theta]^\omega$ be spread out. 
Then the following are equivalent:
\begin{enumerate}[label=\textnormal{(\arabic*)}]
\item
\label{item:SpreadOutOffTheLiftingOfA}
$X\ohne\lifting(S,[H_\theta]^\omega)$ is spread out.	
\item
\label{item:CompositionSC}
$\T_S\times\P_X$ is $\infty$-subcomplete.
\end{enumerate}
\end{thm}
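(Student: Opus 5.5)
Both directions will mirror the proof of Lemma \ref{lem:whent_apreversesprojstationarity}, with ``projective stationary'' replaced by ``spread out'' and ``stationary set preserving'' by ``$\infty$-subcomplete''. The main tool is the characterization that $Y\sub[H_\theta]^\omega$ is spread out iff $\P_Y$ is $\infty$-subcomplete (Fuchs, Theorem 2.24, local in $\theta$). Throughout, $\T_S\times\P_X$ is countably distributive, since $\P_X$ is and $\T_S$ adds no new countable sequences, so the two factors commute.

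\ref{item:SpreadOutOffTheLiftingOfA}$\implies$\ref{item:CompositionSC}: Write $Y=X\ohne\lifting(S,[H_\theta]^\omega)$, which is spread out. Fix a sufficiently large $\theta'$, $\sigma:\bcalN\prec\calN$ with $\bcalN$ countable, transitive and full, $\T_S,\P_X,S,X\in\ran(\sigma)$, a generic $\bar{I}\times\bar{G}$ for $\bar{\T}_S\times\bar{\P}_X$ over $\bcalN$, and a parameter $\bar{a}$. The plan is as follows. First, apply spread-outness of $Y$ (in the weakly spread out form, with the relevant parameters fixed) to $M=\ran(\sigma)$. This yields an isomorphic copy $\pi:\calN|M\To\calN|M'$ fixing the parameters, with $M'\cap H_\theta\in Y$. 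Replace $\sigma$ by $\sigma_0=\pi\circ\sigma$. Set $\alpha=\sup(M'\cap\lambda)\notin S$, $t=\bigcup\sigma_0``\bigcup\bar{I}\cup\{\alpha\}\in\T_S$ (this is closed at $\alpha$ because $\bar{I}$ is generic and $\sup\sigma_0``\bar\lambda=\alpha$), and $p=\sigma_0``\bigcup\bar{G}{}^\frown\kla{M'\cap H_\theta}\in\P_X$. The condition $\kla{t,p}$ then forces that $\sigma_0$ itself satisfies \ref{item:LiftingCondition}, because $\sigma_0``(\bar I\times\bar G)$ lies in the generic. Here we use that $\T_S$ and $\P_X$ are ``determined by their unions'', exactly as in the direct proofs that $\P_Y$ is $\infty$-subcomplete for spread out $Y$. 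Hence no $\sigma'$ in the extension needs to be built beyond $\sigma_0$. The one point to check is that $\sigma_0``\bigcup\bar G$ has union $M'\cap H_\theta$, which follows since $\bar{G}$ is $\bcalN$-generic.

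\ref{item:CompositionSC}$\implies$\ref{item:SpreadOutOffTheLiftingOfA}: This is the harder direction and the main obstacle, since spread-outness asks for a copy $M'$ in $\V$, while subcompleteness only yields $\sigma'$ in $\V[I][G]$. Given $M$ with $\calN|M\prec\calN$ full, let $\sigma:\bcalN\to\calN|M$ be the inverse collapse. Pick $\bar I\times\bar G$ generic over $\bcalN$ (possible since $\bcalN$ is countable). Apply $\infty$-subcompleteness of $\T_S\times\P_X$ to get $\kla{t,p}$ and, in $\V[I][G]$, an embedding $\sigma'$ with $\sigma'``(\bar I\times\bar G)\sub I\times G$ agreeing with $\sigma$ on parameters. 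Then $M''=\ran(\sigma')$ satisfies $M''\cap H_\theta=\sigma'(\bigcup\bigcup\bar G)$, which equals some $M_\xi$ on the generic chain through $X$. Moreover $\sup(M''\cap\lambda)=\sup\sigma'``\bigcup\bar{I}$ is a limit point of the generic club $\bigcup I$, hence not in $S$. So in $\V[I][G]$ the following $\Sigma_1$ statement holds with parameters in $\V$: there exist $M'$ and $\pi:\calN|M\cong\calN|M'$ fixing $a$ with $M'\cap H_\theta\in X\ohne\lifting(S,\cdot)$ and $\calN|M'\prec\calN$.

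The final step is to pull this witness back to $\V$. The plan here is to use the full model/absoluteness argument from the proof of the characterization theorem (Fuchs, Theorem 2.24). The existence of such $M'$, $\pi$ is witnessed by a countable object and is $\Sigma_1$ over $H_{\theta'}$. Since the forcing adds no new countable sets, and by working in $L_\tau[A]$ with a tree of attempts to build $\pi$, an ill-founded-tree argument shows that existence in the extension implies existence in $\V$. If that fails, the fallback is to run the contrapositive as in Lemma \ref{lem:whent_apreversesprojstationarity}: assume no such copy exists in $\V$, encode the failure as a property of $M$ preserved to the extension, and derive a contradiction with the $\sigma'$ obtained there.
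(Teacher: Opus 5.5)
Your outline is the paper's proof. In (1)$\Rightarrow$(2) you use spread-outness of $X\ohne\lifting(S,[H_\theta]^\omega)$ to replace $\sigma$ by a $\sigma_0\in\V$ whose range meets $H_\theta$ in that set. You then cap $\bigcup\sigma_0``\bar I$ with $\alpha=\sup(\ran(\sigma_0)\cap\lambda)$, and cap $\sigma_0``\bigcup\bar G$ with $\ran(\sigma_0)\cap H_\theta$. In (2)$\Rightarrow$(1) you read off $\sup(\ran(\sigma')\cap\lambda)\notin S$ and $\ran(\sigma')\cap H_\theta\in X$ from $\sigma'``(\bar I\times\bar G)\sub I\times G$. (A minor slip: the first coordinate should be $\bigcup\sigma_0``\bar I\cup\{\alpha\}$. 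As written, $\bigcup\sigma_0``\bigcup\bar I$ is just an ordinal, and $\sigma_0(\bar t)\neq\sigma_0``\bar t$ for conditions that are uncountable in $\bcalN$.)

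What needs repair is the end of (2)$\Rightarrow$(1) and the claim it rests on.

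\textbf{The pull-back step.} The step you call the main obstacle is immediate once $\T_S\times\P_X$ is countably distributive, and this is how the paper finishes. The map $\sigma'$ is a countable set of pairs of ground-model objects. So some condition below the one given by subcompleteness decides $\dot{\sigma}'=\check{\sigma}'$ for some $\sigma'\in\V$. The required properties of $\sigma'$ are all absolute: elementarity, $\sigma'(\bar a)=a$, $\ran(\sigma')\cap H_\theta\in X$, and $\sup(\ran(\sigma')\cap\lambda)\notin S$. The tree argument and the fallback can be dropped.

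\textbf{The distributivity claim.} Your reason for countable distributivity is wrong. A product of two countably distributive forcings need not be countably distributive: shooting clubs through a stationary co-stationary subset of $\omega_1$ and through its complement collapses $\omega_1$. Writing the product as $\T_S*\check{\P}_X$, what you need is that $X$ is still stationary after forcing with $\T_S$.
\begin{itemize}
\item Under (1), this follows from Lemma \ref{lem:WhenT_APreservesGenStationarity}, since $X\ohne\lifting(S,[H_\theta]^\omega)$ is stationary. This is the paper's justification. Your (1)$\Rightarrow$(2) construction does not even need it, since $\sigma_0\in\V$.
\item In (2)$\Rightarrow$(1) it has to come from (2) itself. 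For instance: $\infty$-subcomplete forcing adds no reals, so it preserves $\omega_1$. Suppose $X$ were nonstationary in $\V^{\T_S}$, and take a further $\P_X$-generic chain through $X$ with union $H_\theta^{\V}$. Either it has countable length, which makes $H_\theta^{\V}$ countable and collapses $\omega_1$. Or it is club in $[H_\theta^{\V}]^\omega$, so it contains an element of $X$ closed under a function from $\V^{\T_S}$ witnessing that nonstationarity, which is absurd.
\end{itemize}
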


\begin{proof}

\noindent\ref{item:SpreadOutOffTheLiftingOfA}$\implies$\ref{item:CompositionSC}: Suppose not. Let $\sigma:\bcalN\prec \calN=\calL_\tau^A$ be as in Definition \ref{def:(infinity-)subcompleteness}, with $\bG\times\bH$ generic for $\bar{\T}_{\bS}\times\bar{\P}_{\bar{X}}$, the preimage of $\T_S\times\P_X$ under $\sigma$, over $\bcalN$.

Note that since $\T_S$ is countably distributive, it does not add any conditions to $\P_X$ (which consists of countable sequences from $S$), and $\T_S\times\P_X$ is forcing equivalent to $\T_S *\check{\P}_X$. Note also that by Lemma \ref{lem:WhenT_APreservesGenStationarity}, $X$ is stationary in $\V^{\T_S}$. So $\T_S * \check{\P}_X$ is countably distributive.

Fix some $\bar{a}\in\bcalN$. We are tasked with finding a condition $\kla{c,p}\in\T_S\times\P_X$ that forces the existence of a $\sigma':\bcalN\prec \calN$ with $\sigma'(\bar{a})=\sigma(a)$ such that $\sigma'``\bG\times\bar{H}\sub\dot{G}\times\dot{H}$, where $\dot{G}$, $\dot{H}$ are the canonical names for the $\T_S$, $\P_X$-generic filters. Since $\T_S \times \P_X$ is countably distributive, this is equivalent to showing that there is such a $\sigma'$ in $\V$.

Let $K=\bigcup X=H_\theta$.
By \ref{item:SpreadOutOffTheLiftingOfA}, there is an embedding $\sigma':\bcalN\prec \calN$ with $\sigma'(\ba)=\sigma(\ba)$ such that $\ran(\sigma')\cap K\in X$ and $\alpha=\sup(\ran(\sigma')\cap\lambda)\in\lambda\ohne S$. Let $c=(\bigcup\sigma'``\bG)\cup\{\alpha\}$. Then $c\in\T_S$, since $\alpha\notin S$. Let $p=(\bigcup\sigma'``\bar{H})\verl\kla{\sigma'``\bar{K}}$, where $\bar{K}=\sigma^{-1}(K)$ (note that by genericity, $\bar{K}=\bigcup\bigcup\{\ran(s)\st s\in\bar{H}\}$). But $\sigma'``\bar{K}=\ran(\sigma')\cap K \in X$, which means, again, that $p\in\P_X$. But the condition $\kla{c,p}\in\T_S\times\P_X$ is then as desired: if $G\times H$ is a $\T_S\times\P_X$-generic filter containing it, then $\sigma'``\bar{G}\times\bar{H}\sub G\times H$.

\ref{item:CompositionSC}$\implies$\ref{item:SpreadOutOffTheLiftingOfA}: Let $\calN$, $M$, $a$ be as in the definition of being spread out, $\calN=\calL_\tau^A$. Let $\sigma:\bcalN\prec \calN$, $M=\ran(\sigma)$. Let $\ba=\sigma^{-1}(a)$. Also, let $K=H_\theta$. We have to find a $\sigma':\bcalN\prec \calN$ with $\sigma'(\ba)=a$ and $\ran(\sigma')\cap K \in X \ohne \lifting(S,[K]^\omega)$.

Denoting preimages under $\sigma$ by adding bars, let $\bG\times\bH$ be $\bar{\T}_{\bar{S}}\times\bar{\P}_{\bar{X}}$-generic over $\bcalN$. By $\infty$-subcompleteness and countable distributivity of $\T_S \times \P_X$, let $\kla{c,p}\in\T_S\times\P_X$, $\sigma'$ be such that whenever $G\times H$ is $\T_S\times\P_X$-generic over $\V$ with $\kla{c,p}\in G\times H$, it follows that $\sigma'``\bar{G}\times\bar{H}\sub G\times H$. Since $\sigma'``\bG\sub G$ in this situation, it must be that $\tilde{c}=\bigcup\sigma'``\bG$, a set closed below its supremum, but having no maximum, can be extended by a condition in $\T_S$ (even in $G$). The weakest possible condition to do this is $\tilde{c}\cup\{\sup\tilde{c}\}$. But by genericity of $\bar{G}$, it follows that $\alpha:=\sup\tilde{c}=\sup ( \sigma'``\blambda )=\sup ( \lambda\cap\ran(\sigma') )$. So since $\tilde{c}\cup\{\alpha\}$ must be in $\T_S$, it follows that $\alpha\notin S$. Similarly, the sequence $\tilde{s}=\bigcup\sigma'``\bar{H}$ has no last element, but since it is contained in $H$ and has countable length, its extension by one element must be in $H$, and hence in $\P_X$. There is only one possible extension of it, namely it has to be extended by the union of the elements in the sequence. But by genericity of $\bH$, that union must be $\sigma'``\bar{K}$, which is the same as $\ran(\sigma')\cap K$. So since $\tilde{s}\verl(\ran(\sigma')\cap K)$ is a condition in $\P_X$, it follows that $\ran(\sigma')\cap K\in X$. Thus, $\ran(\sigma')\cap K \in X\ohne\lifting(S,[K]^\omega)$. This shows that the latter set is spread out.
\end{proof}

The key to Larson's argument showing that $\SRP$ does not imply $\OSR_{\omega_2}$ was that after adding a counterexample $\omega_2$-sequence $\vS$ of stationary subsets of $S^{\omega_2}_\omega$, it is the case that for any projective stationary set $X$, there is some $\alpha<\omega_2$ such that $X\ohne\lifting(S_\alpha,[\bigcup X]^\omega)$ is projective stationary, which means that $\T_{S_\alpha}\times\P_X$ is stationary set preserving. Similarly, for the argument that $\SRP$ does not imply $\uDSR(1,S^\lambda_\omega)$, where $\lambda>\omega_2$ is regular (or even $\lambda=\omega_2$, using a stronger assumption), the point was that there is an $\alpha<\lambda$ such that $X\ohne\lifting(S_{{\ge}\alpha},[\bigcup X]^\omega)$ is projective stationary, which again means that $\T_{S_{{\ge}\alpha}}\times\P_X$ is stationary set preserving. 

The original argument must break down when forcing over a model of $\infty$-$\SCFA+\CH$, say, because we know that $\infty$-\SC-\SRP $+$ \CH implies $\OSR_{\omega_2}$. But if we were to try to imitate this kind of reasoning in order to separate \infSC-\SRP from $\uDSR(1,S^\lambda_\omega)$ for some $\lambda>\omega_2$, we would be led to argue that after adding $\vS$, in $\V[\vS]$, for any spread out $X$, there is an $\alpha<\lambda$ such that $X\ohne\lifting(S_{{\ge}\alpha},[\bigcup X]^\omega)$ is spread out, which by Theorem \ref{thm:WhatWeGet} means that $\T_{S_{{\ge}\alpha}}\times\P_X$ is \infSC. It should be pointed out here that saying that $\P*(\dot{\Q}\times\dot{\mathbb{T}})$ is $\infSC$ is probably not equivalent to saying that in $\V[\vS]$, $\Q\times\T_S$ is $\infSC$ - the latter is potentially a stronger property. Regardless, the following observation shows some problems with the approach of trying to show that this potentially stronger property holds.

\begin{obs}
Assume \MM, and let $\P$ be the forcing to add a sequence $\seq{S_\alpha}{\alpha<\omega_3}$ such that $\vS$ witnesses the failure of $\uDSR(1,S^{\omega_3}_\omega)$ (see Theorem \ref{thm:EnhancedSRPDoesntImplyuDSR}). Then, in $\V[\vS]$, the following statements are true:
\begin{enumerate}[label=\textnormal{(\arabic*)}]
\item 
\label{item:SRP}
$\SRP^\FA$ holds, hence so does $\infty$-\SC-$\SRP^\FA$. 
\item 
\label{item:SRP-Strategy}
for every set $X$ spread out in $[H_\theta]^\omega$ for some regular $\theta\ge\omega_3$, there is an $\alpha<\omega_3$ such that $X\ohne\lifting(S_{\ge\alpha},[H_\theta]^\omega)$ is projective stationary.
\item
\label{item:infSCSRP-StrategyFails} 
there is a set $X$ spread out in $[H_{\omega_3}]^\omega$ for which there is no $\alpha<\omega_3$ such that $X\ohne\lifting(S_\alpha,[H_{\omega_3}]^\omega)$ is spread out.
\end{enumerate}
\end{obs}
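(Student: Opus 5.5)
For item \ref{item:SRP}, I will read the statement off Theorem \ref{thm:EnhancedSRPDoesntImplyuDSR}, applied with $\lambda=\omega_3$. Its proof shows that in $\V[\vS]$ the axiom $\FA_{\omega_1}(\{\P_X\})$ holds for every projective stationary $X$. To get this, take $\A$ trivial there; then $\A*\P_{\dot X}=\P_X$ is stationary set preserving exactly when $X$ is projective stationary. So $\SRP^\FA$ holds. Every $\infty$-subcomplete forcing preserves stationary subsets of $\omega_1$, so $\infSC^-\subseteq\SSP^-$, and hence $\infSC$-$\SRP^\FA$ holds as well.

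For item \ref{item:SRP-Strategy}, let $X\sub[H_\theta]^\omega$ be spread out. Then $\P_X$ is $\infty$-subcomplete, hence stationary set preserving, so $X$ is projective stationary by Fact \ref{fact:Feng-Jech}. Now I repeat Claims \ref{NC:ThereAlwaysIsAnAlpha}--\ref{NC:SomeThetaPreservesProjectiveStationarity} from the proof of Theorem \ref{thm:EnhancedSRPDoesntImplyuDSR} with $\A$ trivial. For each stationary $T\sub\omega_1$, the set $X_T$ is stationary. Claim \ref{NC:ThereAlwaysIsAnAlpha} then yields some $\alpha_T<\omega_3$ such that $X_T\ohne\lifting(S_{\ge\alpha_T},[H_\theta]^\omega)$ is stationary. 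The cardinal arithmetic of $\MM$ survives the forcing, because $\P$ is $\omega_3$-strategically closed and adds no subsets of $\omega_1$. So $2^{\omega_1}=\omega_2<\omega_3$, and $\alpha=\sup_T\alpha_T<\omega_3$. Since $S_{\ge\alpha}\sub S_{\ge\alpha_T}$, the set $X_T\ohne\lifting(S_{\ge\alpha},[H_\theta]^\omega)$ is stationary for every stationary $T$. This says exactly that $X\ohne\lifting(S_{\ge\alpha},[H_\theta]^\omega)$ is projective stationary.

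For item \ref{item:infSCSRP-StrategyFails}, I will use the sets from Definition \ref{def:MagicalSets}. In $\V[\vS]$ we have $\omega_2^\omega=\omega_2<\omega_3$, so Lemma \ref{lem:MagicalSetsSpreadOutAboveKappa} applies with $\kappa=\omega_2$ and $\lambda=\omega_3$. I will choose a sequence $\vT=\seq{T_\alpha}{\alpha<\omega_3}$ of stationary subsets of $S^{\omega_2}_\omega$ that are pairwise disjoint modulo clubs. Such a sequence exists: since $\diamondsuit(S^{\omega_2}_\omega)$ holds under $2^{\omega_1}=\omega_2$, one can take $T_A=\{\gamma\st A\cap\gamma=D_\gamma\}$ for $\omega_3$ many distinct $A\sub\omega_2$, using $2^{\omega_2}\ge\omega_3$. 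Alternatively, one can use Solovay splitting together with a modification. Let $\vS'$ be a shift of $\vS$, say $S'_\alpha=S_{\alpha+1}$, or a suitable reindexing. The set $X=X_{\vS',\vT}$ is then spread out, and $\bigcup X=H_{\omega_3}$ since $X$ is spread out. Now suppose some $\alpha$ made $X\ohne\lifting(S_\alpha,[H_{\omega_3}]^\omega)$ spread out. I would pick $M$ containing $\alpha$ with $\gamma=\sup(M\cap\omega_2)\in T_\beta$, where $\beta$ is the index for which $S'_\beta=S_\alpha$. This is possible because $T_\beta$ is stationary, and $M$ can be chosen with the relevant parameters in it. Spread-outness above $\omega_2$ is automatic here, since $\pi$ fixes $\omega_2\sub\omega_3$. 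So the isomorphic copy $M'$ produced by the assumption still satisfies $\sup(M'\cap\omega_2)=\gamma\in T_\beta$ and $\beta\in M'$. Membership in $X$ then forces $\sup(M'\cap\omega_3)\in S'_\beta=S_\alpha$, contradicting $M'\cap H_{\omega_3}\notin\lifting(S_\alpha,[H_{\omega_3}]^\omega)$.

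\emph{Main obstacle.} The step I expect to be delicate is guaranteeing that the copy $M'$ agrees with $M$ on $\omega_2$. The definition of spread out only demands an isomorphism $\pi$ fixing a parameter $a$, not one fixing $\omega_2$ pointwise. I would handle this by putting $\vT$ and $\beta$ into the fixed parameter, so that $\pi(\beta)=\beta$ and $\pi(T_\beta)=T_\beta$. I would then argue by elementarity that $\sup(M'\cap\omega_2)\in T_\beta$ is preserved. The reason is that the statement ``$\sup(M\cap\omega_2)\in T_\beta$'' is reflected in $\calN$ through clubs in $M$, just as the uniqueness argument in Case 1 of Lemma \ref{lem:MagicalSetsSpreadOutAboveKappa} shows. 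If this fails, the fallback is to pick $M$ whose collapse determines $\gamma$ through the stationarity of $T_\beta$ intersected with every club in $M$.
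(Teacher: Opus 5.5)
Your arguments for (1) and (2) are fine and agree with the paper, which reads both off Theorem~\ref{thm:EnhancedSRPDoesntImplyuDSR} and its proof. Building $\vT$ from $\diamondsuit(S^{\omega_2}_\omega)$, which follows from $2^{\omega_1}=\omega_2$, is a valid concrete substitute for the paper's appeal to the non-$\omega_3$-saturation of $\NS_{\omega_2}$ on $S^{\omega_2}_\omega$.

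The genuine gap is in (3), exactly at the step you flag. Suppose $X^-=X\ohne\lifting(S_\alpha,[H_{\omega_3}]^\omega)$ were spread out. Then you only get an isomorphism $\pi:\calN|M\To\calN|M'$ fixing finitely many parameters. That $\pi(\omega_2)=\omega_2$ does not give $\pi\rest(M\cap\omega_2)=\id$, so calling this ``automatic'' is unjustified. Your proposed repair also fails. Whether $\sup(M\cap\omega_2)$ lies in the stationary, co-stationary set $T_\beta$ is not expressible in $\calN|M$. The club argument from Case 1 of Lemma~\ref{lem:MagicalSetsSpreadOutAboveKappa} only shows the index is unique once $\gamma\in T_\beta$ is known; it does not transfer that membership to an isomorphic copy. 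Indeed, under \CH, Lemma~\ref{lem:OneStep} with $\kappa=\omega_2$ would let you move $\sup(M'\cap\omega_2)$ anywhere in an $\omega$-club, so the claim is false in general.

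The missing idea is cardinal arithmetic. In $\V[\vS]$ we have $2^\omega=\omega_2$, since \MM gives it and $\P$ adds no reals. You note $\omega_2^\omega=\omega_2$, but this is the fact that matters. Put a bijection $g:\omega_2\To\power(\omega)$ among the parameters fixed by $\pi$. Since $\pi$ is the identity on subsets of $\omega$, for every $\xi\in M\cap\omega_2$ we get $g(\pi(\xi))=\pi(g(\xi))=g(\xi)$, so $\pi(\xi)=\xi$. Hence $M'\cap\omega_2=M\cap\omega_2$. This is the coding argument behind Fact~\ref{fact:CHneededForLocalCorrespondence}, and it is what the paper uses. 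With it, $\sup(M'\cap\omega_2)=\sup(M\cap\omega_2)\in T_\beta$, and your contradiction goes through.

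A second, easily repaired error: do not shift the sequence. With $S'_\beta=S_{\beta+1}$, neither $S_0$ nor any $S_\delta$ with $\delta$ limit is of the form $S'_\beta$. Since the $S_\alpha$ are pairwise disjoint, every $M\in X_{\vS',\vT}$ then has $\sup(M\cap\omega_3)\notin S_\delta$ for such $\delta$. So $X\ohne\lifting(S_\delta,[H_{\omega_3}]^\omega)=X$ is spread out, and your $X$ is not a witness. Use $\vS$ itself, as the paper does, or any bijective reindexing. The default clause of Definition~\ref{def:MagicalSets} causes no trouble: for $\alpha=0$, just choose $M$ with $\sup(M\cap\omega_2)\in T_0$.
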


\begin{proof} \ref{item:SRP} follows from the statement of Theorem \ref{thm:EnhancedSRPDoesntImplyuDSR}, and \ref{item:SRP-Strategy} is what the proof of that Theorem shows.

To see \ref{item:infSCSRP-StrategyFails}, working in $\V[\vS]$, fix a sequence $\vT = \seq{T_\alpha}{\alpha<\omega_3}$ of stationary subsets of $S^{\omega_2}_\omega$ such that if $\alpha<\beta<\omega_3$, then $T_\alpha\cap T_\beta$ is not stationary in $\omega_2$. This is possible, because the nonstationary ideal on $\omega_2$, restricted to $S^{\omega_2}_\omega$, is not $\omega_3$-saturated. As in Definition \ref{def:MagicalSets}, let $X\sub[H_{\omega_3}]^\omega$  be defined to consist of those $M$ such that $\kla{M,{\in}}\prec\kla{H_{\omega_3},{\in}}$ and such that either there is no $\alpha\in M\cap\omega_3$ with $\sup(M\cap\omega_2)\in T_\alpha$, or there is a (unique) such $\alpha$, and in this case, $\sup(M\cap\omega_3)\in S_\alpha$. It was shown in Lemma \ref{lem:MagicalSetsSpreadOutAboveKappa} that $X$ is spread out (above $\omega_2$); note here that $\omega_2=2^\omega$, so $(\omega_2)^\omega=\omega_2<\omega_3$.

To prove the crucial property of $X$, fix $\alpha<\omega_3$. We show that $X^-=X\ohne\lifting(S_{\alpha},[H_{\omega_3}]^\omega)$ is not spread out. Namely, let $\theta$ be some sufficiently large cardinal, and let $H_\theta\sub \calN=\calL_\tau^A\models\ZFCm$. We can always find a countable $M$ such that $\calN | M \prec \calN$ is full, $\alpha\in M$ and $\sup(M\cap\omega_2)\in T_\alpha$. Assume $X^-$ were spread out. Then there would be an $M'$ such that $\calN | M' \prec \calN$, and there is an isomorphism $\pi:\calN|M \To \calN|M'$ that fixes $\vS , \vT$ and $\alpha$, and $M'\cap H_{\omega_3}\in X^-$. But since $2^\omega=\omega_2$, $M\cap\omega_2=M'\cap\omega_2$, as $\power(\omega)\cap M=\power(\omega)\cap M'$ and every ordinal less than $\omega_2$ can be coded by a real; see \cite[Obs.~2.32]{Fuchs:CanonicalFragmentsOfSRP}. Since $\alpha$ is fixed by $\pi$, so $\alpha\in M'$, and we have $\sup(M'\cap\omega_2)=\sup(M\cap\omega_2)\in T_\alpha$. Then, since $M'\cap H_{\omega_3}\in X$, we have that $\sup(M'\cap\omega_3)\in S_\alpha$, so $M'\notin X^-$, a contradiction.
\end{proof}

This shows two problems: first, in $\V[\vS]$, $\infty$-\SC-\SRP holds, but the strategy \ref{item:SRP-Strategy} to prove this that works for \SRP does not work to show that $\infty$-\SC-\SRP holds - this is the import of \ref{item:infSCSRP-StrategyFails}. It also shows that projective stationary sets are more manageable than spread out ones. It must be added, though, that the situation would be different when forcing over a model of $\infSCFA+\CH$; in particular, the argument for (3) would not work.

So even though Larson's characterization of the preservation of projective stationarity carries over nicely to the context of spread out sets, we have not been able so far to separate $\infSC$-$\SRP+\CH$ from $\DSR(\omega_1,S^\lambda_\omega)$, for any regular $\lambda>\omega_1$. We have seen that this is impossible for $\lambda=\omega_2$. For $\lambda>\omega_2$, another way to appreciate the difficulty is as follows: if one starts in a model of \MM and adds a counterexample to $\uDSR(1,S^\lambda_\omega)$ as we did, then in the forcing extension, the enhanced version of $\SRP^\FA$ holds. The situation must change when starting in a model of $\infty$-$\SCFA + \diamondsuit$, say, because after adding the counterexample to $\uDSR(1,S^\lambda_\omega)$, it cannot be that the enhanced version of $\infSC$-$\SRP^\FA$ holds, as $\diamondsuit$ still holds, and these two principles taken together would imply $\DSR(\omega_1,S^\lambda_\omega)$. So either, adding the counterexample does not preserve $\infty$-\SC-\SRP (so the separation fails), contrary to the situation with \MM/\SRP, or it is preserved, but the enhanced version isn't, again, contrary to the situation with \MM/\SRP (so the separation works).

\begin{question}
Does $\infSC$-$\SRP+\CH$ imply (strongly exact) $\DSR(\lambda,S^\lambda_\omega)$ for any/all regular $\lambda>\omega_2$? Does it help to assume $\diamondsuit$? Or $\diamondsuit^+$?	
\end{question}

More generally, the following is still open.

\begin{question}
Does $\infSC$-$\SRP + \CH$ imply \infSCFA? 	
\end{question}

See \cite{Fuchs:CanonicalFragmentsOfSRP} for partial answers in the negative.

\bibliography{LittleLibrary}
\bibliographystyle{plain}
\end{document}